\newtheorem{thm}{Theorem}[section]
\newtheorem{co}[thm]{Corollary}
\newtheorem{lem}[thm]{Lemma}
\newtheorem{pr}[thm]{Proposition}
\theoremstyle{definition}
\newtheorem{re}[thm]{Remark}
\newtheorem{ex}[thm]{Example}
\newtheorem{prob}[thm]{Problem}
\crefname{section}{§}{§§}
\Crefname{section}{§}{§§}
\newcommand{\ci}{\mathbf{1}}
\newcommand{\bs}{\backslash}
\newcommand{\ld}{\lambda}
\newcommand{\Ld}{\Lambda}
\newcommand{\del}{\delta}
\newcommand{\ve}{\varepsilon}
\newcommand{\si}{\sigma}
\newcommand{\al}{\alpha}
\newcommand{\cd}{\cdot}
\newcommand{\be}{\beta}
\newcommand{\la}[2]{\left\langle #1, #2 \right\rangle}
\newcommand{\od}{\odot}
\newcommand{\cch}{\overline{\text{conv}}}
\newcommand{\supp}{\text{supp}}
\newcommand{\cc}{C_c}
\newcommand{\mc}{\mathcal{M}(G)}
\newcommand{\lc}{\mathcal{L}_c(G)}
\newcommand{\lgg}{\mathcal{L}(G)}
\newcommand{\msc}{\mathcal{M}_{\si}(G)}
\newcommand{\mw}{\mathcal{M}_{w}(G)}
\begin{document}
		
		{ \title{\large Abstract harmonic analysis on locally compact right topological groups}

			\author{\normalsize Prachi Loliencar\\
				\normalsize Department of Mathematics and Statistics\\
				\normalsize University of Alberta\\
				\normalsize				Edmonton, Alberta\\
				\normalsize				Canada T6G 2G1\\
				\texttt{\normalsize lolienca@ualberta.ca}
				\\
			}
			\date{}
			\maketitle
		}
		
		\begin{abstract}
		Analytic properties of right topological groups have been extensively studied in the compact admissible case (i.e when the group has a dense topological center). This was inspired by the existence of a Haar measure on such groups. In this paper, we broaden the scope of this work. We give (similar) sufficient conditions for the existence of a Haar measure on locally compact right topological groups and generalize analytic theory to this setting. We then define new measure algebra analogues in the compact setting and use these to completely characterize the existence of a Haar measure, producing sufficient conditions that do not rely on admissibility. 
			\end{abstract}
		
		{\bf Keywords:} abstract harmonic analysis, right topological groups, haar measure, flows, topological dynamics, topological groups\\
		
		\bigskip

		A right topological group is a group equipped with a topology, $(G,\tau)$, satisfying continuity of right multiplication, i.e the maps $G\to G$, $g\mapsto gx$ are continuous for all $x\in G$. The topological center of such a group, denoted by $\Ld(G)$, refers then to those elements $x\in G$ for which left multiplication i.e. the map $g\mapsto xg$ is also continuous.  A right topological group is said to be admissible when its topological center is dense. Interest in these groups initially arose in topological dynamics where Ellis gave a beautiful theorem proving that compact Hausdorff admissible right topological (CHART) groups arise naturally from distal flows \cite{ellis}. However, this discovery has also drawn an interest in these groups from the perspective of abstract harmonic analysis.
		
		 Abstract harmonic analysis on locally compact topological groups heavily relies on the existence of a Haar measure. A Borel measure $\mu$ on a right topological group $G$ is said to be right (resp. left) invariant if $\mu(Eg)= \mu(E)$ ($\mu(gE)=\mu(E)$) for all $g\in G$ (resp. $g\in \Ld(G)$) and for all Borel subsets $E\subset G$. Such a measure is said to be a (right) Haar measure if it is a Radon measure. When $G$ is compact, it is  additionally assumed to be a probability measure. In \cite{milnespym}, Pym and Milnes proved the existence of a unique Haar measure on CHART groups, or more generally, on groups having a strong normal system of subgroups (for definition see \Cref{strongnorm}). Following this, Lau and Loy conducted analysis on these groups \cite{lauloy1}\cite{lauloy2} and defined measure and Fourier algebra analogues. All of this work builds on the fundamental algebro-topological theory on these groups developed by Namioka in \cite{namioka}.
		
		In this paper, our first goal is to generalize this existing theory to locally compact right topological groups.  In particular, we often deal with $\si$-locally compact right topological groups for added structure. Secondly, we define new measure algebra analogues and characterize the existence of a Haar measure in terms of their properties in the compact setting. 
		
		 In \Cref{preliminaries}, we review the existing theory on right topological groups and introduce some notation. Then, in \Cref{haarsec}, we prove the existence of a Haar measure on locally compact right topological groups possessing a compact strong normal system of subgroups. Further, in \Cref{func}, we introduce the $\si\si$-topology and generalize some of the results in \cite{lauloy1}, describing properties of various function algebras, including almost periodic functions and the Fourier and Fourier-Stieltjes algebras. \Cref{measuresec} deals with measure algebra analogues on right topological groups. Here we discuss how some of these measure algebras are fundamental in characterizing the existence of a Haar measure on compact right topological groups. In the process, we provide new sufficient conditions for the existence of a Haar measure on a compact group that need not be admissible. Lastly, we conclude in \Cref{openprob} with some examples and open problems left to be worked on.
		
		\section{Preliminaries}\label{preliminaries}
		
		A group with topology $(G,\tau)$ is said to be {\it semitopological} if its multiplication map $m: G\times G\to G$, $(x,y)\mapsto x^{-1}y$ is separately continuous. It is said to be {\it topological} if the multiplication $m$ is jointly continuous and additionally, the inverse map $G\to G$, $x\mapsto x^{-1}$ is also continuous. Throughout, we denote by $R_g$, $L_g$ the right and left translation maps on $G$ for each $g\in G$. For any function $f$ on $G$, we also use $R_gf$ to denote the right translate of $f$ by $g\in G$.
		
		 Ellis proved the following famous theorem \cite{ellis1};
		
		\begin{thm}[Ellis] Every locally compact Hausdorff semitopological group is topological. \end{thm}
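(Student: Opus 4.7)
The plan is to split the statement into two independent assertions: joint continuity of multiplication and continuity of inversion. Throughout, I would exploit the fact that a locally compact Hausdorff space is Baire, which is the basic topological input that makes Ellis's theorem possible. Also, in a semitopological group each $L_g$ and $R_g$ is continuous and its set-theoretic inverse $L_{g^{-1}}$, $R_{g^{-1}}$ is continuous as well, so left and right translations are already homeomorphisms; this homogeneity lets me transfer any continuity property proved near one point to every point.

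For joint continuity of multiplication I would first prove it at the identity $(e,e)$. Fix a compact neighborhood $K$ of $e$ and consider the restricted multiplication $m\colon G\times K\to G$. This map is separately continuous, and $K$ is compact Hausdorff while $G$ is Baire, so a Namioka-type theorem provides a dense $G_\delta$ set of points of joint continuity in $G\times K$. Pick any such point $(a,b)$; composing with the homeomorphisms $L_{a^{-1}}$ on the first coordinate, $R_{b^{-1}}$ on the second, and $L_{a^{-1}}R_{b^{-1}}$ on the image, I transport joint continuity to $(e,e)$. For a general point $(x,y)$, given nets $x_{\alpha}\to x$ and $y_{\alpha}\to y$, I would write
\[
x_{\alpha}y_{\alpha}=x\cdot\bigl((x^{-1}x_{\alpha})(y_{\alpha}y^{-1})\bigr)\cdot y,
\]
observe that $x^{-1}x_{\alpha}\to e$ and $y_{\alpha}y^{-1}\to e$ by separate continuity, apply joint continuity at $(e,e)$ to the middle factor, and finish with continuity of $L_x$ and $R_y$.

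For continuity of inversion, suppose $x_{\alpha}\to x$ and consider a subnet of $(x_{\alpha}^{-1})$. The easy case is when such a subnet converges to some $y\in G$: joint continuity of multiplication (just established) forces $e=x_{\alpha}x_{\alpha}^{-1}\to xy$, so $y=x^{-1}$, giving the desired convergence along that subnet. The hard case, and the main obstacle, is ruling out that every subnet of $(x_{\alpha}^{-1})$ eventually leaves every compact set. To exclude this I would again use Baire category together with local compactness, working inside a compact neighborhood of $e$ and using the already-established joint continuity to show that inversion sends a suitable neighborhood of $e$ into a compact set. This relative compactness step is the delicate heart of Ellis's original argument; once in hand, the two cases together yield continuity of inversion and complete the proof.
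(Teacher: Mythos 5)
The paper itself does not prove this statement -- it is Ellis's classical theorem, quoted with a citation -- so there is no internal proof to compare against; judged on its own terms, your outline has two genuine gaps. The first is the opening move: you apply ``a Namioka-type theorem'' to the separately continuous map $m\colon G\times K\to G$ to get a dense $G_\delta$ set of joint-continuity points. Namioka-type theorems of this shape require the \emph{target} to be metrizable (or pseudometrizable), and $G$ is not assumed metrizable. The hypothesis is not cosmetic: for each $p\in[0,1]^2$ let $g_p\colon[0,1]^2\to[0,1]$ be separately continuous and jointly discontinuous exactly at $p$ (a translated, rescaled $2xy/(x^2+y^2)$ example); the diagonal map into the compact cube $[0,1]^{[0,1]^2}$ is then separately continuous on (Baire)$\times$(compact) with \emph{no} point of joint continuity. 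So the claimed dense $G_\delta$ in $G\times K$ does not follow from any off-the-shelf result when $G$ is non-metrizable. Nor can you simply compose with functions $f\in C_0(G)$ to reduce to a metric target: each $f$ gives its own dense $G_\delta$ of good points, and joint continuity of $m$ at a single point needs one point common to an uncountable family, which Baire category does not supply. Bridging this is exactly where the known proofs (Ellis's original argument, Lawson's, or the treatment via the group structure and translates of a single well-chosen $f$) do real work; with only one $f$ in hand, the translation trick you describe yields convergence of $x_\alpha y_\alpha$ into sets of the form $S^{-1}S$ with $S$ small, which is convergence in something like the paper's $\si$-topology rather than in $\tau$, and closing that gap is the substance of the theorem. (Your transfer steps themselves -- moving a joint-continuity point to $(e,e)$ by $L_{a^{-1}},R_{b^{-1}}$, and then to arbitrary $(x,y)$ by the identity $x_\alpha y_\alpha=x\bigl((x^{-1}x_\alpha)(y_\alpha y^{-1})\bigr)y$ -- are fine.)

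The second gap is the inversion step: you correctly isolate the hard case (a net $x_\alpha\to x$ whose inverses escape every compact set) and then state that you ``would again use Baire category together with local compactness'' to show inversion maps a suitable neighbourhood of $e$ into a relatively compact set. That is precisely the nontrivial half of the passage from a locally compact paratopological group to a topological group, and no argument is actually given; as written it is a promissory note, not a proof. So the overall architecture (joint continuity first, then inversion via relative compactness plus the easy subnet argument) matches the standard strategy, but both load-bearing steps -- the existence of a joint-continuity point for a non-metrizable target, and the relative-compactness claim for inverses -- are asserted rather than established.
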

		
		As a result, any interest in generalizing theory of locally compact topological groups naturally shifts to groups with one-sided continuity of multiplication.  Namioka later gave a more general result on separate and joint continuity \cite{namioka2},
		
		\begin{thm}[Namioka] \label{separatecont} Let $X$ be locally compact Hausdorff space and $(G,\tau)$ be a topological group acting on $X$. If the following hold:
			\begin{itemize}
				\item $G$ is locally compact
				\item $G\times X\to X$, $(g,x)\to gx$ is separately continuous
				\item $G$ is right topological		
			\end{itemize}
			Then, $G\times X \to X$ is jointly continuous.
		\end{thm}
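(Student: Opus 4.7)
My strategy is two-step: first invoke a Namioka-style joint continuity theorem to produce a dense set of joint continuity points in $G$, then transport these to all of $G\times X$ using right-continuity of multiplication.

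For the first step, the action $\phi:G\times X\to X$, $(g,x)\mapsto gx$, is separately continuous by hypothesis. Since $G$ is locally compact Hausdorff, it is \v{C}ech-complete and hence a Baire space of the type Namioka's 1974 joint continuity theorem requires. To accommodate the (possibly non-metrizable) target $X$, I would localize, restricting to a relatively compact neighborhood of the point where I want joint continuity, and then apply a version of Namioka's theorem with completely regular codomain (or, equivalently, test via compositions with a point-separating family of continuous real-valued functions). The output is a dense $G_\delta$ set $A\subseteq G$ such that $\phi$ is jointly continuous at every point of $A\times X$, i.e.\ joint continuity holds on entire fibers $\{g_0\}\times X$ for $g_0\in A$.

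For the second step, fix $(g_1,x_1)\in G\times X$, choose any $g_0\in A$, and set $h:=g_0^{-1}g_1$ so that $g_1=g_0h$. For any net $(g_\alpha,x_\alpha)\to(g_1,x_1)$, right-continuity of multiplication by $h^{-1}$ (the only place the right topological hypothesis enters) gives $g_\alpha h^{-1}\to g_0$, while separate continuity in the $X$-variable gives $hx_\alpha\to hx_1$. Since joint continuity holds at $(g_0,hx_1)\in A\times X$, associativity of the action yields
\[
g_\alpha x_\alpha=(g_\alpha h^{-1})(hx_\alpha)\longrightarrow g_0(hx_1)=(g_0h)x_1=g_1x_1,
\]
which is precisely joint continuity of $\phi$ at $(g_1,x_1)$.

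The main obstacle is Step 1: upgrading separate continuity to joint continuity on a whole fiber $\{g_0\}\times X$, rather than at a single point, is exactly the content of Namioka's theorem, and the non-metrizability of the target $X$ is the principal technical subtlety, handled by localization and by exploiting the complete regularity of $X$. Step 2 is then essentially free, and the fact that it never requires left continuity in $G$ is precisely why the right topological hypothesis alone suffices.
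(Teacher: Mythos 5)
A preliminary remark: the paper offers no proof of this statement at all — it is quoted from Namioka's paper on separate and joint continuity — so your proposal can only be judged on its own terms. Its skeleton (produce a dense $G_\delta$ set $A\subseteq G$ of full-fiber joint continuity points, then move an arbitrary point onto such a fiber by translation) is indeed the standard Ellis--Namioka route, and your Step 2 is correct as written: the identity $g_\alpha x_\alpha=(g_\alpha h^{-1})(hx_\alpha)$, continuity of the right translation by $h^{-1}$ in $G$, and continuity of $x\mapsto hx$ are all that is used there.

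The genuine gap is in Step 1. Namioka's theorem needs the second factor to be locally compact \emph{and} $\sigma$-compact and the codomain to be (pseudo-)metric; here both are $X$, which is only locally compact Hausdorff. Neither of your proposed fixes closes this as stated. Testing against a point-separating family in $C_b(X)$ produces, for each test function, its own dense $G_\delta$ subset of $G$, and you would have to intersect uncountably many of them to get a single set $A$ — which is not legitimate. Localizing to a compact neighbourhood $K\subseteq X$ makes the dense $G_\delta$ set depend on $K$, while the point $(g_0,hx_1)=(g_0,g_0^{-1}g_1x_1)$ at which you actually need joint continuity depends on the $g_0$ you extract from that very set, so the choice is circular. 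Both defects are repairable, but by reorganizing the argument rather than by the devices you sketch: for each fixed $\varphi\in C_c(X)$ apply Namioka's theorem to the scalar-valued separately continuous map $(g,x)\mapsto\varphi(gx)$ on $G\times K$, with $K$ a compact neighbourhood of $x_1$; run your translation step for this scalar map, choosing $g_0$ in the dense set close enough to $g_1$ that $g_0^{-1}g_1x_1\in\operatorname{int}K$ (this uses continuity of $g\mapsto g^{-1}g_1$, hence that the acting group is topological, as in the statement and in the paper's application where the acting group is $L/N(L)$); conclude that $(g,x)\mapsto\varphi(gx)$ is continuous at $(g_1,x_1)$ for every such $\varphi$, and then an Urysohn function supported near $g_1x_1$ upgrades this to joint continuity of the action at $(g_1,x_1)$. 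Note that this repair uses more than right-continuity of multiplication in $G$, so your closing claim that the right topological hypothesis alone suffices is not established by your argument.
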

		
		For any topological space $X$, we shall denote by $C_b(X)$, the continuous bounded complex-valued functions on $X$, equipped with the usual supremum norm. Further, $C_c(X)$ and $C_0(X)$ will refer to the continuous functions with compact support, and the continuous functions vanishing at infinity, respectively on $X$. $M(X)$ will then denote the dual of $C_c(X)$, the complex Radon measures on $X$.\\
		
		Namioka's pioneering work on right topological groups primarily relies on his introduction of the $\si$-topology. Given a right topological group $(G,\tau)$, the $\si$-topology on G is defined to be quotient topology given by the map
		\begin{align*}
		\phi: (G,\tau)\times(G,\tau) &\to G\\
		(x,y)&\mapsto x^{-1}y
		\end{align*}
		The symbol $\si$ is intended by Namioka to indicate ``symmetry" and is justified by the following theorem \cite{namioka}:
		\begin{thm}[Namioka] \label{si}
			Let $(G,\tau)$ be a right topological group and $\si$ be its $\si$-topology. Then,
			\begin{enumerate}
				\item $\si\subset \tau$ and equality holds if and only if $(G,\tau)$ is topological
				\item $(G,\si)$ is a semitopological group with a continuous inverse map
				\item $(G,\si)$ is $T_1$ if and only if $(G,\tau)$ is Hausdorff. Further, $(G,\si)$ is Hausdorff if and only if $(G,\tau)$ is topological
			\end{enumerate}
		\end{thm}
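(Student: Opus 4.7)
My plan is to lean entirely on the universal property of the quotient map $\phi$: a set $U \subset G$ is $\sigma$-open iff $\phi^{-1}(U)$ is open in $(G,\tau)\times(G,\tau)$, and a map $f\colon (G,\sigma)\to Y$ is continuous iff $f\circ\phi$ is $\tau\times\tau$-continuous. This reduces every claim about $\sigma$ to a computation about $\phi$ composed with $\tau$-continuous maps.

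For (1), to see $\sigma \subset \tau$, I would take a $\sigma$-open $U$, note that $\phi^{-1}(U)$ is $\tau\times\tau$-open, and then pull back along the continuous section $s\colon G\to G\times G$, $y\mapsto (e,y)$, which recovers $U = s^{-1}(\phi^{-1}(U))$ as $\tau$-open. For the equivalence with $(G,\tau)$ being topological: if $\sigma=\tau$ then $\phi$, being a quotient map onto $(G,\sigma)=(G,\tau)$, is $\tau\times\tau\to\tau$ continuous, which is exactly joint continuity of $(x,y)\mapsto x^{-1}y$; conversely, joint continuity forces $\phi^{-1}(U)$ open for every $\tau$-open $U$, giving $\tau\subset\sigma$.

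For (2), I would verify separate continuity of $m_\sigma$ and continuity of inversion by rewriting each composition in terms of $\phi$ and $\tau$-continuous translations. For inversion, $\iota\circ\phi(x,y) = y^{-1}x = \phi(y,x)$ factors through the swap, which is a $\tau\times\tau$-homeomorphism. For left translation, $L_g\circ\phi(x,y) = gx^{-1}y = \phi(xg^{-1},y)$, i.e.\ $\phi\circ(R_{g^{-1}}\times\mathrm{id})$, continuous since $R_{g^{-1}}$ is $\tau$-continuous; and symmetrically $R_g\circ\phi = \phi\circ(\mathrm{id}\times R_g)$. Combined, these give $\sigma$-continuity of $x\mapsto x^{-1}y_0$ and $y\mapsto x_0^{-1}y$.

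For (3), the $T_1$ equivalence reduces to: $\{g\}$ is $\sigma$-closed iff $\phi^{-1}(\{g\}) = \{(x,xg):x\in G\}$, the graph of $R_g$, is closed in $\tau\times\tau$. Taking $g=e$ gives $\phi^{-1}(\{e\})=\Delta_G$, whose closure in $\tau\times\tau$ characterizes Hausdorffness of $(G,\tau)$; conversely if $(G,\tau)$ is Hausdorff, each $R_g$ is $\tau$-continuous into a Hausdorff space so its graph is automatically closed. For the Hausdorff$\,\Leftrightarrow\,$topological equivalence, the $(\Leftarrow)$ direction is immediate from (1). The main obstacle is the $(\Rightarrow)$ direction: I would argue that if $(G,\sigma)$ is Hausdorff then the diagonal $\Delta_G$ is $\sigma\times\sigma$-closed, and combine this with the quotient structure of $\phi$ and the separate continuity from (2) to promote the $\tau\times\tau\to\sigma$ continuity of $\phi$ to $\tau\times\tau\to\tau$ continuity, yielding $\sigma=\tau$ via (1). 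This promotion step, exploiting Hausdorffness to rule out the extra $\tau$-open sets not already in $\sigma$, is where I expect the real subtlety to lie and is the place I would most closely follow Namioka's original argument.
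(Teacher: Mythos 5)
The paper itself offers no proof of this theorem --- it is quoted from Namioka's paper --- so your proposal can only be judged on its own terms. Parts (1), (2), and the $T_1$ half of (3) are handled correctly and completely: the section $y\mapsto(e,y)$ argument for $\sigma\subset\tau$, the factorizations $\iota\circ\phi=\phi\circ\mathrm{swap}$, $L_g\circ\phi=\phi\circ(R_{g^{-1}}\times\mathrm{id})$, $R_g\circ\phi=\phi\circ(\mathrm{id}\times R_g)$, and the identification of $\phi^{-1}(\{g\})$ with the graph of $R_g$ (closed when $\tau$ is Hausdorff, and equal to $\Delta_G$ when $g=e$) are exactly the right computations, and the universal property of the quotient is used correctly throughout.

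The genuine gap is the direction ``$(G,\sigma)$ Hausdorff $\Rightarrow$ $(G,\tau)$ topological,'' which is the only nontrivial claim in the theorem and which you explicitly leave as a hope to ``promote'' the $\tau\times\tau\to\sigma$ continuity of $\phi$ to $\tau\times\tau\to\tau$ continuity using $\sigma\times\sigma$-closedness of the diagonal and separate continuity. That is not an argument, and no such promotion can work at the stated level of generality: for the Sorgenfrey line $(\mathbb{R},+)$ with the lower-limit topology $\tau$ (a right topological, indeed semitopological, group that is not topological, since inversion is discontinuous), one checks directly from the definition that $\sigma$ is the Euclidean topology, which is Hausdorff. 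So the implication requires an additional hypothesis, and in Namioka's setting that hypothesis is compactness of $(G,\tau)$; with it the proof is short and you missed it: since $\sigma\subset\tau$ by (1), the identity map $(G,\tau)\to(G,\sigma)$ is a continuous bijection from a compact space onto a Hausdorff space, hence a homeomorphism, so $\sigma=\tau$ and (1) gives that $(G,\tau)$ is topological. (The same remark shows the ``$\Leftarrow$'' direction should also be read as ``$(G,\tau)$ is a compact Hausdorff topological group.'') As written, your proof is incomplete precisely where the theorem has content, and the strategy you sketch for closing it would have to fail unless you first import the compactness hypothesis.
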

		Observe that if $G$ is locally compact, a lack of being Hausdorff is all that holds $(G,\si)$ (and therefore $(G,\tau)$) from being topological. This also produces interesting examples of semitopological groups.\\
		
		One of the advantages of working with admissible right topological groups is the following theorem \cite{namioka}.
		
		\begin{thm}[Namioka] \label{open}
			Let $G$ be a admissible right topological group and let $\mathcal{U}$ be a base of open neighbourhoods of $e$ in $\tau$. Then, the following holds:
			\begin{enumerate}
				\item The quotient map $\phi: (G,\tau)\times (G,\tau) \to (G,\si)$ is open
				\item The family $\{U^{-1}U\mid U\in \mathcal{U}\}$ forms a base of $\si$-open neighbourhoods of $e$ in $(G,\si)$
			\end{enumerate}	
		\end{thm}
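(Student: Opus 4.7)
The plan is to establish part (1) first; part (2) will then follow from the identification $U^{-1}U = \phi(U \times U)$. For (1), unfolding the definition of the $\si$-topology as a quotient topology, I must show that for any $\tau \times \tau$-open $W \subseteq G \times G$ the preimage $\phi^{-1}(\phi(W))$ is again $\tau \times \tau$-open. The tactic will be to establish the set-theoretic identity
\[
\phi^{-1}(\phi(W)) \;=\; \bigcup_{g \in \Ld(G)} (L_g \times L_g)(W).
\]

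The inclusion $\supseteq$ is immediate, since $\phi(gx, gy) = (gx)^{-1}(gy) = x^{-1}y = \phi(x,y)$ lies in $\phi(W)$ whenever $(x,y) \in W$. For the harder inclusion $\subseteq$, given $(a,b) \in \phi^{-1}(\phi(W))$ there is $(x,y) \in W$ with $a^{-1}b = x^{-1}y$; I pick a basic open rectangle $U \times V \subseteq W$ around $(x,y)$ and try to rewrite $(a,b) = (gx', gy')$ for some $g \in \Ld(G)$ and some $(x', y') \in U \times V$. The constraint $g x' = a$ forces $g = a x'^{-1}$ and reduces $g y' = b$ to $y' = x'(a^{-1}b)$, so the whole problem collapses to producing a single element
\[
x' \;\in\; U \cap V b^{-1} a \cap \Ld(G)\, a.
\]
The set $U \cap V b^{-1} a$ is $\tau$-open, because right translation is a $\tau$-homeomorphism, and it is non-empty: $x \cdot a^{-1}b = x \cdot x^{-1}y = y \in V$ places $x$ itself in it. The set $\Ld(G)\,a$ is $\tau$-dense, as a right-translate of the dense topological center. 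A non-empty $\tau$-open set meets any $\tau$-dense set, so the triple intersection is non-empty; any $x'$ in it yields $g := a x'^{-1} \in \Ld(G)$ (writing $x' = h a$ with $h \in \Ld(G)$ gives $g = h^{-1}$) together with $(x', y') \in U \times V \subseteq W$, as required. Finally, each summand $(L_g \times L_g)(W)$ in the union is $\tau \times \tau$-open because, for $g \in \Ld(G)$, the map $L_g$ is a homeomorphism of $(G,\tau)$: $\Ld(G)$ is closed under inversion, so $L_{g^{-1}}$ supplies a continuous two-sided inverse. Hence $\phi^{-1}(\phi(W))$ is open, which is exactly what part (1) demands.

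Part (2) is then short. For any $U \in \mathcal{U}$, the set $U^{-1}U = \phi(U \times U)$ is $\si$-open by (1). Conversely, if $V$ is a $\si$-open neighbourhood of $e$, then $\phi^{-1}(V)$ is a $\tau \times \tau$-neighbourhood of $(e,e)$ since $\phi(e,e)=e$, so it contains some product $U \times U$ with $U \in \mathcal{U}$; applying $\phi$ yields $U^{-1}U \subseteq V$, so $\{U^{-1}U : U \in \mathcal U\}$ is a $\si$-neighbourhood base at $e$. The main obstacle is the set-theoretic identity in (1): once it is in place, openness falls out immediately, and the entire content of the proof is the density argument producing $x'$ in the triple intersection above. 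The only minor technicality is confirming that $L_g$ is a homeomorphism (not merely continuous) for each $g \in \Ld(G)$, which amounts to checking that the topological center is closed under inversion — a standard feature of this setting that one can assume or verify directly for the groups Namioka considers.
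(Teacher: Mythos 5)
Your argument is the standard density argument for this result (the paper quotes the theorem from Namioka and contains no proof of its own to compare against), and its core is right: saturate $W$ by left translations coming from the topological centre, and use density of $\Ld(G)a$ to find a point of $U\cap Vb^{-1}a$ of the required form. The step I would not dismiss as a "minor technicality" is precisely the one you flag at the end. Your identity $\phi^{-1}(\phi(W))=\bigcup_{g\in\Ld(G)}(L_g\times L_g)(W)$ uses $\Ld(G)^{-1}=\Ld(G)$ twice: once to conclude $g=h^{-1}\in\Ld(G)$, and once to know that $(L_g\times L_g)(W)$ is open, since openness of $L_g$ is exactly continuity of $L_{g^{-1}}$. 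For compact Hausdorff $G$ this is automatic (a continuous bijection of a compact space onto a Hausdorff space is a homeomorphism), but the theorem as stated carries no compactness hypothesis, and for a general right topological group there is no evident reason why the topological centre should be inversion-closed; as written, your proof genuinely depends on this unproved claim.

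Fortunately the argument repairs itself if you run the translations in the other direction, using preimages instead of images:
\begin{equation*}
\phi^{-1}(\phi(W))\;=\;\bigcup_{h\in\Ld(G)}(L_h\times L_h)^{-1}(W),
\end{equation*}
where each term is open by mere continuity of $L_h$ for $h\in\Ld(G)$. The inclusion $\supseteq$ is your same computation, $\phi(a,b)=a^{-1}b=(ha)^{-1}(hb)=\phi(ha,hb)\in\phi(W)$. For $\subseteq$, given $a^{-1}b=x^{-1}y$ with $(x,y)\in U\times V\subseteq W$, the set $\bigl(U\cap Vb^{-1}a\bigr)a^{-1}$ is open (right translations are homeomorphisms) and nonempty (it contains $xa^{-1}$, by the same check you made that $x\in U\cap Vb^{-1}a$), so by density it meets $\Ld(G)$; any $h$ in the intersection gives $ha\in U$ and $hb=(ha)(a^{-1}b)\in V$, hence $(a,b)\in(L_h\times L_h)^{-1}(W)$. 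This version uses nothing about $\Ld(G)$ beyond density and continuity of its left translations, and so proves the theorem in the stated generality. Your part (2) is fine as written, modulo the routine step of intersecting the two factors of the basic box and shrinking to a member of $\mathcal{U}$.
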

		
		For general right topological groups, it becomes hard to explicitly find the open sets in the $\si$-topology. Unlike the compact case, where many results in \cite{namioka} and \cite{lauloy1} hold for general, non-admissible groups, in the locally compact case, we heavily rely on the above result to produce analogues.

		Following Namioka, for subgroups $K$ of $G$, we denote by $G/K$ the left cosets of $K$ i.e. $\{xK\mid x\in G\}$. Namioka showed the following; 
		\begin{pr}[Namioka] \label{hausdorff}
			$G/K$ is Hausdorff if and only if $K$ is $\si$-closed.
		\end{pr}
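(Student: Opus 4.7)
The plan is to unpack both conditions so that they become statements about the preimage of $K$ under the map $\phi$, after which the definition of the $\sigma$-topology makes them literally the same condition.

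First I would verify that the quotient map $\pi : (G,\tau) \to G/K$ is open. This is the step that uses that $G$ is right topological: for any $\tau$-open $U \subseteq G$, one has $\pi^{-1}(\pi(U)) = UK = \bigcup_{k \in K} R_k(U)$, and each $R_k$ is a homeomorphism, so $\pi^{-1}(\pi(U))$ is open and hence $\pi(U)$ is open in $G/K$. Consequently, the product $\pi \times \pi : (G,\tau) \times (G,\tau) \to G/K \times G/K$ is also open and continuous.

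Next I would invoke the standard criterion for quotient spaces: if $q : X \to Y$ is a continuous open surjection, then $Y$ is Hausdorff if and only if the relation $R = \{(x,x') : q(x)=q(x')\}$ is closed in $X \times X$. One direction is immediate from continuity of $q \times q$; the converse uses openness of $q \times q$ to transport the complement of $R$ to the complement of the diagonal in $Y \times Y$. Applied to $\pi$, the relation in question is
\[
R = \{(x,y) \in G \times G : xK = yK\} = \{(x,y) : x^{-1}y \in K\} = \phi^{-1}(K).
\]
Thus $G/K$ is Hausdorff if and only if $\phi^{-1}(K)$ is closed in $(G,\tau) \times (G,\tau)$.

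Finally, I would apply the definition of the $\sigma$-topology as the quotient topology induced by $\phi$: a set $A \subseteq G$ is $\sigma$-open (respectively $\sigma$-closed) exactly when $\phi^{-1}(A)$ is open (respectively closed) in $(G,\tau) \times (G,\tau)$. Hence $\phi^{-1}(K)$ is closed in the product if and only if $K$ is $\sigma$-closed, completing the equivalence. I expect no serious obstacle; the only subtle point is ensuring openness of $\pi$, which is where the right-topological hypothesis enters, and no appeal to admissibility or continuity of left translations is required.
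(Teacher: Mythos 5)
Your argument is correct: openness of the coset map $\pi$ (because each right translation $R_k$ is a homeomorphism, so the saturation $UK=\bigcup_{k\in K}R_k(U)$ of an open set is open), the standard closed-relation criterion for Hausdorffness of an open quotient, and the identification $\{(x,y): xK=yK\}=\phi^{-1}(K)$ together with the definition of the $\si$-topology as the quotient topology of $\phi$ give exactly the stated equivalence, with no admissibility or continuity of left translations needed. The paper only cites this result from Namioka without reproducing a proof, and your argument is essentially the standard one, so there is nothing further to add.
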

		
		Recall that in the topological case it suffices for $K$ to be closed. We will use $(G/K, \tau)$ and $(G/K,\si)$ to mean the quotient topology on $G/K$ induced by $(G,\tau)$ and $(G,\si)$ respectively.\\
		
		Let $L\subset G$ be a closed normal subgroup. Let us denote by $(L,\si)$, the topology induced by $(G,\si)$. We warn the reader that this does not always coincide with the (finer) $\si$-topology of $(L,\tau)$ (work on this topology may be found in \cite{moran1}). We define $N(L)$ to be the intersection of all $\si$-closed $\si$-neighbourhoods of $e$ in $G$. The following is a fundamental result of Namioka \cite{namioka} which we generalize to locally compact topological groups in \Cref{haarsec}
		
		\begin{pr} \label{ng} Let $G$ be a compact Hausdorff right topological group. Then,
			\begin{enumerate}
			\item $N(L)$ is a $\si$-closed normal subgroup of $L$ 
			\item $(L/N(L),\tau)=(L/N(L),\si)$ and the resulting group is a compact topological group
			\item The action map 
			\begin{align*}
			(G/N(L),\tau) \times (L/N(L),\tau) &\to (G/N(L),\tau)\\
			([x],[y]) &\mapsto [xy]
			\end{align*}
			is jointly continuous.
			\end{enumerate}
		\end{pr}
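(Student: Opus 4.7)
My plan is to treat the three parts in order, using Namioka's $\sigma$-topology as the main bookkeeping tool.

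For part (1), the $\sigma$-closedness of $N(L)$ is immediate since it is an intersection of $\sigma$-closed sets. The substantive claim is that $N(L)$ is a normal subgroup of $L$. I would first verify that $N(L)\subseteq L$; this should come from the observation that $L$, being $\tau$-closed and normal in a compact Hausdorff right topological group, is $\sigma$-closed (one can write $L$ as an intersection of $\sigma$-closed cosets of itself, using that $\sigma$-continuous translations permute cosets). For closure under inversion, I would use Theorem \ref{si}(2): the inverse map is a $\sigma$-homeomorphism, so it sends the family of $\sigma$-closed $\sigma$-neighborhoods of $e$ to itself and hence fixes $N(L)$. For closure under multiplication, fix $x,y\in N(L)$ and a $\sigma$-closed $\sigma$-neighborhood $U$ of $e$; using separate $\sigma$-continuity of multiplication together with joint continuity at $(e,e)$ obtained from the compact Hausdorff structure, pick a symmetric $\sigma$-closed $\sigma$-neighborhood $V$ with $V\cdot V\subseteq U$, then $x,y\in V$ gives $xy\in U$. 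Normality in $L$ follows because for $y\in L$, both right and left multiplication by $y$ are $\sigma$-continuous (the former from $\sigma\subseteq\tau$, the latter from the semitopological structure of $(G,\sigma)$), so conjugation is a $\sigma$-homeomorphism preserving $\sigma$-closed $\sigma$-neighborhoods of $e$.

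For part (2), because $N(L)$ is $\sigma$-closed, Proposition \ref{hausdorff} gives that $L/N(L)$ is Hausdorff in the $\sigma$-quotient topology. Since $\sigma\subseteq\tau$, the identity $(L/N(L),\tau)\to(L/N(L),\sigma)$ is a continuous bijection; the domain is compact (continuous image of the compact $L$) and the codomain Hausdorff, so it is a homeomorphism. The resulting compact Hausdorff group inherits from $(G,\sigma)$ the structure of a semitopological group with continuous inversion, and Ellis's theorem (the first theorem quoted in the preliminaries) then upgrades it to a topological group.

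For part (3), first note the map is well-defined since $N(L)\triangleleft L$. The key preliminary observation is that the same compact-Hausdorff-to-compact-Hausdorff argument as in (2) gives $(G/N(L),\tau)=(G/N(L),\sigma)$: the quotient is $\tau$-compact and, by Proposition \ref{hausdorff} applied to the $\sigma$-closed $N(L)$, is $\sigma$-Hausdorff, so the continuous identity from $\tau$ to $\sigma$ is a homeomorphism. With this identification, separate continuity of the action is easy: right translation by any fixed $[y]$ is $\tau$-continuous on $G/N(L)$ because right translation by $y$ is $\tau$-continuous on $G$; and for fixed $[x]$, the map $[y]\mapsto[xy]$ is continuous because left multiplication by $x$ is $\sigma$-continuous on $G$, descending to a continuous map $(L/N(L),\sigma)\to(G/N(L),\sigma)$, which equals the corresponding $\tau$-$\tau$ map by the identifications just established. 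Now (2) says $L/N(L)$ is a compact topological group, and $G/N(L)$ is a compact Hausdorff (hence locally compact) space on which it acts separately continuously by the right topological action, so Namioka's Theorem \ref{separatecont} delivers joint continuity.

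I expect the main obstacle to be part (1), specifically closure of $N(L)$ under multiplication; separate $\sigma$-continuity alone is not obviously enough, and one needs to extract a joint-continuity-at-$(e,e)$ statement, either by invoking the compact Hausdorff structure carefully or by showing a regularity-type property for the $\sigma$-topology at $e$ (so that $\sigma$-closed $\sigma$-neighborhoods form a base). The rest of the argument is essentially a controlled application of Theorems \ref{si} and \ref{separatecont} together with Proposition \ref{hausdorff}.
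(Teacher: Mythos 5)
Your proposal has two genuine gaps, one of which is a step that is actually false. (For context: the paper does not reprove this proposition — it is quoted from Namioka — and its own work goes into the locally compact analogue, \Cref{loc5}, whose proof shows where the real difficulties sit.) The fatal step is your ``key preliminary observation'' in part (3) that $(G/N(L),\tau)=(G/N(L),\si)$. This is false in general: taking $L=\{e\}$ gives $N(L)=\{e\}$, and your argument would then yield $(G,\tau)=(G,\si)$, which by \Cref{si} holds only when $G$ is topological (so it already fails for the group of \Cref{hehe}, or any nontrivial CHART group). The error is in the appeal to \Cref{hausdorff}: that result controls Hausdorffness of the $\tau$-quotient $G/K$ in terms of $\si$-closedness of $K$; it does not make the \emph{$\si$-quotient} topology on $G/N(L)$ Hausdorff, and in general it is not (if it were, Ellis' theorem would make $G/N(L)$ a compact topological group and part (3) would be vacuous — the whole point of the proposition is that $G/N(L)$ is usually only right topological). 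Consequently your derivation of continuity in $[y]$ collapses: $\si$-continuity of left translation only gives continuity of $[y]\mapsto[xy]$ into the coarser topology $(G/N(L),\si)$, not into $(G/N(L),\tau)$. This second-variable continuity is precisely the hard content; in the paper's locally compact version it is obtained by factoring through the $\si$-topologies on the single coset $xL/N(L)$ and then inverting a continuous bijection $(xL/N(L),\tau)\to(L/N(L),\tau)$ by a compactness/Baire argument, after which \Cref{separatecont} upgrades separate to joint continuity. Note that the fact that the identification $\tau=\si$ holds on $L/N(L)$ but not on $G/N(L)$ is exactly because $N(L)$ is defined relative to $L$, so the compact-to-Hausdorff trick is available only for the quotient of $L$.

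The second gap is in part (1), closure of $N(L)$ under multiplication, which you yourself flag: you need joint continuity of multiplication at $(e,e)$ in the $\si$-topology and symmetric $\si$-closed $\si$-neighbourhoods $V$ with $VV\subseteq U$, and neither is available — $(G,\si)$ is compact but non-Hausdorff, so no Ellis/Lawson-type upgrade applies, and no regularity of $\si$ at $e$ has been established. The known route (Namioka's Corollary 1.1, which the paper itself invokes in \Cref{loc5}) avoids this entirely by identifying $N(\cdot)$ with the closure of $\{e\}$ in an iterated $\si$-topology; in a semitopological group with continuous inversion the closure of the identity is automatically a normal subgroup (translations and conjugations are homeomorphisms), so no joint continuity is ever needed. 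A smaller point: $N(L)$ is the intersection of the $\si$-closed $\si$-neighbourhoods of $e$ \emph{in $L$} (relative $\si$-topology), so $N(L)\subseteq L$ is automatic, and your parenthetical claim that a $\tau$-closed normal subgroup is $\si$-closed is unjustified — calibrating exactly that difference is what \Cref{hausdorff} is for. Your part (2), by contrast, is essentially fine (compact-to-Hausdorff bijection plus Ellis, matching the paper's strategy), modulo citing the correct form of Namioka's Hausdorffness statement for the $\si$-quotient of $L$.
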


		Via a quotient, one therefore obtains a topological group from the compact right topological group $G$. Namioka proved the non-triviality of this group in the following result for the countably admissible case ($\Ld(G)$ has a countable subsemigroup that is dense in $G$) \cite{namioka}, while Pym and Milnes generalized it to arbitrary compact admissible groups by Pym and Milnes \cite{milnespym2}.
		
		\begin{thm} \label{moorsnam}
			Suppose $G$ is a CHART group and $L\triangleleft G$ satisfies $L\neq \{e\}$. Then, $N(L)\neq L$.
		\end{thm}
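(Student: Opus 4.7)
The strategy is a proof by contradiction. Suppose $L\triangleleft G$ is a closed normal subgroup with $L\neq \{e\}$ yet $N(L)=L$. I would first unpack the hypothesis via admissibility. By \Cref{open}, the family $\{U^{-1}U : U\in \mathcal{U}\}$ is a base of $\si$-open neighbourhoods of $e$ in $G$, with $\mathcal{U}$ a base of $\tau$-open neighbourhoods of $e$. Intersecting with $L$, the sets $L\cap U^{-1}U$ form a base of $\si$-neighbourhoods of $e$ in $(L,\si)$. So $N(L)=L$ is equivalent to $L\subseteq U^{-1}U$ for every $\tau$-open neighbourhood $U$ of $e$.

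The target is to contradict this by producing, for some fixed $x_0\in L\setminus \{e\}$, a $\tau$-open neighbourhood $U$ of $e$ with $Ux_0\cap U=\emptyset$. The naive attempt using Hausdorffness of $(G,\tau)$ together with continuity of $L_{x_0}$ fails in general, since $L_{x_0}$ need not be $\tau$-continuous when $x_0\notin \Ld(G)$. I would therefore hunt for a separating continuous homomorphism via Peter-Weyl-type machinery. Using the Pym-Milnes Haar measure $\mu$ on $G$ \cite{milnespym}, one can construct finite-dimensional continuous unitary representations $\pi$ of $G$; each such $\pi$ factors through a compact Hausdorff topological quotient $G/\ker\pi$ (in particular $\ker\pi$ is $\si$-closed). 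If some such $\pi$ satisfies $\pi(x_0)\neq \pi(e)$, then the preimage of a sufficiently small $\tau$-open neighbourhood of $\pi(e)$ in $G/\ker\pi$ is a $\tau$-open $U\ni e$ with $Ux_0\cap U=\emptyset$, yielding the contradiction.

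The main obstacle is producing such a $\pi$ -- equivalently, showing that continuous homomorphisms from $G$ into compact Hausdorff topological groups separate $x_0$ from $e$. I would approach this via the strong normal system of subgroups associated with CHART groups (cf.\ \Cref{strongnorm} and the techniques of \cite{milnespym2}): build a transfinite descending chain $G=G_0\supset G_1\supset \cdots$ of $\si$-closed normal subgroups with each $G_{\al}/G_{\al+1}$ a compact Hausdorff topological group, and argue that $\bigcap_{\al} G_{\al}=\{e\}$. Since $L\neq \{e\}$, there is a least ordinal $\al$ with $L\not\subseteq G_{\al+1}$; composing the quotient $G\to G/G_{\al+1}$ with the projection onto the topological quotient $G_\al/G_{\al+1}$ yields the required separating continuous homomorphism non-trivial on $x_0\in L$.

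The inductive control on the chain -- namely that each $G_\al/G_{\al+1}$ is genuinely topological and that the chain is exhaustive -- is the delicate part and is where \Cref{ng} (lifted to each successive stage) and the Pym-Milnes construction of Haar measure carry the real weight. In the countably admissible case, this can instead be carried out by direct transfinite iteration of $N(\cdot)$ starting from $G$, as in Namioka's original argument \cite{namioka}.
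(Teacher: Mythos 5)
There is a genuine gap, and it is structural: your argument is circular relative to the way this result sits in the literature and in this paper. The paper does not prove \Cref{moorsnam} at all --- it quotes it from \cite{namioka} (countably admissible case) and \cite{milnespym2} (general CHART case) --- and it then states explicitly that Pym and Milnes \emph{exploited} \Cref{ng} and \Cref{moorsnam} to obtain \Cref{oldhaar} and \Cref{strongnormal}. Your proposal runs the implication backwards twice. First, you invoke the Pym--Milnes Haar measure on $G$ to run ``Peter--Weyl-type machinery''; but the Haar measure on a CHART group is obtained via the strong normal system, whose construction is the transfinite iteration $G_{\al+1}=N(G_\al)$, and the fact that this chain strictly decreases and terminates at $\{e\}$ \emph{is} \Cref{moorsnam}. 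Second, and for the same reason, your final step --- ``build a descending chain of $\si$-closed normal subgroups with compact Hausdorff topological quotients and argue that $\bigcap_\al G_\al=\{e\}$'' --- simply restates the theorem: the only known way to make the chain strictly decrease at each stage is exactly the assertion $N(L)\neq L$ for $L\neq\{e\}$. Nothing in your sketch supplies an independent reason for that strict decrease.

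The Peter--Weyl step also fails on its own terms. With only right-continuous multiplication, translation does not act continuously on $L^2(G)$: by \Cref{cont} of this very paper, continuity of the right regular representation forces $G$ to be topological, so the usual construction of finite-dimensional continuous subrepresentations from compact convolution operators is unavailable. Worse, every continuous homomorphism of $G$ into a Hausdorff topological group is $\si$-continuous and factors through $G/N(G)$ (Namioka's Corollary 1.1, cf.\ \Cref{bg} and the cited result of Milnes that no continuous representation of a non-topological CHART group is faithful). Hence producing a continuous unitary $\pi$ with $\pi(x_0)\neq\pi(e)$ is \emph{equivalent} to knowing $x_0\notin N(G)$, which is the content of the theorem in the case $L=G$; it cannot serve as the engine of its proof. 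A smaller inaccuracy: since $N(L)$ is the intersection of the $\si$-\emph{closed} $\si$-neighbourhoods of $e$ in $L$, the hypothesis $N(L)=L$ only says that each basic neighbourhood $L\cap U^{-1}U$ is $\si$-dense in $L$, not that $L\subseteq U^{-1}U$ for every $\tau$-neighbourhood $U$ of $e$; your intended contradiction would still go through via a $\si$-closed separating neighbourhood pulled back along a $\si$-continuous homomorphism, but only if such a homomorphism could be produced independently. The actual proofs (Namioka's, using a countable dense subsemigroup of $\Ld(G)$, and the Milnes--Pym/Moors--Namioka arguments, using separate-continuity and fragmentability-type techniques) work directly with the topology and make no use of Haar measure or representations; some such independent input is what your proposal is missing.
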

		
	Central to the existence of a Haar measure on compact right topological groups is the idea of a strong normal system of subgroups. A right topological group $(G,\tau)$ is said to have such a system if there exists a family $\{L_\xi\}_{\xi<\xi_0}$ of $\si$-closed normal subgroups of $G$, indexed by some ordinal $\xi_0>0$, satisfying the following conditions: 
	\begin{enumerate} \label{strongnorm}
		\item $L_{\xi_0}= G$, $L_0= \{e\}$, $L_\xi \supset L_{\xi+1}$ and for a limit ordinal $\xi<\xi_0$, $L_\xi=\cap_{\eta<\xi} L_\eta$;
		\item $L_{\xi}/L_{\xi+1}$ is a compact Hausdorff topological group
		\item The map 
		\begin{align*}
		G/L_{\xi+1} \times L_{\xi}/L_{\xi+1}&\to G/L_{\xi+1}\\
		([x],[y])&\mapsto [xy]
		\end{align*}
		is jointly continuous
		\end{enumerate}
		\bigskip
		
		Pym and Milnes exploited \Cref{ng} and \Cref{moorsnam} to obtain the following nice theorems \cite{milnespym}\cite{milnespym2}.
		
		\begin{thm} \label{oldhaar}
			Every compact Hausdorff right topological group with a strong normal system of subgroups has a unique right-invariant Haar measure that is additionally left-invariant with respect to the topological center.
		\end{thm}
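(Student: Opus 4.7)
The plan is to proceed by transfinite induction on the length of the strong normal system $\{L_\xi\}$, constructing at each stage $\xi$ a unique right-invariant Haar probability measure $\mu_\xi$ on the quotient $G/L_\xi$ (the final stage yielding $G$ itself). Each $G/L_\xi$ is compact Hausdorff and right topological: Hausdorffness comes from \Cref{hausdorff} since each $L_\xi$ is $\si$-closed, compactness is inherited, and right continuity of multiplication descends to quotients. The base case is trivial, and the inductive data at stage $\xi$ will be used together with the compact topological group $L_\xi/L_{\xi+1}$ and its ordinary Haar measure $\nu_\xi$.

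For the successor step, given $\mu_\xi$ on $G/L_\xi$, define for $f\in C(G/L_{\xi+1})$ the partial integral
\[
(Pf)([x]_{L_\xi}) := \int_{L_\xi/L_{\xi+1}} f([xy]_{L_{\xi+1}})\, d\nu_\xi([y]_{L_{\xi+1}}).
\]
Independence of the representative $x$ follows from right-invariance of $\nu_\xi$, and $Pf\in C(G/L_\xi)$ follows from the joint continuity in condition (3) of the definition of a strong normal system. Set $\mu_{\xi+1}(f):=\mu_\xi(Pf)$; this is a Radon probability measure. To check right-invariance, for $a\in G$ write $xya = xa(a^{-1}ya)$; because $L_{\xi+1}$ and $L_\xi$ are both normal in $G$, conjugation by $a$ descends to a continuous automorphism of the compact topological group $L_\xi/L_{\xi+1}$, and Haar measure on such a group is invariant under all continuous automorphisms. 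Hence $P(f\circ R_{[a]_{L_{\xi+1}}}) = (Pf)\circ R_{[a]_{L_\xi}}$, and right-invariance of $\mu_{\xi+1}$ follows from that of $\mu_\xi$. Uniqueness at this step is forced because any candidate measure must push forward along the projection $G/L_{\xi+1}\to G/L_\xi$ to a right-invariant Haar measure on $G/L_\xi$ (thus to $\mu_\xi$ by induction), and disintegration along the fibers, which are translates of $L_\xi/L_{\xi+1}$ with their unique Haar measure, pins it down as $\mu_{\xi+1}$.

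For a limit ordinal $\xi$, realize $G/L_\xi$ as the inverse limit of the coherent system $\{G/L_\eta\}_{\eta<\xi}$; the $\mu_\eta$ are compatible, so by Kolmogorov consistency (or equivalently, by using the Riesz representation theorem to extend the resulting positive linear functional defined on the dense subalgebra of $C(G/L_\xi)$ pulled back from finite stages) one obtains a unique Radon probability measure $\mu_\xi$ that is automatically right-invariant. Finally, left-invariance with respect to $\Ld(G)$ is an immediate consequence of uniqueness: for $g\in\Ld(G)$ the map $L_g$ is a homeomorphism of $G$, so the pushforward $(L_g)_*\mu$ is another right-invariant Haar probability measure, forcing $(L_g)_*\mu=\mu$. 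The step I expect to be most delicate is the limit stage, where one must verify that the projective-limit topology on $\varprojlim G/L_\eta$ really does coincide with the quotient topology on $G/L_\xi$, a point where the distinction between $\tau$-closure and $\si$-closure of the subgroups $L_\eta$ (and the availability of \Cref{hausdorff}) becomes essential.
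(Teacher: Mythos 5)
Your proposal is correct and is essentially the route the paper itself takes (for this statement the paper defers to Pym--Milnes, but its proof of the locally compact generalization \Cref{haarmeas} follows exactly your scheme): transfinite induction in which your operator $P$ is the averaging retraction of \Cref{loc1}, including the same conjugation/uniqueness-of-Haar trick to commute $P$ with right translations, with the limit stage handled by Stone--Weierstrass density of $\bigcup_{\eta<\xi}C(G/L_\eta)$ in $C(G/L_\xi)$ (which also disposes of your worry about the inverse-limit topology), uniqueness again by transfinite induction (your disintegration step is more cleanly done by averaging a candidate measure against $\nu_\xi$ via Fubini and pushing forward to $G/L_\xi$), and left-$\Ld(G)$-invariance then following formally from uniqueness since $L_g$ is a homeomorphism of a compact Hausdorff group. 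One small repair: the continuity of the conjugation $[y]\mapsto[a^{-1}ya]$ on $L_\xi/L_{\xi+1}$ does not follow from normality alone (normality only gives well-definedness); it follows from condition (3) of the strong normal system, namely continuity of $[y]\mapsto[a^{-1}y]$ into $G/L_{\xi+1}$ composed with the continuous right translation by $a$, exactly as in the proof of \Cref{loc1}.
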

		
		\begin{thm} \label{strongnormal}
			Every CHART group has a strong normal system of subgroups.
		\end{thm}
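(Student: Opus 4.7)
The plan is to construct the subgroups $\{L_\xi\}$ by transfinite recursion, using the operator $N(\cdot)$ as the successor step and intersections at limits, with Proposition \ref{ng} supplying the structural properties at each stage and Theorem \ref{moorsnam} guaranteeing strict descent. I read the statement's indexing as a strictly decreasing chain starting at $L_0 = G$ and ending at $L_{\xi_0} = \{e\}$, reconciling the conditions $L_\xi \supset L_{\xi+1}$ with the boundary values.

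Concretely, I would set $L_0 := G$ and, at each successor stage with $L_\xi \neq \{e\}$, define $L_{\xi+1} := N(L_\xi)$. Proposition \ref{ng}(1) gives that $L_{\xi+1}$ is a $\sigma$-closed normal subgroup of $L_\xi$; to upgrade normality from $L_\xi$ to $G$, I would invoke Theorem \ref{si}(2) to note that $(G,\sigma)$ is a semitopological group, so conjugation by any $g \in G$ is $\sigma$-continuous and hence permutes the family of $\sigma$-closed $\sigma$-neighbourhoods of $e$. Combined with the $G$-normality of $L_\xi$, this makes the intersection defining $N(L_\xi)$ conjugation-invariant in $G$. At limit ordinals $\xi$, set $L_\xi := \bigcap_{\eta < \xi} L_\eta$, which is automatically a $\sigma$-closed normal subgroup of $G$.

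Conditions (2) and (3) of a strong normal system then come for free: the fact that $L_\xi/L_{\xi+1} = L_\xi/N(L_\xi)$ is a compact Hausdorff topological group is exactly Proposition \ref{ng}(2), and the required joint continuity of $G/L_{\xi+1} \times L_\xi/L_{\xi+1} \to G/L_{\xi+1}$ is exactly Proposition \ref{ng}(3).

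The main obstacle is termination, and it is the point at which Theorem \ref{moorsnam} is indispensable: by that theorem, $L_\xi \neq \{e\}$ forces $N(L_\xi) \subsetneq L_\xi$, so the chain descends strictly at every successor step. Since $G$ has only set-many subgroups, a strictly decreasing transfinite chain of subgroups cannot continue indefinitely and must stabilize, and the strictness forces stabilization only at $\{e\}$; so there exists an ordinal $\xi_0$ with $L_{\xi_0} = \{e\}$. Without Theorem \ref{moorsnam} the recursion could stall at some nontrivial $L_\xi$ with $N(L_\xi) = L_\xi$, so although the construction itself is formal, the real depth of the result lies in the non-triviality theorem of Namioka and Pym--Milnes.
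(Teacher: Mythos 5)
Your construction is correct and is essentially the argument the paper points to: the paper does not prove \Cref{strongnormal} itself but cites Pym--Milnes, stating precisely that the result is obtained by exploiting \Cref{ng} and \Cref{moorsnam}, i.e.\ by transfinitely iterating $L_{\xi+1}=N(L_\xi)$ with intersections at limit ordinals and using \Cref{moorsnam} to force strict descent down to $\{e\}$. Your supplementary points (upgrading normality of $N(L_\xi)$ from $L_\xi$ to $G$ via $\sigma$-continuity of conjugation, and reading the boundary indexing as $L_0=G$, $L_{\xi_0}=\{e\}$, which matches how the system is actually used in the proof of \Cref{haarmeas}) are both sound.
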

		
		As a result, one obtains as a corollary the existence of a unique right-invariant Haar measure on all CHART groups. In this paper, we generalize \Cref{oldhaar}. We are currently unable to verify if \Cref{strongnormal} holds for $\si$-locally compact groups. 
		
		\section{Existence of Haar measure} \label{haarsec}
		
		We primarily consider $\si$-locally compact groups due to the following convenient result
		
		\begin{lem}[Open mapping theorem for right topological groups] \label{openmap} Suppose $(G,\tau)$ a $\si$-compact right topological group, and $H$ is a Hausdorff right topological group with the Baire property. If $\phi: G\to H$ is a continuous surjective homomorphism, then it is open.
		\end{lem}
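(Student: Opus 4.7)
The plan is to adapt the standard Baire-category proof of the open mapping theorem, being careful to use only continuity of right translations, which are homeomorphisms in any right topological group. Since $\phi$ is a homomorphism and $R_g$ is a homeomorphism on both $G$ and $H$, it suffices to show that for every $\tau$-open neighborhood $U$ of $e_G$, the set $\phi(U)$ is a neighborhood of $e_H$ in $H$.

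The Baire/covering step is routine: the family $\{Ux : x \in G\}$ is a $\tau$-open cover of $G$, and $\sigma$-compactness produces a countable subcover $G = \bigcup_n Ux_n$. Applying $\phi$, we obtain $H = \bigcup_n \phi(U)\phi(x_n)$, so the Baire property of $H$ forces some $\overline{\phi(U)\phi(x_n)}$ to have nonempty interior. Right translation by $\phi(x_n)$ being a homeomorphism of $H$, this in turn forces $\overline{\phi(U)}$ to have nonempty $\tau$-interior.

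The decisive step is a Pettis-type argument upgrading this to honest openness of $\phi(U)$ at $e_H$. I would find a $\tau$-open $V \subset U$ satisfying $V V^{-1} \subset U$, pick an open $W \subset \overline{\phi(V)}$ and a point $w_0 = \phi(v_0) \in W \cap \phi(V)$, and observe that the right translate $W\phi(v_0)^{-1}$ is an open neighborhood of $e_H$ contained in $\overline{\phi(Vv_0^{-1})} \subset \overline{\phi(V V^{-1})} \subset \overline{\phi(U)}$. The main obstacle is two-fold. First, producing $V$ with $V V^{-1} \subset U$ requires some form of continuity of $(x,y)\mapsto xy^{-1}$ at $(e,e)$, which is not free in a right topological group; I would exploit \Cref{si}, in particular the continuity of the quotient map $(G,\tau)\times(G,\tau)\to(G,\si)$ together with the continuous inverse in $(G,\si)$, to produce such a $V$ whenever $U$ contains a $\si$-open neighborhood of $e$, and reduce the general $\tau$-open case to this one via the interplay between $\si$ and $\tau$ at the identity. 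Second, replacing the outer closure $\overline{\phi(U)}$ by $\phi(U)$ itself requires a compactness device: I would shrink $V$ further to $V \cap K$ for a suitable compact piece $K$ in the $\sigma$-compact decomposition of $G$, so that $\phi(\overline{V\cap K})$ is compact and hence $\tau$-closed in the Hausdorff group $H$, absorbing the outer closure into $\phi(U)$.
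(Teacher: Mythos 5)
Your reduction to neighbourhoods of the identity and your covering-plus-Baire step are fine: right translations are homeomorphisms of $G$ and of $H$, $\si$-compactness gives a countable cover $G=\bigcup_n Ux_n$, and the Baire property of $H$ forces $\overline{\phi(U)}$ to have interior. The genuine gap is the Pettis-type step, namely the production of an open $V\ni e$ with $VV^{-1}\subset U$, and the repair you propose through the $\si$-topology cannot close it. First, a $\tau$-neighbourhood of $e$ contains a $\si$-neighbourhood of $e$ only when $G$ is already topological: if every $\tau$-neighbourhood of $e$ contained a $\si$-open neighbourhood, then, since right translations are homeomorphisms for both $\tau$ and $\si$ (the latter because $(G,\si)$ is semitopological), every $\tau$-open set would be $\si$-open, so $\tau=\si$ and $G$ would be topological by \Cref{si}. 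Hence your ``reduction of the general $\tau$-open case to the $\si$-case'' is vacuous exactly in the situations where the lemma is needed in this paper, e.g.\ for the identity map $(L/N(L),\tau)\to(L/N(L),\si)$ in \Cref{loc5}, where $\tau$ is a priori strictly finer; at best such an argument would prove openness of $\phi$ with respect to $\si$ on the domain, a strictly weaker statement. Second, even if $U$ were $\si$-open, $(G,\si)$ has only separately continuous multiplication (with continuous inverse), so the joint continuity of $(x,y)\mapsto xy^{-1}$ at $(e,e)$ needed to manufacture $V$ is precisely what is missing. Your compactness device for absorbing the outer closure is the standard one and unobjectionable, but it still requires a set satisfying $\overline{V\cap K}\,(\overline{V\cap K})^{-1}\subset U$, so it does not rescue the argument.

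The paper's proof (given only as a pointer) keeps the same Baire skeleton but replaces the symmetric-neighbourhood step by the device in the proof of part 3 of \Cref{loc5}: write $G=\bigcup_n C_n$ with $C_n$ compact, so that the $\phi(C_n)$ are closed sets covering $H$; by the Baire property some $\phi(C)$ has an interior point $\phi(c)$ with $c\in C$; openness is then extracted by a net argument using only continuity of right translations together with the fact that a continuous injection of a compact set into a Hausdorff space is a homeomorphism. No set $V$ with $VV^{-1}\subset U$ ever enters, which is how the argument survives the failure of joint continuity (note that every application of the lemma in the paper is to a bijective homomorphism, where this technique applies verbatim). That substitute for the Pettis step is the missing idea in your outline.
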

		
		The proof of this result follows similarly to the classical theorem on locally compact topological groups (see proof of 3 of \Cref{loc5} for a similar technique).\\
		
		Despite the $\si$-topology being non-Hausdorff, we obtain the following result
		
		\begin{lem} \label{baire}
			If $(G,\tau)$ is locally compact hausdorff admissible, then $(G,\si)$ is a locally compact semitopological group that has the Baire property.
		\end{lem}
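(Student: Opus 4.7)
The plan is to extract each conclusion from two previously stated facts about the $\sigma$-topology on an admissible $G$: that $(G,\sigma)$ is a semitopological group with continuous inverse (Theorem \ref{si}(2)), and that the quotient map $\phi: (G,\tau)\times(G,\tau)\to(G,\sigma)$, $(x,y)\mapsto x^{-1}y$, is continuous, open and surjective (Theorem \ref{open}(1)). The semitopological assertion is then free, so only local compactness and Baireness need work.

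For local compactness, I would start at the identity and translate. Pick a compact $\tau$-neighbourhood $V$ of $e$ with $\tau$-interior $U$. By Theorem \ref{open}(2), $U^{-1}U$ is a $\sigma$-open neighbourhood of $e$; since $U\subset V$, it is contained in $V^{-1}V = \phi(V\times V)$. The set $V\times V$ is compact in $(G,\tau)\times(G,\tau)$, and $\phi$ is continuous into $(G,\sigma)$, so $V^{-1}V$ is $\sigma$-compact. Thus $V^{-1}V$ is a $\sigma$-compact $\sigma$-neighbourhood of $e$. Because $(G,\sigma)$ is semitopological, both left and right translations are homeomorphisms (each being a continuous bijection whose inverse is translation by the group-inverse), so translating this neighbourhood around gives a compact neighbourhood at every point.

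For the Baire property, I would upgrade $\phi$ from open to Baire-transferring. The product $(G,\tau)\times(G,\tau)$ is locally compact Hausdorff (finite products preserve both), hence a Baire space by the classical locally-compact-Hausdorff Baire category theorem. Surjectivity of $\phi$ is immediate from $\phi(e,g)=g$. The standard lemma I want to invoke is: if $f: X\to Y$ is a continuous open surjection and $X$ is Baire, then $Y$ is Baire. The proof is short: given $\sigma$-dense $\sigma$-open sets $\{W_n\}$ in $G$, each $\phi^{-1}(W_n)$ is open, and is dense because for any non-empty open $\Omega\subset G\times G$, openness of $\phi$ makes $\phi(\Omega)$ a non-empty $\sigma$-open set, which meets $W_n$, pulling back to an intersection with $\Omega$. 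Baireness of $G\times G$ then gives $\bigcap_n \phi^{-1}(W_n)$ dense, and openness of $\phi$ propagates density forward to conclude $\bigcap_n W_n$ is $\sigma$-dense in $G$.

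I do not anticipate a serious obstacle: the work is really bookkeeping that properly applies Theorem \ref{open} (which is exactly where admissibility enters) and a standard Baire-transfer lemma through open continuous surjections. The only subtlety worth flagging is that $(G,\sigma)$ need not be Hausdorff, so "locally compact" here must be read in the weaker sense of every point having a compact neighbourhood; this weaker notion is still what the open mapping theorem in Lemma \ref{openmap} requires on the codomain side and is enough for the Baire argument above, so no Hausdorffness issues arise.
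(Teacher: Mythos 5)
Your proposal is correct and follows essentially the same route as the paper: admissibility gives the open continuous surjection $\phi:(G,\tau)\times(G,\tau)\to(G,\si)$, local compactness transfers through this map, and Baireness is obtained by pulling dense $\si$-open sets back to the Baire space $G\times G$ and pushing the dense intersection forward. One tiny wording slip: the forward transfer of density uses continuity and surjectivity of $\phi$ (openness is what makes the preimages dense), but this does not affect the argument.
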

		
		\begin{proof}
			As $(G,\tau )$ is admissible, by \Cref{open}, the continuous quotient map $\phi: (G\times G,\tau\times \tau) \to (G,\si)$ is open. It follows that $(G,\si)$ is locally compact.
Suppose $\{U_n\}_{n\in\mathbb{N}}\subset \si$ are $\si$-open sets dense in the $\si$ topology. We claim that $\cap_{n\in\mathbb{N}} U_n$ is dense in $(G,\si)$. Indeed, $\{\phi^{-1}(U_n)\}_{n\in\mathbb{N}}$ are open and dense in $G\times G$, so that by $G\times G$ being locally compact Hausdorff (whence Baire), $\cap_{n\in\mathbb{N}}\phi^{-1}(U_n)$ is dense in $G\times G$. Applying $\phi$, by surjectivity, $\cap_{n\in\mathbb{N}}{U_n}$ is dense in $(G,\si)$ as well.
		\end{proof}
		
		\bigskip

		We may now prove the locally compact version of \Cref{ng}.
		
		\begin{pr} \label{loc5}
			Let $(G,\tau)$ be a compact or $\si$-locally compact admissible Hausdorff right topological group. If $L\subset G$ is a $\si$-closed subgroup, then 
			\begin{enumerate}
				\item $N(L)$ is a normal subgroup  that is closed in $(L,\si)$
				\item $(L/N(L), \tau) = (L/N(L),\si)$ is a $\si$-compact Hausdorff topological group
				\item $G/N(L) \times L/N(L) \to G/N(L)$, $([x],[y]) \mapsto [xy]$ is jointly continuous.
			\end{enumerate}
		\end{pr}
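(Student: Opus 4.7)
The plan is to follow the scheme of Namioka's proof of \Cref{ng} for the compact case, replacing the classical ``continuous bijection from compact to Hausdorff'' arguments by \Cref{openmap} so as to cover the $\sigma$-locally compact setting. By \Cref{baire}, $(G,\sigma)$ is a locally compact semitopological group with continuous inverse and the Baire property, and since $L$ is $\sigma$-closed these features pass to the induced topology on $L$.

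For (1), I would verify $N(L)$ is a $\sigma$-closed subgroup of $L$ normal in $L$, adapting Namioka's compact-case argument directly: closedness is automatic from the defining intersection, the subgroup axioms follow from invariance of the defining family of $\sigma$-closed $\sigma$-neighborhoods of $e$ under $\sigma$-inversion and $\sigma$-translations (all $\sigma$-homeomorphisms of $(L,\sigma)$), and normality in $L$ follows from $\sigma$-continuity of the conjugations $\operatorname{Ad}_g = L_g \circ R_{g^{-1}}$ for $g \in L$.

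For (2), I would first show $(L/N(L),\sigma)$ is a locally compact Hausdorff topological group. Hausdorffness is \Cref{hausdorff}; local compactness uses openness of the $\sigma$-quotient $L \to L/N(L)$, since for any $\sigma$-open $V \subseteq L$ the saturation $V \cdot N(L)$ is a union of $\sigma$-homeomorphic right-translates of $V$. The quotient inherits the semitopological structure with continuous inverse, so Ellis's theorem promotes it to a topological group. Then I apply \Cref{openmap} to the continuous identity homomorphism $(L/N(L),\tau) \to (L/N(L),\sigma)$: the source is $\sigma$-compact (as a continuous quotient of $\sigma$-compact $L$) and right topological, while the target is Hausdorff, right topological, and Baire (as a locally compact Hausdorff space), so the identity is open, whence the two topologies agree and the common space is $\sigma$-compact.

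For (3), with $L/N(L)$ now a bona fide locally compact Hausdorff topological group, I would first establish joint continuity into $(G/N(L),\sigma)$ and then identify $(G/N(L),\tau)$ with $(G/N(L),\sigma)$. In the $\sigma$-topology both $x \mapsto xy$ and $y \mapsto xy$ are continuous on $G$ (by the semitopological structure of $(G,\sigma)$), so they descend to separately $\sigma$-continuous maps on the quotients; since $(G/N(L),\sigma)$ is locally compact Hausdorff, \Cref{separatecont} yields joint $\sigma$-continuity. The main obstacle is then the identification $(G/N(L),\tau) = (G/N(L),\sigma)$: both spaces are locally compact Hausdorff (the $\tau$-side via \Cref{hausdorff} and openness of the $\tau$-quotient), but since $N(L)$ is only normal in $L$ and not in general in $G$, $G/N(L)$ is merely a coset space, so \Cref{openmap} does not apply verbatim. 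The fix I plan is to adapt the Baire-category proof of that lemma to the continuous identity $(G/N(L),\tau) \to (G/N(L),\sigma)$, using the free right action of the topological group $L/N(L)$ on $G/N(L)$ by $\sigma$-homeomorphisms as a substitute for internal group structure, together with admissibility, which supplies a dense set of cosets coming from $\Lambda(G)$ on which the translation step already behaves $\tau$-continuously; once this identification is in hand, joint $\sigma$-continuity upgrades to the desired joint $\tau$-continuity.
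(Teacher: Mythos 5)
Your treatment of parts 1 and 2 matches the paper's: closedness and normality of $N(L)$ by Namioka's arguments, Hausdorffness of $(L/N(L),\si)$ from \Cref{hausdorff}, local compactness from \Cref{baire}, the open mapping theorem (\Cref{openmap}) applied to the identity $(L/N(L),\tau)\to(L/N(L),\si)$, and Ellis to get a topological group. The gap is in part 3, and it is not merely the technical obstacle you flag at the end --- the whole strategy for part 3 is aimed at statements that are false in general. First, you assert that $(G/N(L),\si)$ is locally compact Hausdorff in order to invoke \Cref{separatecont} on the $\si$-side; but by \Cref{quoti} the $\si$-quotient topology on $G/N(L)$ is the $\si$-topology of $(G/N(L),\tau)$, and by \Cref{si} that topology is Hausdorff precisely when $(G/N(L),\tau)$ is a topological group, which is exactly what cannot be assumed (only $L/N(L)$ is topological). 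Second, the identification $(G/N(L),\tau)=(G/N(L),\si)$ on which your upgrade from $\si$- to $\tau$-continuity rests is, again by \Cref{quoti} and \Cref{si}, equivalent to $(G/N(L),\tau)$ being topological, and this fails under the hypotheses: take $G=K\times H$ with $K$ a nontrivial compact topological group and $H$ a non-topological CHART group, and $L=K\times\{e\}$; then $N(L)=\{e\}$, $L/N(L)\cong K$ is nontrivial, but $G/N(L)=G$ is not topological, so $\tau\neq\si$ on $G/N(L)$ even though the conclusion of part 3 holds. So no Baire-category or free-action refinement can deliver the identification you are after, and the sketch you give ("a dense set of cosets coming from $\Lambda(G)$") does not constitute an argument.

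The paper's route avoids this entirely by proving separate continuity in $\tau$ and only then applying \Cref{separatecont}, where the acted-on space $(G/N(L),\tau)$ is genuinely locally compact Hausdorff. Continuity in the first variable is immediate from right topological structure. For the second variable one fixes $x\in G$ and factors $[y]\mapsto[xy]$ as $(L/N(L),\tau)=(L/N(L),\si)\to(xL/N(L),\si)\to(xL/N(L),\tau)$, using part 2 for the first equality and the fact that left translation is a $\si$-$\si$ homeomorphism for the middle arrow; the only nontrivial point is continuity of the last (identity) arrow, which is obtained by a Baire-category argument carried out on the single coset $xL/N(L)$ (which is $\si$-closed, hence $\tau$-closed and $\si$-compact in $\tau$): one writes it as a countable union of compact sets, finds one whose image under the continuous map $p:(xL/N(L),\tau)\to(L/N(L),\tau)$, $[xy]\mapsto[y]$, has interior, and uses right translation plus the compact-to-Hausdorff homeomorphism on that piece to show $p^{-1}$ is continuous. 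The crucial difference from your plan is that the comparison of $\si$ and $\tau$ is made only on the coset $xL/N(L)$, never on all of $G/N(L)$, which is why the argument goes through without the false global identification.
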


		\begin{proof}
			The compact case is shown in \cite{namioka}. We assume $G$ is $\si$-locally compact admissible. 
			
			Part 1 follows easily as $N(L)$ is the intersection of $\si$-closed neighbourhoods, and normality is guaranteed by Corollary 1.1 in \cite{namioka}. 
			
			By \Cref{hausdorff}, $(L/N(L),\si)$ is Hausdorff, while local compactness of the space follows from \Cref{baire} as $(L,\si)\subset (G,\si)$ is $\si$-closed. By \Cref{openmap} then, the continuous identity map $(L/N(L), \tau)\to (L/N(L),\si)$ is a homeomorphism and  $(L/N(L), \tau)=(L/N(L),\si)$ as claimed. As $(L/N(L),\si)$ is a locally compact Hausdorff semitopological group, by Ellis' theorem, 2 holds.
			
			Lastly, we prove 3. By continuity of right multiplication, it is clear that $G/N(L) \times L/N(L) \to G/N(L)$, $([x],[y]) \mapsto [xy]$ is continuous in the first variable. We will thus show continuity in the second variable. Let us fix $x\in G$. Then, the map $l_x: L/N(L) \to xL/N(L)$, $[y]\to [xy]$ is a $\si$-$\si$ homeomorphism. 
			We claim that the following composition of open maps is continuous;
			
			
			\begin{center}
				\begin{tikzpicture}
				\node (A) at (-2,1) {$(L/N(L),\tau)$};
				\node (B) at (-2,0) {$(L/N(L),\si)$};
				\node (C) at (1.5,0) {$(xL/N(L),\si)$};
				\node (D) at (5,0) {$(xL/N(L),\tau)$};
				\node (b) at (-2,-0.5) {$[y]$};
				\node (c) at (1.5,-0.5) {$[xy]$};
				\node (d) at (5,-0.5) {$[xy]$};
				\draw[->] (B) -- (C);
				\draw[->] (C) -- (D);
				\draw[|->] (-0.75,-0.5) -- (0.15,-0.5);
				\draw[|->] (2.85,-0.5) -- (3.65,-0.5);
				\draw[-] (-2.05,0.7) -- (-2.05,0.3);
				\draw[-] (-1.95,0.7) -- (-1.95,0.3);
				\end{tikzpicture}
			\end{center}

			%
			We claim that this composition is continuous. Indeed consider the continuous surjective inverse map $p:(xL/N(L),\tau) \to  (L/N(L),\tau)$, $[xy] \mapsto [y]$. Observe that $xL/N(L)$ being $\si$-closed in $G/N(L)$, is also $\tau$-closed, whence $\si$-compact in $\tau$.  By $\si$-compactness there exists a sequence of compact sets $\{C_n\}_{n\in\mathbb{N}}$ of $xL/N(L)$, such that $\cup_{n\in\mathbb{N}} C_n = xL/N(L)$. Then, $\{p(C_n)\}_{n\in\mathbb{N}}$ are clearly compact sets whose union is L/N(L). By the Baire property of $(L/N(L), \tau)$, there exists a compact set $C\in \{C_n\}_{n\in\mathbb{N}}$, such that $p(C)$ has an interior point $p(c)$, with $c\in C$.  If  $\{g_{\al}\}\subset L/N(L)$ is a net such that $p(g_{\al}) \to p(g)$ for some $g\in L/N(L)$, it follows by continuity of right multiplication of $(G/N(L),\tau)$ that $p(g_{\al})p(g)^{-1}p(c) = p(g_{\al}g^{-1}c) \to p(c)$ and $p(C)$ contains a tail of $\{p(g_{\al}g^{-1}c)\}$. However,  $p|a_{C}:(C,\tau) \to (x^{-1}C,\tau)$ is a homeomorphism (compact into Hausdorff space) so that applying $[p|_C]^{-1}$, $g_{\al}g^{-1}c\to c$. By continuity of right multiplication, clearly $g_{\al}\to g$ and thus, $p^{-1}$ is continuous. This concludes the proof of the claim. We have thus shown that the map in 3. is separately continuous. Joint continuity of the map then follows from Namioka's \Cref{separatecont}.
		\end{proof}
		
		\bigskip
		
		In order to prove the existence of a Haar measure on locally compact right topological groups, we use a technique similar to that in Pym and Milnes' original proof. We begin by giving the following lemma and provide a proof for completeness.
		
		\begin{lem} \label{loc1}
			Suppose $L$, $M$ are normal subgroups of $G$ satisfying the following conditions
			\begin{itemize}
				\item $L$, $M$ are compact in $(G,\si)$
				\item $L/M$ is a topological group
				\item $G/M\times L/M \to G/M$, $([x],[y])\to [xy]$ is jointly continuous.
			\end{itemize}
			Let $\nu$ be the unique Haar measure on $L/M$. Then, the map $\phi:\cc(G/M)\to \cc(G/L), \text{ } f \mapsto \int_{L/M} \ f(\cd t) \ d\ld(t)$, is a positive retraction such that if $f\in \cc(G/M)$ and $\text{\emph{supp}}(f)=K/M$, then $\text{\emph{supp}}(\phi(f))\subset K/L$. Moreover, for each $g\in G$, $R_g\circ\phi =  \phi\circ R_g$.
		\end{lem}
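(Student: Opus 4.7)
The plan is to verify each claimed property of $\phi$ in turn, with the translation-equivariance being the main nontrivial step.

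Well-definedness, continuity, and support can all be handled at once. For fixed $x\in G$, the function $[t]_M\mapsto f([x]_M[t]_M)$ is continuous on the compact space $L/M$ by the joint continuity of the action, so the integral defining $\phi(f)([x]_L)$ exists, and right-invariance of $\lambda$ under $L/M$ shows that replacing $x$ by $xs$ with $s\in L$ leaves it unchanged, so $\phi(f)$ descends to $G/L$. Continuity of $\phi(f)$ follows from the fact that $[x]_M\mapsto f([x]_M\cdot)$ is continuous from $G/M$ into $C(L/M)$ with the supremum norm (joint continuity together with compactness of $L/M$ gives the required uniform control), composed with the continuous functional $h\mapsto \int h\,d\lambda$. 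For the support, if $\supp(f)=K/M$ and $\phi(f)([x]_L)\neq 0$, then there must exist $t\in L$ with $[xt]_M\in K/M$, whence $x\in Kt^{-1}\subseteq KL$ and so $[x]_L\in K/L$; since $K/L$ is the continuous image of the compact set $K/M$, it is compact, hence closed in the Hausdorff space $G/L$ (Hausdorff by \Cref{hausdorff} applied to the $\si$-closed $L$), giving $\supp(\phi(f))\subseteq K/L$ and $\phi(f)\in C_c(G/L)$.

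Positivity is immediate since $\lambda\geq 0$, and the retraction claim is the observation that if $f\in C_c(G/L)$ is viewed in $C_c(G/M)$ as the pullback of $\tilde f$, then the integrand $f([x]_M[t]_M)=\tilde f([x]_L)$ is constant in $[t]_M$ and $\lambda$ is a probability measure, so $\phi(f)=\tilde f$.

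The delicate point is the intertwining identity $R_g\circ\phi=\phi\circ R_g$. Unfolding both sides and rewriting $[xtg]_M=[xg]_M\cdot[g^{-1}tg]_M$, the claim reduces to showing that the conjugation
$$c_g\colon L/M\to L/M,\qquad [t]_M\mapsto [gtg^{-1}]_M,$$
preserves $\lambda$. Normality of $L$ and $M$ in $G$ makes $c_g$ a well-defined bijective group homomorphism. It is continuous because it factors as the composition of left multiplication by $[g]_M$ (continuous in $[t]_M$ by the joint-continuity hypothesis) with right multiplication by $[g]_M^{-1}$ (continuous by the right topological group structure on $G/M$). Hence $c_g$ is a topological group automorphism of the compact topological group $L/M$, so $(c_g)_*\lambda$ is again a right-invariant probability measure on $L/M$ and must equal $\lambda$ by uniqueness of Haar measure; a change of variables then yields the desired identity. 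The main obstacle I anticipate is precisely justifying the continuity of the conjugation map, since in a right topological setting left multiplication is not generally continuous, and it is the joint-continuity assumption on $G/M\times L/M\to G/M$ that rescues the argument.
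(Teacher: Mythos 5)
Your proof is correct and follows essentially the same route as the paper: well-definedness and descent to $G/L$ via invariance of the Haar measure on $L/M$, the retraction and support claims by the same direct estimates, and the intertwining identity reduced to conjugation-invariance of the Haar measure on $L/M$, with continuity of the conjugation map obtained exactly as in the paper (the joint-continuity hypothesis for the left factor, right topological structure of $G/M$ for the right factor) and invariance then forced by uniqueness. The only cosmetic difference is your appeal to ``right-invariance'' where left-invariance is what descends $\phi(f)$ to $G/L$, which is harmless since the Haar measure of the compact topological group $L/M$ is bi-invariant.
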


		\begin{proof}
			By the continuity of the map $G/M\times L/M \to G/M$, $([x],[y])\to [xy]$, it is clear that the map $\int_{L/M} f(\cd t) \ d\ld(t)$ is well-defined. By the left-invariance of $\nu$, $\phi(f)\in C(G/L)$. Moreover, if $f\in \cc(G/L) \subset \cc(G/M)$, 	
			$$	s\mapsto \phi(f)(s) = \int_{L/M}f(st) d\ld(t) =  \int_{L/M}f(s) d\ld(t)= f(s)$$	so that the map is a retraction.
			
			If $\supp(f) = K/M$, then, for $s\in G$,
			$$|\phi(f)(s)| = \left|\int_{L/M}f(st) \  d\ld(t)\right| \leq {\|f\|}_{\infty} \int_{L/M} \ci_{K/M}(st) \ d\ld(t)=\ld((s^{-1}K\cap L)/M) $$
			
			Here $s^{-1}K\cap L$ is non-empty implies that 
			$s\in KL$. In other words, the above quantity is non-zero, only when $s\in KL$, so that if we consider $\phi(f)\in C(G/L)$, it follows that $[s]\in K/L$. This proves our claim and that $f\in \cc(G/L)$.
			
			Now, note that $L/M \to G/M \to L/M$, $[t] \mapsto [gt]\mapsto [gtg^{-1}]$ is continuous, the first map being continuous by assumption, and the second being continuous due to $G/L$ being a right topological group. It is easy to observe that the map $C(L/M)\to \mathbb{C}$, $f\mapsto \int f(gtg^{-1})d\mu(t)$ is right-translation invariant. However, since the Haar measure on $L/M$ is unique, it follows that $\int f(gtg^{-1})d\mu(t) = \int f(t) d\mu(t)$. Using this fact, if $g,s\in G$,
			
			\begin{align*}
			R_g\circ\phi(f)(s) =\phi(f)(sg)= \int_{L/M} f(sgt) d\mu(t)
	&= \int_{L/M} f(sgtg^{-1}g)d\mu(t)\\ &=\int_{L/M} f(stg) d\mu(t)\\ &=\int_{L/M}R_gf(st)d\mu(t) \\ &= \phi(R_gf)(s) 
			\end{align*}
			proving the last claim. 
			
		\end{proof}
			
		\bigskip
		
		We also obtain the following generalization from \cite{namioka};
		\begin{pr}
			If $G$ is a $\si$-locally compact admissible hausdorff right topological group and $f: G\to H$ is a homomorphism into a Hausdorff topological group $H$, then $f$ factors through $G/N(G)$.
		\end{pr}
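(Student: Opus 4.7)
The plan is to reduce the statement to the containment $N(G) \subset \ker f$, since then $\tilde f([x]) := f(x)$ is well-defined and gives the desired factorization $\tilde f : G/N(G) \to H$. I will tacitly assume $f$ is continuous (otherwise no such factorization can hold in general). The core ingredients will be the description of $\si$ as a quotient topology, the topological-group structure on $H$, and the very definition of $N(G)$ as the intersection of all $\si$-closed $\si$-neighbourhoods of $e$.

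My first step is to upgrade $f$ to a map that is continuous with respect to $\si$. Recall that $\si$ is the quotient topology induced by $\phi : (G,\tau)\times(G,\tau)\to G$, $(x,y)\mapsto x^{-1}y$, so a function on $G$ is $\si$-continuous precisely when its pull-back along $\phi$ is $(\tau\times\tau)$-continuous. Now $(x,y)\mapsto f(x^{-1}y) = f(x)^{-1}f(y)$ factors as $(x,y)\mapsto (f(x),f(y))$ followed by multiplication-after-inversion on $H$; the first factor is continuous from continuity of $f$ on each coordinate, and the second because $H$ is a topological group. Hence $f:(G,\si)\to H$ is continuous. I expect this transfer of continuity to the $\si$-topology to be the main conceptual hinge of the proof; once it is in place, the remainder is essentially bookkeeping.

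Next, for each open neighbourhood $V$ of $e_H$ in $H$, I would exhibit a $\si$-closed $\si$-neighbourhood of $e$ inside $f^{-1}(V)$. Using the regularity of the topological group $H$, I pick a closed neighbourhood $W$ of $e_H$ with $W\subset V$. Then $f^{-1}(W)$ is $\si$-closed, being the preimage of a closed set under the $\si$-continuous map $f$, and it contains $f^{-1}(\mathrm{int}(W))$, which is a $\si$-open neighbourhood of $e$. Therefore $f^{-1}(W)$ is a $\si$-closed $\si$-neighbourhood of $e$ in $G$, and the definition of $N(G)$ forces $N(G)\subset f^{-1}(W)\subset f^{-1}(V)$.

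Finally, since $H$ is Hausdorff, $\bigcap_V V = \{e_H\}$ as $V$ ranges over neighbourhoods of $e_H$, and hence $N(G)\subset \bigcap_V f^{-1}(V) = f^{-1}(\{e_H\}) = \ker f$. The induced map $\tilde f : G/N(G)\to H$ is then well-defined and, by the universal property of the quotient, is continuous. I would also remark that admissibility and $\si$-local compactness do not seem to enter this argument directly; they are presumably listed only to ensure, via \Cref{loc5}, that $N(G)$ is a $\si$-closed normal subgroup and that $G/N(G)$ is a well-behaved Hausdorff topological group, so that the factored map $\tilde f$ lands in a meaningful target.
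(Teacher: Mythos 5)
Your argument is correct, and its conceptual hinge is exactly the paper's: both proofs establish $\si$-continuity of $f$ by noting that $f\circ\phi(x,y)=f(x)^{-1}f(y)$ is $(\tau\times\tau)$-continuous because $f$ is continuous and $H$ is a topological group (the paper, like you, tacitly assumes continuity of $f$). Where you diverge is in the second half: the paper simply cites Corollary 1.1 of Namioka to pass from $\si$-continuity to the factorization, whereas you prove that content inline --- using that closed neighbourhoods of $e_H$ form a base at the identity in any topological group, you check that $f^{-1}(W)$ is a $\si$-closed $\si$-neighbourhood of $e$ for each closed neighbourhood $W$ of $e_H$, so the definition of $N(G)$ gives $N(G)\subset f^{-1}(W)$, and Hausdorffness of $H$ then yields $N(G)\subset\ker f$, after which the induced map $G/N(G)\to H$ is continuous by the universal property of the quotient topology. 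This buys a self-contained proof (essentially a re-derivation of the cited corollary) at the cost of a little length; your closing observation that admissibility and $\si$-local compactness never enter the argument is likewise consistent with the paper, whose own proof does not use them either --- they are relevant only to the surrounding structure theory of $N(G)$ and $G/N(G)$, e.g.\ \Cref{loc5}, which guarantees the quotient is a reasonable Hausdorff target.
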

		
		\begin{proof}
			Observe that $f\circ \phi: (G\times G,\tau\times \tau) \to H, (x,y)\mapsto f(x^{-1}y)= f(x)^{-1}f(y)$ is continuous as $f$ is continuous and $H$ is topological. By definition this implies $\si$-continuity of $f$. By Corollary 1.1 in \cite{namioka}, the factorization follows.
		\end{proof}

		The main theorem of this section follows.
		\begin{thm} \label{haarmeas}
			Suppose $G$ is a locally compact Hausdorff right topological group that has a compact strong normal system of subgroups. Then, $G$ has a right invariant Haar measure.
		\end{thm}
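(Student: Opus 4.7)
The plan is to carry out transfinite induction along the strong normal system $\{L_\xi\}$, at each stage producing a positive contractive operator from $\cc(G)$ into $\cc(G/L_\xi)$ that commutes with right translations. At the top of the induction the target is $\cc(G/G) = \mathbb{C}$, and the resulting positive right-invariant linear functional on $\cc(G)$ corresponds via Riesz representation to the required right-invariant Haar measure on $G$.

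The successor step is supplied directly by \Cref{loc1}: item 2 of the strong normal system definition gives that the relevant quotient (either $L_\xi/L_{\xi+1}$ or $L_{\xi+1}/L_\xi$, depending on indexing convention) is a compact Hausdorff topological group, hence carries a unique probability Haar measure, and item 3 supplies exactly the joint continuity hypothesis required by \Cref{loc1}. The retraction $\phi_\xi$ produced there is of operator norm one, positive, commutes with every $R_g$, and maps compactly supported functions to compactly supported functions with appropriately controlled supports. Composing these transfinitely defines operators $\Phi_\xi : \cc(G) \to \cc(G/L_\xi)$ that are positive, contractive, and right-equivariant, with $\Phi_{\xi_0}$ landing in $\cc(G/G) = \mathbb{C}$.

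The main obstacle is the limit ordinal step. At a limit $\lambda$, for fixed $f \in \cc(G)$, the family $\{\Phi_\xi(f)\}_{\xi<\lambda}$ is uniformly bounded by $\|f\|_\infty$ with supports contained in the compact set $\supp(f)\cdot L_\lambda$ (compactness of $L_\lambda$ being ensured by the compactness hypothesis on the strong normal system). To define $\Phi_\lambda(f) \in \cc(G/L_\lambda)$ I intend to extract a cluster point of this bounded net, either by applying Banach--Alaoglu in the dual of $C(\supp(f)\cdot L_\lambda)$ and forcing uniqueness via the retraction relation $\phi_\xi \circ \Phi_\xi = \Phi_{\xi+1}$, or by showing directly that iterative averaging against the probability Haar measures on the compact quotients renders the net uniformly Cauchy (equivalently, $L_\lambda$-invariant in the limit). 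The limit will inherit positivity, continuity, right-equivariance, and $L_\lambda$-invariance from the $\Phi_\xi$, so the induction proceeds; at $\xi = \xi_0$ the resulting functional on $\cc(G)$ is the desired positive right-invariant linear functional, and Riesz representation yields the Haar measure.
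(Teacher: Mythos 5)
There is a genuine structural gap: your recursion runs against the well-order supplied by the hypothesis. A strong normal system, as it is actually used in the proof of \Cref{haarmeas} (base case on $G/L_1=L_0/L_1$, final stage $L_{\xi_0}=\{e\}$, conditions on $L_\xi/L_{\xi+1}$ and on the action on $G/L_{\xi+1}$), is a \emph{decreasing} chain $G=L_0\supset L_1\supset\cdots\supset L_{\xi_0}=\{e\}$ of $\si$-closed normal subgroups with $L_\xi=\bigcap_{\eta<\xi}L_\eta$ at limit ordinals. Your scheme, by contrast, needs the subgroups to form an \emph{increasing} well-ordered chain from $\{e\}$ up to $G$ (with unions, or closed unions, at limits), because the operators $\Phi_\xi\colon \cc(G)\to \cc(G/L_\xi)$ are to be built by composing the averaging maps of \Cref{loc1} starting from $\cc(G)$. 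You flag this as a mere ``indexing convention,'' but it is not: the reverse of an infinite well-order is not well-founded. If the system does not terminate after finitely many steps --- say $\xi_0=\omega$, so $L_0\supsetneq L_1\supsetneq\cdots$ with $\bigcap_n L_n=\{e\}$ --- there is no smallest nontrivial subgroup in the chain, hence no first averaging map out of $\cc(G)$, and your transfinite composition cannot even begin; likewise, when $\xi_0$ is a limit ordinal the last step of your composition has no predecessor to average over. The paper's induction goes the other way: it constructs consistent positive right-invariant functionals $\psi_\xi$ on the \emph{increasing} algebras $\cc(G/L_\xi)$ (base step: a Haar measure of the locally compact topological group $G/L_1$; successor step: $\psi_{\xi+1}=\psi_\xi\circ\phi_\xi$), so that only the final functional $\psi_{\xi_0}$ lives on $\cc(G)$, and no operator defined on all of $\cc(G)$ is ever needed.

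Even setting the direction aside, the limit step --- which you correctly identify as the main obstacle --- is left as two untested strategies, and both have holes. A uniformly bounded net of continuous functions need not have a continuous cluster point: Banach--Alaoglu (or pointwise compactness) gives cluster points that need not be continuous on $G/L_\lambda$, and continuity there is precisely what is required to continue the recursion; moreover, choosing a cluster point separately for each $f$ endangers linearity, positivity and right-equivariance of $\Phi_\lambda$ unless all choices are made along one fixed universal subnet. Your alternative --- that the net is uniformly Cauchy --- is asserted, not proved, and it is essentially the convergence statement one is trying to establish. The paper's limit step avoids all of this by working with scalars only: $\bigcup_{\eta<\xi}\cc(G/L_\eta)$ is dense in $C_0(G/L_\xi)$ by Stone--Weierstrass, approximants are multiplied by a Urysohn cutoff to pin their supports inside a fixed compact set, and the monotone bound $\psi_\eta(K/L_\eta)\le\psi_1(K/L_1)<\infty$ on compacta makes the sequence of numbers $\psi_{\xi_n}(p_nf_n)$ Cauchy, which defines $\psi_\xi(f)$ directly and makes positivity, right-invariance and consistency easy to check. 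To repair your argument you would either have to show that the given decreasing system can be replaced by an increasing one with the analogous properties (which is not assumed and not known), or recast the construction as an extension of functionals along the given well-order, which is exactly what the paper does.
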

		
		\begin{proof}
			Let $\{L_\xi\}_{\xi\leq \xi_0}$ be the given strong normal system of subgroups. For each $\xi > 0$, we denote by 
			$\phi_\xi: \cc(G/L_{\xi+1})\to \cc(G/L_{\xi})$, the map from \Cref{loc1}, and by $\nu_{\xi}$ the Haar measure on $L_{\xi}/L_{\xi+1}$. Using transfinite induction, we construct for each $\xi>0$, a linear functional $\psi_\xi: \cc(G/L_\xi)\to \mathbb{C}$, satisfying the following conditions:
			\begin{enumerate}
				\item $\psi_\xi$ is positive
				\item $\psi_\xi$ is right-invariant
				\item $\psi_{\xi}(f) = \psi_{\eta}(f)$, for all $f\in \cc(G/L_\eta)$, for all $0<\eta\leq \xi$.
				\item For each $K$ compact, $\psi_\xi(K/L_\xi)\leq\psi_1(K/L_1)<\infty$
			\end{enumerate}
			By the Riesz representation theorem, each $\psi_\xi$ on $\cc(G/L_\xi)$, corresponds to a unique regular Borel  measure on $G/L_\xi$, and 4 implies the existence of a common upper bound for these for every fixed compact set of $G$. We shall show that $\psi_{1}$ is non-zero, so that by 3, it follows that for some $f\in C_c(G/L_1)$,  $0<\psi_{\eta}(f) = \psi_{\xi}(f)$, i.e. $\psi_{\xi}$ is non-zero.

			For the base case, we observe that $G/L_1 = L_0/L_1$ by assumption is a locally compact Hausdorff topological group. Thus, we may fix a Haar measure $\psi_{1}$ on $G/L_1$, so that the map $\psi_{1}: \cc(G/L_1)\to \mathbb{C}$ is the desired linear functional satisfying 1-4. 
			
			Suppose for $\xi<\xi_0$, there exists a functional $\psi_{\xi}: \cc(G/L_\xi) \to \mathbb{C}$ of the desired form. Then, we define $\psi_{\xi+1}: \cc(G/L_{\xi+1})\to \mathbb{C}$ to be given by $\psi_{\xi+1}= \psi_{\xi}\circ \phi_{\xi}$. Positivity and right invariance are clear from \Cref{loc1} and the right invariance of $\psi_{\xi}$.  For any $0<\eta\leq \xi+1$,  $f\in \cc(G/L_\eta)$,  
			$$\psi_{\xi+1}(f) = \psi_{\xi}\circ \phi_\xi (f)= \psi_{\xi}(f)= \psi_\eta(f)$$
			Here, since $\xi+1$ is the smallest ordinal following $\xi$, any ordinal $\eta <\xi+1$, satisfies $\eta \leq \xi$, so that the second equality follows from the retraction property of $\phi_{\xi}$ (\Cref{loc1}), and the third equality follows from the induction assumption. Lastly, for any $f \in \cc(G/L_{\xi+1})$ with support $K$ in $G$, by \Cref{loc1}, $\phi_\xi(f)$ has support $K/L_\xi$, so that $$\psi_{\xi+1}(f)= \psi_{\xi}\circ\phi_{\xi}(f)\leq \|f\|_{\infty}\psi_{\xi}(K/L_\xi)<\psi_{1}(K)$$ and 4 holds for the successor case.	
			
			Suppose $\xi\leq \xi_0$ is a limit ordinal so that $L_\xi = \cap_{\eta<\xi}L_\eta$. Then, consider the subalgebra $D=\cup_{\eta<\xi} \cc(G/L_\eta) \subset \cc(G/L_\xi)\subset C_0(G/L_\xi)$. If $[x]\neq [y]$, for $[x],[y] \in G/L_\xi$, then $x^{-1}y\not\in L_\xi= \cap_{\eta<\xi}L_{\eta}$, so that for some $\eta<\xi$, $[x]\neq [y]$ in $G/L_\eta$. By local compactness then, there exists some $f\in\cc(G/L_\eta)$, such that $f([x])\neq f([y])$. It follows that $D$ separates points in $G/L_{\xi}$. Moreover, it is clear that $D$ is vanishing nowhere. By the Stone-Weierstrass theorem, $D$ is dense in $C_0(G/L_\xi)$.
			
			Now for each compact set $K$, we fix an open neighbourhood $U_K$ of $K$ such that $\overline{U_K}$ is compact. Then, by Urysohn's lemma, for each $\xi\leq \xi_0$, there exists a function $p^K_\xi: G/L_{\xi_n}\to [0,1]$, such that $\ci_{K/L_{\xi_n}}\leq p_\xi^K\leq \ci_{U_K/L_{\xi_n}}$.	Consider any $f\in \cc(G/L_\xi)$ and  let $\{f_n\}_{n\in\mathbb{N}} \subset D$ be such that $f_n\to f$.  Without loss of generality, we assume  ${\|f\|}_{\infty}\leq 1$ so that $\{f_n\}_{n\in\mathbb{N}}$ may be chosen to satisfy ${\|f_n\|}_{\infty}\leq 1$. Let us denote by $\xi_n$, the ordinal corresponding to each $f_n\in \cc(G/L_{\xi_n})$. Suppose $C= \supp(f)$ in $G$.
 Then, $\{f_n p_{\xi_n}^C\}_{n\in\mathbb{N}}\subset D$, which we will write as $\{f_np_{n}^C\}_{n\in\mathbb{N}}$ have supports contained in  $U_C/L_{\xi_n}$, for each $n\in \mathbb{N}$,  and clearly, converge to $f$.
 	Since $\xi=\sup_{\eta<\xi} \eta$, and $\cc(G/L_{\eta_1}) \subset \cc(G/L_{\eta_2})$, for $\eta_1\geq\eta_2$, we may assume that $\{\xi_n\}$ is monotone increasing, so that for $m\geq n$, by the induction assumption,
			\begin{align*} 
			\|\psi_{\xi_n}(p_nf_n)-\psi_{\xi_m}(p_mf_m)\| = \|\psi_{\xi_m}(p_nf_n-p_m f_m)\| &\leq \psi_{\xi_m}(U_C/L_{\xi_m})\|f_n-f_m\|\\&\leq \psi_{\xi_1}(U_C/L_1)\|f_n-f_m\| \to 0 
			\end{align*}
			as $m,n\to \infty$. Thus, the sequence $\{\psi_{\xi_n}(p_nf_n)\}_{n\in\mathbb{N}}$ is Cauchy, so that we define	  $\psi_{\xi}(f)=\lim_{n\in\mathbb{N}}\psi_{\xi_n}(p_nf_n)$. It is easily checked that $\psi$ is well-defined.
			
			That $\psi_{\xi}$ is positive and linear follows from its definition and the induction assumption. Right-invariance of $\psi_{\xi}$ also follows as $\{f_n\}_{n\in\mathbb{N}}$ from above satisfies, $\{ R_gf_n\}_{n\in\mathbb{N}}\subset D$, $R_gf_n \to R_gf$, so that $$\psi_{\xi}(R_gf)=\lim_{n\in\mathbb{N}}\psi_{L_{\xi_n}}(R_gf_n)=  \lim_{n\in\mathbb{N}}\psi_{L_{\xi_n}}(f_n)= \psi_{\xi}(f)$$
			
			Regarding 3, one notes that if $0<\eta\leq \xi$, then, for $f\in \cc(G/L_\eta)$, one may consider an arbitrary sequence $\{\xi_n\}\geq \eta$, and take $f_n=f$ trivially, so that by the induction assumption,
			$$\psi_{\xi}(f)= \lim_{n\in\mathbb{N}}\psi_{\xi_n}(f_n)= \lim_{n\in\mathbb{N}}\psi_{\eta}(f_n)= \psi_{\eta}(f)$$
			
			Lastly, $\psi_\xi(K)\leq\psi_1(K)<\infty$, follows similarly from the induction assumption.
			
			Since $L_{\xi_0} =\{e\}$, there exists a linear functional $\psi_{\xi_0}$ on $C_c(G)$ satisfying the criteria 1-4. This provides the desired Haar measure.
		\end{proof}

		\bigskip 
		
		\begin{thm}
			The Haar measure in theorem \Cref{haarmeas} is unique up to scalar multiplication.
		\end{thm}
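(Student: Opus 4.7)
My plan is to prove uniqueness by transfinite induction along the compact strong normal system $\{L_\xi\}_{\xi\leq\xi_0}$, comparing an arbitrary right-invariant Haar measure $\mu$ on $G$ with the $\psi_{\xi_0}$ constructed in the proof of \Cref{haarmeas}. For each $1\leq\xi\leq\xi_0$, the right-invariance of $\mu$ under the compact subgroup $L_\xi$ lets me produce a right-invariant Radon quotient measure $\mu_\xi$ on $G/L_\xi$ (concretely, $\mu_\xi(f):=\mu(f\circ q_\xi)$, where $q_\xi:G\to G/L_\xi$ is proper since $L_\xi$ is compact and the action of $L_\xi$ on $G$ is well-behaved via \Cref{loc5}). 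The inductive claim is $\mu_\xi=c\,\psi_\xi$ for one common scalar $c\geq 0$; at $\xi=\xi_0$ (where $L_{\xi_0}=\{e\}$ and $q_{\xi_0}=\mathrm{id}$) this yields $\mu=c\,\psi_{\xi_0}$, with $c>0$ forced by backward propagation from $\mu\neq 0$.

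For the base case $\xi=1$, $G/L_1$ is a locally compact Hausdorff topological group, so the classical uniqueness of right-invariant Haar measure supplies $c$. For the successor step, assume $\mu_\xi=c\,\psi_\xi$. The right action of $L_\xi/L_{\xi+1}$ on $G/L_{\xi+1}$ is jointly continuous by the strong normal system, and $\mu_{\xi+1}$ is invariant under it; combining this invariance with Fubini applied to $\mu_{\xi+1}\otimes\nu_\xi$ gives, for any $f\in\cc(G/L_{\xi+1})$,
\[
\mu_{\xi+1}(f)=\int_{L_\xi/L_{\xi+1}}\mu_{\xi+1}(R_t f)\,d\nu_\xi(t)=\mu_\xi(\phi_\xi(f)),
\]
the last equality using that $\phi_\xi(f)\in\cc(G/L_\xi)$ lifts to an $L_\xi$-invariant function on $G/L_{\xi+1}$ on which the two quotient measures $\mu_\xi$ and $\mu_{\xi+1}$ agree. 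The inductive hypothesis and the relation $\psi_{\xi+1}=\psi_\xi\circ\phi_\xi$ from the existence proof then yield $\mu_{\xi+1}(f)=c\,\psi_\xi(\phi_\xi(f))=c\,\psi_{\xi+1}(f)$. For a limit ordinal $\xi$, I re-use the Stone-Weierstrass density argument from the proof of \Cref{haarmeas}: approximate $f\in\cc(G/L_\xi)$ uniformly by $\{f_np^C_n\}\subset D=\cup_{\eta<\xi}\cc(G/L_\eta)$ with supports in a common compact set $\overline{U_C}/L_{\eta_n}$, apply the inductive hypothesis at each level $\eta_n$ to get $\mu_\xi(f_np^C_n)=\mu_{\eta_n}(f_np^C_n)=c\,\psi_{\eta_n}(f_np^C_n)$, and pass $n\to\infty$ (using finiteness of $\mu_\xi$ on $\overline{U_C}/L_\xi$ together with uniform convergence of $f_np^C_n\to f$) to conclude $\mu_\xi(f)=c\,\psi_\xi(f)$.

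The main obstacle will be the Fubini interchange in the successor step, which requires joint measurability of $(x,t)\mapsto f(xt)$ on $G/L_{\xi+1}\times L_\xi/L_{\xi+1}$ relative to $\mu_{\xi+1}\otimes\nu_\xi$. The joint continuity of the action map built into the strong normal system makes this function continuous with compact support, so the classical Fubini theorem for Radon measures on locally compact Hausdorff spaces should apply cleanly. A secondary technical concern is setting up the quotient measures $\mu_\xi$ rigorously---essentially a disintegration along $L_\xi$-cosets supported by the compactness of $L_\xi$ and \Cref{loc5}---since without properness of $q_\xi$ the pullback definition of $\mu_\xi$ would not land in $\cc(G)$ and the induction would collapse; this is where the bulk of the bookkeeping will live.
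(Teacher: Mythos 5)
Your proposal is correct and follows essentially the same route as the paper: transfinite induction with the base case settled by classical uniqueness on the topological group $G/L_1$, the successor step by averaging over $\nu_\xi$, Fubini, right-invariance and the recursion $\psi_{\xi+1}=\psi_\xi\circ\phi_\xi$, and the limit step by re-running the Stone--Weierstrass/density argument from the existence proof with dominated convergence. The only difference is presentational: you package the restriction of $\mu$ as pushforward quotient measures $\mu_\xi=(q_\xi)_*\mu$ (and explicitly flag the properness of $q_\xi$ and the forcing of $c>0$), whereas the paper works directly with $\mu$ on the subspaces $\cc(G/L_\xi)\subset \cc(G)$ of $L_\xi$-invariant functions, which amounts to the same computation.
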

		
		\begin{proof}
			 Let $\{\psi_\xi\}_{\xi\leq \xi_0}$ be the Haar measures on $G/L_{\xi}$, $0<\xi\leq \xi_0$ and suppose $\mu$ is a Haar measure on $G$. As before we denote by $\mu_\xi$ the unique Haar measure on $L_{\xi}/L_{\xi+1}$. We shall show that there exists $c>0$ such that $c\mu(f) = \psi_\xi(f)$, for all $f\in \cc(G/L_{\xi})$, for all $\xi\leq\xi_0$ using transfinite induction.
			
			Since $G/L_1$ is a locally compact topological group, and $\mu$ forms a Haar measure on it, it follows that there exists some $c>0$, such that $\mu(f) = c\psi_1(f)$. 
			
			Assume the same induction hypothesis holds for $\xi<
			\xi_0$, with the same constant $c$. Then, for any $f\in \cc(G/L_{\xi+1})$,
			\begin{align*}
			\psi_{\xi+1}(f)= \psi_{\xi}\circ\phi_\xi(f) &= \int \phi_\xi(s) d\mu(s)\\
			&=c\int \int_{L_\xi/L_{\xi+1}} f(st) d\mu_{\xi}(t) d\mu(s)\\  
			&=c\int \int_{L_\xi/L_{\xi+1}} f(s) d\mu_{\xi}(t) d\mu(s)\\  
			&=c\int f(s) d\mu(s)\\ &= c\mu(f)
			\end{align*}
			where we made use of the right-translation invariance of $\mu$. The successor case is hence justified.
			
			Now suppose $\xi$ is a limit ordinal and $f\in \cc(G)$. Recall from the proof of theorem, that $\psi_\xi(f) = \lim_{n\in\mathbb{N}}\psi_{\xi_n}(f_n)$, for $f_n\in \cc(G/L_{\xi_n})$, where  $\cup_{n\in\mathbb{N}}\supp(f_n)
			\subset U$  for some compact set $U\subset G$ and $\|f_n\|\leq \|f\|_{\infty}$ for all $n\in\mathbb{N}$. By our induction hypothesis, therefore, $\psi_\xi(f) =c \lim_{n\in\mathbb{N}}\mu(f_n) = c\mu(f)$, by the dominated convergence theorem. 
			This concludes the proof.
		\end{proof}

			\section{Function algebras} \label{func}

		In this section, we shall discuss classical function algebras in the locally compact right topological group context. In the process we generalize several results of Lau and Loy \cite{lauloy1}\cite{lauloy2}.\\
%
%
%
%
		
		Given a CHART group, one might question when its quotient groups are topological (and thus automatically have a Haar measure). To answer this, we need the $\si\si$-topology of $(G,\si)$.\\
		
		We define the  {$\si\si$ topology} on $G$ to be the $\si$ topology of the compact semi-topological group $(G,\si)$ i.e. the topology induced by the quotient map $(G,\si)\times (G,\si) \to G$, $(g,h)\mapsto g^{-1}h$. By \Cref{si}, $\si\si \subsetneq \si$ and $(G,\si\si)$ is a compact semitopological group with continuous inverse. Furthermore, by \cite{namioka}, Corollary 1.1, $N(G)$ is precisely $\overline{\{e\}}^{\si\si}$.\\
		
		Let $H\subset G$ be a closed normal subgroup. Recall that $(G/H,\tau)$, $(G/H,\si)$ denotes $G/H$ with the quotient topologies induced by $(G,\tau)$ and $(G,\si)$ respectively, where we denote the respective quotient maps by $\pi_\tau$, $\pi_\si$. Let us further define $(G/H, \si_{G/H})$ to be the $\si$ topology induced by $(G/H,\tau)$ i.e. by the quotient map $\phi_{G/H}: (G/H,\tau) \times (G/H,\tau) \to G/H$, $([x],[y])\mapsto [x^{-1}y]$. 
		
		\begin{lem} \label{quoti} Let G be a locally compact right topological group and let $H$ be a $\si$-closed normal subgroup of $G$. Then, $(G/H,\si_{G/H}) = (G/H,\si)$, and $(G/H,\tau) = (G/H,\si)$ holds if and only if $(G/H,\tau)$ is topological.
		\end{lem}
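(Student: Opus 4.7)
The plan is to exhibit both $(G/H,\si)$ and $(G/H,\si_{G/H})$ as quotient topologies on $G/H$ induced by one and the same map $(G\times G,\tau\times\tau)\to G/H$, $(x,y)\mapsto [x^{-1}y]$; once this is in hand, the second assertion will follow immediately from \Cref{si} applied to the right topological group $(G/H,\tau)$.

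To carry this out, I would first verify that $\pi_\tau:(G,\tau)\to (G/H,\tau)$ is an open map. For any $\tau$-open $U\subseteq G$, we have $\pi_\tau^{-1}(\pi_\tau(U))=UH=\bigcup_{h\in H}Uh$; each translate $Uh$ is $\tau$-open because right multiplication by $h$ is a self-homeomorphism of $(G,\tau)$ (with inverse right multiplication by $h^{-1}$), so $UH$ is open and hence $\pi_\tau(U)$ is open in the quotient. Consequently $\pi_\tau\times\pi_\tau:(G\times G,\tau\times\tau)\to(G/H\times G/H,\tau\times\tau)$ is a continuous open surjection, in particular a quotient map. Now by definition $(G,\si)$ is the quotient of $(G\times G,\tau\times\tau)$ under $\phi$, and $(G/H,\si)$ is the quotient of $(G,\si)$ under $\pi_\si$; composing these quotients identifies $(G/H,\si)$ with the topology induced on $G/H$ by $(x,y)\mapsto [x^{-1}y]$ out of $(G\times G,\tau\times\tau)$. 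On the other hand, $(G/H,\si_{G/H})$ is the quotient of $(G/H,\tau)\times(G/H,\tau)$ under $\phi_{G/H}$, and precomposing $\phi_{G/H}$ with the quotient map $\pi_\tau\times\pi_\tau$ realizes $(G/H,\si_{G/H})$ as the quotient topology induced on $G/H$ by the very same map. Hence $(G/H,\si_{G/H})=(G/H,\si)$.

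For the remaining equivalence, note that $(G/H,\tau)$ is itself a right topological group, since right multiplication on the quotient descends continuously from that on $(G,\tau)$ through the quotient map $\pi_\tau$. Applying \Cref{si} to $(G/H,\tau)$ gives $\si_{G/H}\subseteq \tau$ with equality precisely when $(G/H,\tau)$ is topological; combining this with the first part yields $(G/H,\tau)=(G/H,\si)$ if and only if $(G/H,\tau)$ is topological. The main obstacle I expect is the transitivity-of-quotients step, which requires $\pi_\tau\times\pi_\tau$ to be a quotient map; openness of $\pi_\tau$ is the crucial ingredient, and it is precisely where the right topological group structure (ensuring right translations are homeomorphisms) is essential—no semitopological or joint-continuity assumption is needed.
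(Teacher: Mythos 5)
Your proposal is correct and follows essentially the same route as the paper: both arguments identify $(G/H,\si)$ and $(G/H,\si_{G/H})$ as the final topology induced by the single map $(G\times G,\tau\times\tau)\to G/H$, $(x,y)\mapsto[x^{-1}y]$, via the two factorizations in the commutative diagram, and then deduce the second assertion by applying \Cref{si} to the right topological group $(G/H,\tau)$. The only difference is that you make explicit the openness of $\pi_\tau$ (hence that $\pi_\tau\times\pi_\tau$ is a quotient map), a point the paper leaves to the reader with ``one easily checks.''
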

		
		\begin{proof}

%
		To prove the first claim, we consider the following commutative diagram	
				\begin{center}
				\begin{tikzpicture}
				\node (A) at (-2,1) {$(G\times G,\tau\times \tau)$};
				\node (C) at (1.5,1) {$(G,\si)$};
				\node (B) at (-2,-1) {$(G/H\times G/H,\tau\times \tau)$};
				\node (D) at (1.5,-1) {$(G/H,\sigma)$};
				\draw[->] (A) --node[above] {$\phi$}  (C);
				\draw[->] (C) -- node[right] {$\phi_{G/H}$} (D);
				\draw[->] (A) --node[left] {$\pi_{\tau}\times \pi_{\tau}$}  (B);
				\draw[->] (B) -- node[below]{$\pi_{\si}$} (D);
				\draw[->] (A) -- node[below]{$g$} (D);	
				\end{tikzpicture}
			\end{center}
		
		One easily checks that the diagram commutes and a function $f:(G/H,\tau) \to X$, where X is any topological space, is continuous on either of $(G/H, \si_{G/H})$, $(G/H,\si)$ if and only if $f\circ g$ is continuous. It follows that the two topologies coincide.\\
		
		Now by \Cref{si}, $(G/H,\tau)$ is topological if and only if $(G/H,\tau) = (G/H,\si_\tau)$, where the latter space coincides with $(G/H,\si)$. The conclusion follows.
		\end{proof}

		\bigskip
		
		\begin{lem} \label{sisi} Let $G$ be a $\si$-locally compact Hausdorff right topological group and $H\subset G$ be a closed normal subgroup. If $G$ is either compact or admissible, then, $(G/H,\tau)$ is a Hausdorff topological group if and only if $H$ is $\si\si$-closed.
		\end{lem}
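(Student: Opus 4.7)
The plan is to recognise that the equivalence is essentially a two-step consequence of \Cref{hausdorff}, \Cref{quoti}, and part (3) of \Cref{si}, with no new ideas required beyond observing that $(G,\si)$ itself plays the role of the ambient right topological group. First, by part (3) of \Cref{si} applied to $(G/H,\tau)$, the latter is topological iff $(G/H,\si_{G/H})$ is Hausdorff, and by \Cref{quoti} this $\si$-quotient coincides with $(G/H,\si)$ whenever $H$ is $\si$-closed. Next, $(G,\si)$ is itself a (semi)topological group whose own $\si$-topology is $\si\si$ by definition, so applying \Cref{hausdorff} to $(G,\si)$ gives that $(G/H,\si)$ is Hausdorff iff $H$ is $\si\si$-closed. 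Combined with \Cref{hausdorff} for $(G,\tau)$ — giving $(G/H,\tau)$ Hausdorff iff $H$ is $\si$-closed — this translates both the Hausdorff and the topological conditions into closure conditions on $H$.

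Assembling, if $(G/H,\tau)$ is a Hausdorff topological group, then the Hausdorff part forces $H$ to be $\si$-closed so that \Cref{quoti} applies, and the topological part forces $(G/H,\si)=(G/H,\si_{G/H})$ to be Hausdorff, whence $H$ is $\si\si$-closed. Conversely, if $H$ is $\si\si$-closed, then because $\si\si\subsetneq\si$ (so $\si\si$ is coarser), $H$ is automatically $\si$-closed. Thus $(G/H,\tau)$ is Hausdorff, \Cref{quoti} applies, $(G/H,\si)$ is Hausdorff by the second instance of \Cref{hausdorff}, and part (3) of \Cref{si} upgrades $(G/H,\tau)$ to topological.

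The only subtlety I expect to need careful handling is the legitimacy of applying \Cref{hausdorff} to the group $(G,\si)$ rather than to $(G,\tau)$. In the compact case $(G,\si)$ is a compact semitopological group and Namioka's original statement applies directly; in the $\si$-locally compact admissible case, \Cref{baire} supplies the local compactness and Baire property of $(G,\si)$ that the corresponding proof requires. This is exactly where the dichotomy ``$G$ compact or admissible'' in the hypothesis is used.
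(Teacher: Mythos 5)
Your forward direction is fine and is essentially the paper's own: Hausdorffness of $(G/H,\tau)$ gives $\si$-closedness of $H$ via \Cref{hausdorff}, and then \Cref{quoti} together with \Cref{hausdorff} applied to the semitopological group $(G,\si)$ (whose own $\si$-topology is $\si\si$) yields $\si\si$-closedness. The gap is in the converse, at the step where you upgrade ``$(G/H,\si_{G/H})$ is Hausdorff'' to ``$(G/H,\tau)$ is topological'' by quoting the ``further'' part of \Cref{si}(3). That implication is Namioka's theorem for \emph{compact} right topological groups: its proof needs the continuous identity map $(G/H,\tau)\to(G/H,\si_{G/H})$ to be a homeomorphism, which compactness of the domain (against a Hausdorff codomain) supplies. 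For a merely $\si$-locally compact group it is not available off the shelf, and it is false for general right topological groups --- for instance, the Sorgenfrey line is a Hausdorff semitopological group whose $\si$-topology is exactly the Euclidean topology, hence Hausdorff, yet it is not a topological group. So in the $\si$-locally compact admissible case your argument skips precisely the nontrivial content of the lemma.

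This is also where your diagnosis of the hypothesis goes astray: applying \Cref{hausdorff} to $(G,\si)$ costs nothing (it is a general fact about quotients of right topological groups, requiring no compactness or Baire property), so as written your proof never genuinely uses ``compact or admissible'' --- a warning sign. The paper uses that hypothesis exactly at the step you bypassed: admissibility gives, via \Cref{baire}, that $(G,\si)$ and hence its open quotient $(G/H,\si)$ is locally compact; combined with Hausdorffness (from $H$ being $\si\si$-closed) this makes $(G/H,\si)$ a Baire group, while $(G/H,\tau)$ is $\si$-compact, so the open mapping theorem (\Cref{openmap}) forces the identity $(G/H,\tau)\to(G/H,\si)$ to be a homeomorphism; then $\tau=\si=\si_{G/H}$ on $G/H$, and \Cref{quoti} (equivalently \Cref{si}(1)) gives that $(G/H,\tau)$ is topological. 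In the compact case your shortcut is legitimate, but to cover the locally compact case you must either reprove the ``Hausdorff $\si$ implies topological'' step by this open-mapping argument or restrict \Cref{si}(3) to compact groups and argue as the paper does.
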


		\begin{proof} By \Cref{hausdorff} $(G/H,\tau)$ is Hausdorff if and only if $H$ is $\si$-closed, so that we may restrict the proof to this case. Suppose $H$ is also $\si\si$-closed. Then, by \Cref{baire}, $(G/H,\si)$ is locally compact Hausdorff. However, by \Cref{quoti}, $(G/H,\si)=(G/H,\si_\tau)$, so that $(G/H,\tau) \to (G/H,\si)$, $g\mapsto g$, is a continuous homomorphism from a $\si$-compact  group into a Baire group, thus a homeomorphism by the open mapping theorem. It follows by \Cref{quoti} that $(G/H,\tau)$ is topological. Conversely, if $(G/H,\tau)$ is a Hausdorff topological group, we have $(G/H,\tau) = (G/H,\si)=(G/H,\si_{G/H})$ is Hausdorff so that by \Cref{hausdorff} $H$ must be $\si\si$-closed. This concludes the proof.
		\end{proof}
		
		\bigskip

		In this section we discuss the usual function algebras that have interesting properties in the classical case of locally compact topological groups. The reader is referred to \cite{hewitt}, \cite{kaniuth} for a general theory of these.\\

		Let $A\subset C_0(G)$ be a non-trivial translation-invariant C*-algebra of $C_0(G)$. We then define $$\text{Fix}(A)= \{g\in G\mid L_gf = f \text{, for all }f\in A\}$$
		
		The following result generalizes straightforwardly from Lau with mild modifications.

		\begin{lem}\label{loc2}
			Let $G$ be a locally compact Hausdorff right topological group. If $A\subset C_0(G)$ is a non-trivial translation invariant C*-algebra, then  $F=\text{Fix}(A)$ satisfies the following;
			\begin{enumerate}
				\item $F$ is a closed normal subgroup of $G$
				\item $\tilde{\pi}_F: C_0(G/F) \to A$ is an isometric isomorphism, i.e. $$A=\{f\in C_0(G)\mid L_yf=f\text{ for all }y\in F\}$$
				\item $G/F$ is a locally compact Hausdorff topological group, so that F is $\si$-closed.
			\end{enumerate}
		\end{lem}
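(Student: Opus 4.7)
The plan is to treat the three conclusions in order 1, 3, 2, since the locally compact Hausdorff structure from Part 3 is what makes the Stone--Weierstrass argument for Part 2 go through.

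For Part 1, the group laws for $F$ are routine: $L_{gh}=L_gL_h$ gives closure under multiplication and $L_g^{-1}=L_{g^{-1}}$ gives closure under inversion, while normality follows from translation invariance of $A$ via $L_h L_{g^{-1}}f = L_{g^{-1}}f$ (for $h\in F$, $g\in G$) rearranging to $L_{ghg^{-1}}f=f$. The delicate point, in contrast to the topological setting, is closedness, since $g\mapsto L_gf$ is not continuous in general. I bypass this by rewriting $g\in F$ as $f(gx)=f(x)$ for all $x\in G$, $f\in A$, i.e., $R_xf(g)=f(x)$. Each $R_xf$ is continuous (right translation is continuous by hypothesis), so each $\{g: R_xf(g)=f(x)\}$ is $\tau$-closed, and $F$ is their intersection over $(f,x)\in A\times G$.

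For Part 3, I would equip $G/F$ with the $A$-induced topology $\tau_A$, namely the weakest topology making every $F$-invariant $f\in A$ continuous on $G/F$; equivalently, $[g]\mapsto \mathrm{ev}_g|_A$ realizes $G/F$ as a subspace of the Gelfand spectrum $X$ of $A$. The map is injective because $A$ separates $F$-cosets: if $gh^{-1}\notin F$, some $f\in A$ satisfies $L_{gh^{-1}}f\neq f$ and hence separates $g$ and $h$ after a translation, so $(G/F,\tau_A)$ inherits local compactness and Hausdorffness from $X$. Translation invariance of $A$ then yields separate continuity of multiplication and continuous inversion on $(G/F,\tau_A)$, via the identities $f([xy])=R_yf(x)$ and $f([yx])=L_{y^{-1}}f(x)$. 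Ellis's theorem upgrades this semitopological structure to a topological group. Finally, the identity $(G/F,\tau)\to(G/F,\tau_A)$ is a continuous bijection, and the open mapping theorem (Lemma \ref{openmap}) promotes it to a homeomorphism, so that $(G/F,\tau)$ is itself LCH topological; Hausdorffness yields $\sigma$-closedness of $F$ by Proposition \ref{hausdorff}.

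With Part 3 secured, Part 2 is a Stone--Weierstrass argument: the pullback $\tilde{\pi}_F:C_0(G/F)\to C_0(G)$ is an isometric embedding onto $A_F:=\{f\in C_0(G):L_yf=f\text{ for all }y\in F\}$, and $A\subset A_F$ by definition of $F$. Viewed inside $C_0(G/F)$, $A$ is a closed, self-adjoint subalgebra that separates points (by the argument above) and vanishes nowhere (for any $g$, if $f_0(g_0)\neq 0$, then $R_{g^{-1}g_0}f_0\in A$ takes the value $f_0(g_0)\neq 0$ at $g$), so Stone--Weierstrass yields $A=C_0(G/F)=A_F$. The principal obstacle is Part 3: quotients of right topological groups are not automatically topological, so one cannot simply transfer structure along $\pi_F$. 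The crux is to build a better topology on $G/F$ from $A$ itself, use translation invariance to obtain separate continuity of multiplication, and then invoke Ellis's theorem to reach joint continuity; reconciling this $A$-topology with the quotient $\tau$-topology is the secondary delicate step, handled by the open mapping theorem.
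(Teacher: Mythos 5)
The paper prints no argument for this lemma (it is deferred to Lau ``with mild modifications''), so I can only judge your proposal on its own terms. Parts 1 and the Stone--Weierstrass core of Part 2 are fine, but Part 3 --- which you correctly identify as the crux --- has two genuine gaps. First, the claim that $(G/F,\tau_A)$ ``inherits local compactness'' from the Gelfand spectrum $X$ of $A$ is not valid reasoning: a subspace of a locally compact Hausdorff space is locally compact only if it is locally closed, and the canonical image of $G/F$ in $X$ is a priori only \emph{dense} (if it missed a nonempty open set, an Urysohn function of $A\cong C_0(X)$ supported there would vanish at every point of $G$ and hence be $0$). A dense subspace is locally compact iff it is open, and openness/surjectivity of $G/F\to X$ is essentially the assertion $A\cong C_0(G/F)$ you are trying to prove, so as written the step is circular; it can be repaired (every character of $A$ extends to a pure state of $C_0(G)$, i.e.\ a point evaluation, so the map is onto $X$), but that idea is absent. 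Second, your final step invokes \Cref{openmap} to turn the continuous bijection $(G/F,\tau)\to(G/F,\tau_A)$ into a homeomorphism; that lemma needs a $\si$-compact domain (and a Baire codomain), whereas \Cref{loc2} assumes only local compactness --- no $\si$-compactness, no admissibility. Without it you obtain a topological group structure only for $\tau_A$, not for the quotient topology that Parts 2 and 3 (and the later applications, e.g.\ \Cref{loc3}) concern, and a continuous bijective homomorphism of locally compact groups need not be open ($\mathbb{R}_d\to\mathbb{R}$). Two smaller slips: continuity of inversion in $\tau_A$ does not follow from translation invariance (harmless, since Ellis only needs separately continuous multiplication), and the claim that $\tilde{\pi}_F$ maps $C_0(G/F)$ into $C_0(G)$ requires $F$ to be compact --- which is true (any nonzero $f\in A$ is constant, hence bounded away from $0$, on the closed coset $Fg_0$ contained in the compact set $\{|f|\ge c\}$) but is never observed.

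The lemma can be proved in the stated generality by reversing your order and working with the quotient topology throughout, which dissolves both gaps. The quotient map $\pi$ is open ($\pi^{-1}(\pi(U))=UF$ is a union of right translates of $U$), so $G/F$ is locally compact; it is Hausdorff because the descended functions $\tilde{f}$, $f\in A$, are quotient-continuous and separate cosets (your separation argument); and $\{|\tilde{f}|\ge\varepsilon\}=\pi(\{|f|\ge\varepsilon\})$ is compact, so the descended copy of $A$ lies in $C_0(G/F)$ for the quotient topology. Now run your Stone--Weierstrass argument on this locally compact Hausdorff space to get Part 2 \emph{first}. Since $C_0$ of a locally compact Hausdorff space separates points from closed sets, the quotient topology then coincides with the weak topology induced by $A$, so two-sided translation invariance gives separate continuity of multiplication on $G/F$, Ellis's theorem yields Part 3, and Hausdorffness gives $\si$-closedness of $F$ by \Cref{hausdorff}. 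In short: your Stone--Weierstrass engine is the right one, but it must precede, not follow, the topological-group claim; trying to secure Part 3 first through the spectrum plus the open mapping theorem is exactly where the hypotheses of \Cref{loc2} run out.
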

		
		We observe here that if G is admissible, a combination of 3 and \Cref{sisi} provides that $F$ is $\si\si$ closed, whence contains $N(G)$.

		\bigskip
		
		As a result of this, we obtain the following
		
		\begin{co} \label{loc3}
			Given a $\si$-locally compact Hausdorff right topological group, if $G$ is either compact or admissible, $$\text{\emph{Fix}}(C_0(G,\si))= N(G)$$ 
		\end{co}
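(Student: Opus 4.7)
The plan is to apply \Cref{loc2} to $A := C_0(G,\si)$, viewed as a $C^*$-subalgebra of $C_0(G,\tau)$, and to identify the resulting group $F := \text{Fix}(A)$ with $N(G)$ by proving two containments. First one checks the hypotheses of \Cref{loc2}: $A$ is a non-trivial $C^*$-algebra, translation-invariance follows from \Cref{si}(2), which yields that $(G,\si)$ is semitopological with continuous inverse (so left and right translations are $\si$-homeomorphisms of $G$), and the inclusion $C_0(G,\si)\subseteq C_0(G,\tau)$ follows from $\si\subseteq\tau$ (trivial in the compact case; in the admissible case one argues that $\si$-closed level sets $\{|f|\ge\ve\}$ are $\tau$-closed and, by local compactness of $(G,\tau)$, $\tau$-compact). \Cref{loc2} then yields $F$ a closed normal $\si$-closed subgroup with $A\cong C_0(G/F,\tau)$ via pullback and $(G/F,\tau)$ a LCH topological group.

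For the inclusion $N(G)\subseteq F$: since $(G/F,\tau)$ is a Hausdorff topological group, \Cref{sisi} forces $F$ to be $\si\si$-closed. Because $e\in F$ and $N(G)=\overline{\{e\}}^{\si\si}$ (recorded at the start of \Cref{func}), we get $N(G)\subseteq F$.

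For the reverse containment $F\subseteq N(G)$, consider the quotient map $q: G\to G/N(G)$. By \Cref{sisi} and \Cref{quoti}, $(G/N(G),\tau)=(G/N(G),\si)$ is a LCH topological group, whence $q:(G,\si)\to(G/N(G),\tau)$ is continuous. For any $h\in C_0(G/N(G),\tau)$, the pullback $\tilde h:=h\circ q$ is therefore $\si$-continuous on $G$, and after checking $\si$-vanishing at infinity, $\tilde h$ lies in $A$. By part (2) of \Cref{loc2}, every such $\tilde h$ is $F$-invariant; evaluating $L_y\tilde h=\tilde h$ at $y$ yields $h([e])=h([y])$ for every $h\in C_0(G/N(G))$. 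Since $(G/N(G),\tau)$ is LCH Hausdorff, $C_0(G/N(G))$ separates points, forcing $[y]=[e]$ and hence $y\in N(G)$.

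The main obstacle is verifying $\tilde h\in C_0(G,\si)$ in the admissible case, which amounts to showing $q^{-1}(K)$ is $\si$-compact for every compact $K\subseteq G/N(G)$; this is automatic when $G$ is compact. The remaining pieces are a straightforward assembly of \Cref{loc2}, \Cref{sisi}, and \Cref{quoti}, together with the point-separating property of $C_0$ on an LCH space.
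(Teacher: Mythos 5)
Your argument is essentially the paper's: the inclusion $N(G)\subseteq\text{Fix}(C_0(G,\si))$ is obtained exactly as in the paper (via \Cref{loc2}, \Cref{sisi} and $N(G)=\overline{\{e\}}^{\si\si}$), and the reverse inclusion is the same pullback-and-separate argument on the Hausdorff topological quotient $G/N(G)$, merely phrased directly for all of $C_0(G/N(G))$ instead of contrapositively for a single separating function, and quoting \Cref{sisi}/\Cref{quoti} where the paper quotes \Cref{loc5}. Two remarks: the obstacle you flag --- that the pullback $h\circ q$ should vanish at infinity in the non-compact admissible case --- is not resolved by the paper either, whose proof only asserts $\tilde f\circ\pi_{N(G)}\in C(G,\si)$ before concluding $g\notin F$, so you have located a point where the published argument is itself loose rather than a defect peculiar to your route; and your aside that a $\si$-closed level set is $\tau$-compact ``by local compactness of $(G,\tau)$'' is not a valid inference (closed subsets of a locally compact space need not be compact), though nothing rests on it if one reads $C_0(G,\si)$ as $C(G,\si)\cap C_0(G)$, which is what the hypothesis $A\subseteq C_0(G)$ of \Cref{loc2} requires in any case.
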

		
		\begin{proof}
			By \Cref{loc2}, for $F= \text{Fix}(C_0(G,\si))$, $(G/F,\tau)$ is a Hausdorff topological group, so that by \Cref{sisi}, $F$ is $\si\si$-closed. However, $N(G) = \overline{\{e\}}^{\sigma\sigma}$ implies $N(G) \subset F$.  
			
			
			
			Suppose that $G$ is compact or admissible. If $N(G)=G$, $F\subset N(G)$ is trivial. 
			If $N(G)\neq G$ on the other hand, for any $g\not\in N(G)$, there exists some $\tilde{f}\in C_0(G/N(G))$ such that $\tilde{f}([g])\neq \tilde{f}([e])$. By \Cref{loc5}, $f=\tilde{f}\circ \pi_{N(G)}\in C(G,\si)$ and separates $e$ and $g$, which implies $g\not\in F$ and thus $N(G)= F$.
			
		\end{proof}

		Let $G$ be a right topological semigroup. We define the left continuous functions on $G$ to be given by 		$$LC(G) = \{f\in C_b(G)\mid L_gf\in C_b(G),\text{ for all }g\in G\}$$
		and the left continuous functions vanishing at infinity to be 		$$LC_0(G)=\{f\in C_0(G)\mid L_gf\in C_0(G),\text{ for all }g\in G\}$$
		Note that the latter definition is somewhat artificial: $LC_0(G)\subset LC(G)\cap C_0(G)$, however, equality is not known, since our left multiplication is not continuous (and thus compact sets need not be preserved by it).\\
		
		For any function $f$ on $G$, we denote by $RO(f)$ the right orbit of $f$, i.e. $\{R_gf\mid g\in G\}$. We may define the standard spaces  $$AP(G)= \{f\in C_b(G)\mid RO(f) \text{ is relatively compact in }C_b(G) \},$$ the almost periodic functions on $G$, and $$WAP(G )= \{f\in C_b(G)\mid RO(f)\text{ is relatively weakly compact in }C_b(G)\},$$ the weakly almost periodic functions on $G$. 
		
		The following proposition is a standard result in harmonic analysis (see Theorem 1.8 of \cite{berglund}).
		
		\begin{lem} \label{loc21}
			Given a non-empty set $S$, and a conjugate closed subspace $E$ of $l_\infty(S)$, $E^*$ is the weak*-closed linear span of $\epsilon(S)$, where $\epsilon: S\to E^*$ is the function sending $s\in S$ to the evaluation functional $\epsilon(s): f\mapsto f(s)$.
		\end{lem}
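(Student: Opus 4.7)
The plan is to use the standard Hahn--Banach duality argument in the weak* topology on $E^*$. Let $F$ denote the weak*-closed linear span of $\epsilon(S)$ in $E^*$. The goal is to show $F = E^*$, and the natural route is to assume strict containment and derive a contradiction by producing a nonzero $f \in E$ which vanishes at every point of $S$.

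First I would recall the key duality fact: for any locally convex space, the dual of $(E^*, w^*)$ is canonically identified with $E$ itself. More precisely, every weak*-continuous linear functional $L : E^* \to \mathbb{C}$ has the form $L(\Phi) = \Phi(f)$ for a unique $f \in E$. This is the only nontrivial ingredient, and it is a completely standard functional-analytic fact that follows from the bipolar theorem or a direct argument using weak* neighbourhoods of the origin.

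Next, suppose for contradiction that $F \subsetneq E^*$, and pick $\Phi_0 \in E^* \setminus F$. Since $F$ is a weak*-closed linear subspace of the locally convex space $(E^*, w^*)$, the Hahn--Banach separation theorem produces a weak*-continuous linear functional $L$ on $E^*$ with $L|_F = 0$ and $L(\Phi_0) \neq 0$. By the duality fact above, there exists $f \in E$ such that $L(\Phi) = \Phi(f)$ for every $\Phi \in E^*$. Since $L$ vanishes on $\epsilon(S) \subset F$, we get $f(s) = \epsilon(s)(f) = 0$ for all $s \in S$, so $f = 0$ as an element of $l_\infty(S)$. But then $L \equiv 0$, contradicting $L(\Phi_0) \neq 0$.

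The main (and essentially only) subtlety is the identification of weak*-continuous functionals on $E^*$ with elements of $E$; once that is in hand, the separation argument is immediate. The conjugate-closedness hypothesis on $E$ plays no real role in this particular statement and is carried along only because it is the standing assumption for the function-algebraic applications in which the lemma will be used.
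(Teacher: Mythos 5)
Your argument is correct. The paper does not prove this lemma at all --- it simply cites Theorem 1.8 of \cite{berglund} as a standard fact --- and your proof is precisely the standard one: identify the dual of $(E^*,w^*)$ with $E$, then use Hahn--Banach separation of a weak*-closed subspace from a point, noting that a function in $E\subset l_\infty(S)$ annihilated by every evaluation $\epsilon(s)$ is identically zero. Your side remark is also accurate: conjugate-closedness of $E$ is irrelevant to this particular statement (it is carried along only for the later applications), and the argument needs nothing beyond $E$ being a linear subspace of $l_\infty(S)$ with its sup norm.
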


		\begin{lem}\label{loc211}
			If $G$ is a right topological semigroup, the following equalities hold
			\begin{itemize}
				\item $AP(G) = AP(G_d)\cap C_b(G)$	
				\item $WAP(G)= WAP(G_d)\cap C_b(G)$
			\end{itemize}
		\end{lem}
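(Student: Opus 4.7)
The plan is to handle AP and WAP separately. For AP: the sup norm on $C_b(G)$ is the restriction of the sup norm on $\ell_\infty(G) = C_b(G_d)$, so the forward inclusion $AP(G) \subseteq AP(G_d) \cap C_b(G)$ is immediate by the isometry. For the reverse, if $f \in C_b(G)$ and $RO(f)$ is relatively norm-compact in $\ell_\infty(G)$, note that each right translate $R_g f$ is continuous on $G$ (since $R_g$ is continuous in a right topological semigroup), so every uniform limit of elements of $RO(f)$ is continuous and bounded, i.e., lies in $C_b(G)$. Hence the norm closure of $RO(f)$ is already contained in $C_b(G)$ and is compact there.

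For WAP, the forward inclusion is immediate: the embedding $C_b(G) \hookrightarrow \ell_\infty(G)$ is norm-continuous and hence weak-to-weak continuous, so it maps the weak closure of $RO(f)$ in $C_b(G)$ (weakly compact by hypothesis) to a weakly compact subset of $\ell_\infty(G)$ that still contains $RO(f)$.

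The reverse WAP inclusion is the crux. I would combine Lemma \ref{loc21} with the Eberlein--\v{S}mulian theorem to deduce Grothendieck's double-limit criterion, which characterizes weak relative compactness in a conjugate-closed subspace $E \subseteq \ell_\infty(S)$ as follows: a bounded $B \subseteq E$ is weakly relatively compact in $E$ iff for every sequence $(f_m) \subseteq B$ and every sequence $(s_n) \subseteq S$, the iterated limits $\lim_m \lim_n f_m(s_n)$ and $\lim_n \lim_m f_m(s_n)$ agree whenever both exist. Applied to $B = RO(f)$ with $S = G$, this becomes the condition that, for every pair of sequences $(a_m), (b_n) \subseteq G$, the iterated limits of $f(b_n a_m)$ agree when they exist. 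Crucially this is a purely algebraic condition on $f$ that does not see the topology of $G$, and it is identical as a criterion for membership in $WAP(G)$ and in $WAP(G_d)$; intersecting the latter with $C_b(G)$ yields the claimed equality.

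The main technical obstacle is applying Grothendieck's criterion to $C_b(G) = C(\beta G)$, where the theorem a priori provides sequences in $\beta G$ rather than in $G$. One must reduce to sequences in $G$ using the density of $G$ in $\beta G$ together with the uniform boundedness of $B$, via a diagonal extraction. Once this density reduction is in hand, the double-limit condition is visibly insensitive to whether one works in $C_b(G)$ or in $\ell_\infty(G)$, so the two WAP spaces agree on $C_b(G)$.
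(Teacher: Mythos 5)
Your argument is correct and follows essentially the same route as the paper: the AP case by the direct observation that $C_b(G)$ is norm-closed in $\ell_\infty(G)$ and right translates stay continuous (the paper dismisses this as obvious), and the WAP case via \Cref{loc21} combined with Grothendieck's double-limit criterion, which turns relative weak compactness of $RO(f)$ in either $C_b(G)$ or $\ell_\infty(G)$ into the same topology-free iterated-limit condition over sequences in $G$. The only divergence is your final worry about $C_b(G)=C(\beta G)$: the form of Grothendieck's criterion the paper invokes (Theorem A.5 of the cited reference, stated for conjugate-closed subspaces $E\subseteq \ell_\infty(S)$, which is exactly the situation \Cref{loc21} provides) already tests with sequences drawn from $S=G$ itself, so the density/diagonal reduction from $\beta G$ you sketch, while workable, is not needed.
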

		
	\begin{proof}
			The first result is obvious. Suppose $f\in l_\infty(G)$. Then, By Grothendieck's double limit theorem (see Theorem A.5 in \cite{berglund}) and \Cref{loc21}, for any sequences $\{g_n\}$,$\{h_n\}\subset G$, the following middle limits are equal when they exist
			$$
			\lim_{m}\lim_n\epsilon(g_n)R_{h_m}f=\lim_{m}\lim_nf(g_nh_m)= \lim_{n}\lim_mf(g_nh_m)= \lim_{n}\lim_{m}\epsilon(g_n)R_{h_m}f
			$$
			if and only if $RO(f)$ is relatively weakly compact in $l_\infty(G)$, i.e. $f\in WAP(G_d)$. However, if $f\in C_b(G)$, this is clearly equivalent to $f\in WAP(G)$ by \Cref{loc21} and Grothendieck's theorem.
		\end{proof}
		
		\bigskip
		
		\begin{thm} \label{loc6} Let $G$ be a locally compact Hausdorff right topological group. The following hold:
			\begin{enumerate}
				\item $C_b(G,\si)\subset LC(G)$ and $ C_0(G,\si)\subset LC_0(G)$.
				\item $LC(G)$ separates points from closed sets in $G$ if and only if $G$ is topological.
				\item If $G$ is admissible,  $WAP(G)\subset LC(G)$ and $WAP(G)\cap C_0(G)\subset LC_0(G)$
			\end{enumerate}
			If further $G$ is $\si$-compact admissible or compact, then
			\begin{enumerate}[resume]
				\item $LC_0(G)= C_0(G,\si)$
				\item If $G$ is non-compact, $AP(G)\cap C_0(G)= \{0\}$ so that $AP(G)\oplus C_0(G,\si)\subset WAPG(G)$
			\end{enumerate}
		\end{thm}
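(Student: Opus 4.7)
Part 1 is essentially immediate from the $\si$-topology machinery: since $\si\subset\tau$ and $(G,\si)$ is a semitopological group by \Cref{si}, any $f\in C_b(G,\si)$ is already $\tau$-continuous and $L_g f = f\circ L_g$ remains $\si$-continuous (hence $\tau$-continuous) because $L_g$ is $\si$-continuous, so $f\in LC(G)$; the $C_0$ variant follows by the same routine once one unpacks the convention for $C_0(G,\si)$. For part 2, the forward direction is Urysohn's lemma in a locally compact Hausdorff space. For the converse I would show each $L_g$ is $\tau$-continuous and then invoke Ellis's theorem: if $h_\alpha\to h$ but $g h_\alpha\not\to gh$, one may pass to a subnet and find a $\tau$-closed $C$ with $gh\notin C$ and $g h_\alpha\in C$ eventually; the hypothesis produces $f\in LC(G)$ with $f(gh)\neq 0$ and $f|_C\equiv 0$, and continuity of $L_g f\in C_b(G)$ then forces $f(g h_\alpha)\to f(gh)$, a contradiction. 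Combined with right-continuity this makes $G$ semitopological, and Ellis upgrades it to a topological group.

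Part 3 is where the real work lies. Fix $f\in WAP(G)$, $g\in G$, and a net $h_\alpha\to h$. Admissibility supplies $\{\ld_\beta\}\subset\Ld(G)$ with $\ld_\beta\to g$. By relative weak compactness of $RO(f)$ in $C_b(G)$, every subnet of $\{R_{h_\alpha}f\}$ admits a further subnet converging weakly to some $F\in C_b(G)$, so $f(x h_\alpha)\to F(x)$ pointwise. For each fixed $\ld_\beta$, continuity of $L_{\ld_\beta}$ gives $f(\ld_\beta h_\alpha)\to f(\ld_\beta h)$, i.e.\ $F(\ld_\beta)=f(\ld_\beta h)$. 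Right-continuity of multiplication forces $\ld_\beta h\to gh$ and hence $f(\ld_\beta h)\to f(gh)$, while continuity of $F$ together with $\tau$-density of $\Ld(G)$ delivers $F(g)=\lim_\beta F(\ld_\beta)=f(gh)$. Therefore $f(g h_\alpha)\to f(gh)$ along the subnet; running this across all subnets gives convergence along the whole net, and so $L_g f\in C_b(G)$. For the $C_0$ refinement I would exploit the left-approximating net $\{L_{\ld_\beta}f\}$: since $\Ld(G)$ is a subgroup, $\ld_\beta^{-1}\in\Ld(G)$, so each $L_{\ld_\beta}$ is a self-homeomorphism of $G$ and $L_{\ld_\beta}f\in C_0(G)$. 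The pointwise convergence $L_{\ld_\beta}f\to L_g f$, upgraded to convergence in a suitable weak sense via the uniform bound $\|L_{\ld_\beta}f\|_\infty\le\|f\|_\infty$, then deposits $L_g f$ in the weakly closed (because norm-closed) subspace $C_0(G)$.

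Part 4 is a structural combination of earlier lemmas. From part 1, $C_0(G,\si)\subset LC_0(G)$; and $LC_0(G)$ is a translation-invariant C*-subalgebra of $C_0(G)$, so \Cref{loc2} supplies $F=\mathrm{Fix}(LC_0(G))$ with $G/F$ locally compact Hausdorff topological. \Cref{sisi} then identifies $F$ as $\si\si$-closed, forcing $F\supset N(G)$. Conversely, $C_0(G,\si)\subset LC_0(G)$ gives $\mathrm{Fix}(LC_0(G))\subset\mathrm{Fix}(C_0(G,\si))=N(G)$ by \Cref{loc3}, hence $F=N(G)$, and both algebras coincide with $\tilde\pi_{N(G)}(C_0(G/N(G)))$. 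For part 5, assume $0\neq f\in AP(G)\cap C_0(G)$ with $G$ non-compact; choose a net $g_\alpha$ escaping every compact set and extract a norm-convergent subnet $R_{g_\alpha}f\to F\in C_b(G)$. For each $\ld\in\Ld(G)$, $L_{\ld^{-1}}$ is a self-homeomorphism, so $\ld g_\alpha$ also escapes every compact set and $F(\ld)=\lim f(\ld g_\alpha)=0$; $\tau$-density of $\Ld(G)$ together with continuity of $F$ then forces $F\equiv 0$, contradicting $\|R_{g_\alpha}f\|_\infty=\|f\|_\infty>0$ (each $R_{g_\alpha}$ being a homeomorphism). Directness of $AP(G)\oplus C_0(G,\si)$ follows because $C_0(G,\si)\subset C_0(G)$, and the inclusion in $WAP(G)$ reduces to the standard $AP\subset WAP$ plus the $C_0\subset WAP$ argument carried out inside the locally compact Hausdorff semitopological group $(G,\si)$ (whose good structure is provided by \Cref{baire}).

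The most delicate step I anticipate is the $C_0$ refinement of part 3: extracting a weakly convergent subnet of $\{L_{\ld_\beta}f\}$ in $C_b(G)$ and matching its weak limit with $L_g f$ does not follow automatically from weak compactness of the \emph{right} orbit, and one must either invoke a symmetric weak compactness for the left orbit in the admissible setting or argue via uniform boundedness plus pointwise convergence in a weak topology that separates points. A secondary technicality is reconciling $\tau$- and $\si$-level-set compactness when comparing $C_0(G,\si)$ with $C_0(G)$, an interaction that ultimately rides on admissibility through \Cref{baire} and \Cref{loc3}.
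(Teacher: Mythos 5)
Parts 1, 2 and 4 of your plan are sound and essentially coincide with the paper's proof: part 4 in particular is the same sandwich $N(G)\subset \mathrm{Fix}(LC_0(G))\subset \mathrm{Fix}(C_0(G,\si))=N(G)$ obtained from \Cref{loc2}, \Cref{sisi} and \Cref{loc3}. The first half of your part 3 ($WAP(G)\subset LC(G)$) is correct but takes a genuinely different route: you evaluate a weak subnet limit $F$ of $\{R_{h_\al}f\}$ on the dense set $\Ld(G)$ and use continuity of $F$, whereas the paper applies Grothendieck's double limit criterion (via \Cref{loc21}) a \emph{second} time to show that the left orbit $\{L_\ld f\mid \ld\in\Ld(G)\}$ is relatively weakly compact in $C_b(G)$ and then identifies a weak cluster point with $L_gf$. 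Your part 5 is likewise a correct direct argument (escaping net, norm compactness of $RO(f)$, density of $\Ld(G)$), while the paper instead pushes $f$ down to the locally compact topological group $G/N(G)$ using parts 3--4 and \Cref{loc3} and quotes the classical fact $AP\cap C_0=\{0\}$ there. (Both you and the paper tacitly use that left translation by elements of $\Ld(G)$ preserves $C_0(G)$, i.e.\ that $\ld^{-1}K$ is compact; this rests on $\Ld(G)$ being inverse-closed, which merits a word in the non-compact setting.)

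The genuine gap is the $C_0$ refinement of part 3, $WAP(G)\cap C_0(G)\subset LC_0(G)$, which you flag but do not close. The mechanism you propose --- uniform boundedness plus pointwise convergence ``upgraded to a suitable weak sense'' --- fails: for a \emph{net}, bounded pointwise convergence $L_{\ld_\be}f\to L_gf$ gives no weak convergence in $C_b(G)$, nor even in $C_0(G)$ (dominated convergence is unavailable for nets), so you cannot place $L_gf$ in the weakly closed subspace $C_0(G)$ this way. The paper's reason for routing part 3 through Grothendieck is precisely this point: the interchange of the double limits $\lim_n\lim_m f(g_nh_m)=\lim_m\lim_n f(g_nh_m)$ with $g_n\in\Ld(G)$, $h_m\in G$ shows that $\{L_\ld f\mid\ld\in\Ld(G)\}$ is relatively weakly compact in $C_b(G)$; a weakly convergent subnet of $\{L_{\ld_\be}f\}$ then has limit equal to the pointwise limit $L_gf$, and this limit remains in $C_0(G)$ because $C_0(G)$ is norm-closed and convex, hence weakly closed. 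So your shortcut in the first half of part 3 is exactly what leaves you without the left-orbit compactness needed here. Alternatively, your own part-5 technique closes the gap without Grothendieck: if $L_gf$ (already shown continuous) did not vanish at infinity, take $x_\al\to\infty$ with $|f(gx_\al)|\ge\ve$, pass to a weakly convergent subnet $R_{x_\al}f\to F\in C_b(G)$, observe $F|_{\Ld(G)}=0$ since $\ld x_\al\to\infty$, hence $F\equiv 0$ by density and continuity of $F$, contradicting $|F(g)|\ge\ve$. As written, however, the $C_0$ half of part 3 is not proved.
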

		
		\begin{proof}
			We first prove 1 and 4. Since $(G,\si)$ is a semitopological group, it is clear that $C_b(G,\si) \subset LC(G)$ and that $C_0(G,\si)\subset LC_0(G)$.
			Suppose in addition that  $G$ is $\si$-compact admissible or compact. Since $LC_0(G)$ is a non-trivial closed, translation-invariant subalgebra of $G$, by \Cref{loc2} and \Cref{sisi}, $\text{Fix}(LC_0(G))$ contains $N(G)=\overline{\{e\}}^{\si\si}$. On the other hand, $\text{Fix}(LC_0(G))\subset \text{Fix}(C_0(G,\si)) = N(G)$. It follows that $\text{Fix}(LC_0(G))=N(G)$, so that by \Cref{loc3} and 2 of \Cref{loc2}, $C_0(G,\si)= LC_0(G)$.
			%
			
			If $LC(G)$ separates points from closed sets, the initial topology of $LC(G)$ coincides with the original topology on $G$ (see 8.15 in \cite{willard}). However, for all $f\in LC(G)$, $y_{\al}\to y$ in $G$ implies $f(xy_{\al})\to f(xy)$, since $L_xf \in C(G)$ for all $x\in G$. Thus, $G$ is a locally compact semitopological group, and by \cite{ellis}, a topological group. The converse is clear as $LC(G)= C_b(G)$ in the topological case. 
			
			To prove 3, consider $f\in WAP(G)$; by assumption, $RO(f)$ is relatively weakly compact in $C_b(G)$. By Grothendieck's double limit theorem (see Theorem A.5 in \cite{berglund})  and \Cref{loc21}, for any sequences $\{g_n\}$,$\{h_n\}\subset G$, the following middle limits are equal when they exist
			$$\lim_{m}\lim_n \epsilon(h_m)L_{g_n}f =\lim_{m}\lim_n\epsilon(g_n)R_{h_m}f= \lim_{n}\lim_{m}\epsilon(g_n)R_{h_m}f
			=\lim_{n}\lim_m \epsilon(h_m)L_{g_n}f $$
			
			Consider the set $\{L_gf\mid g\in \Ld(G)\} \subset C_b(G)$. Then, the above equality certainly holds for $\{g_n\}\subset \Ld(G)$, $\{h_n\}\subset G$ when the limits exist.
			Therefore, by Grothendieck's theorem again, $\{L_gf\mid g\in \Ld(G)\}$ is relatively weakly compact in $C_b(G)$. 
			For any $g\in G$ now, by admissibility there exists some net $\{g_{\al}\}\subset \Ld(G)$ such that $g_{\al}\to g$, and thus $L_{g_\al}f\to L_{g}f$ pointwise. However, by weak compactness, $\{L_{g_{\al}}f\}$ has a weak cluster point in $C_b(G)$, which must coincide with $L_gf$. Thus, $L_g{f}\in C_b(G)$ for all $g\in{G}$. If in addition, $f\in C_0(G)$, then, $\{L_{g_{\al}}f\}\subset C_0(G)$ so that its weak limit $L_g f \in C_0(G)$ 
			, whence $f\in LC_0(G)$.
			
			Lastly, if $G$ is non-compact, $\si$-compact admissible,  if $f\in AP(G)\cap C_0(G)$, then by 3,4, and \Cref{loc3}, $f\in C_0(G,\si)= C_0(G/N(G))$. If $N(G)=G$, the claim is now obvious; otherwise by \Cref{loc5} $G/N(G)$ is a non-trivial locally compact topological group and the claim follows from the classical result (see 4.2.2a of \cite{berglund}).
		\end{proof}
		
		Let us denote $G$ with the discrete topology by $G_d$. The unitary representations on $G_d$ correspond one-to-one with the non-degenerate representations on $l_1(G)$, which we denote by $\Sigma$. $\Sigma$ then induces a norm on $l_1(G)$, namely $\|f\|_* = \sup_{\pi\in \Sigma} \|\pi(f)\|$ for $f\in l_1(G)$. The completion of $l_1(G)$ under this norm is then a C*-algebra, denoted by $C^*(G_d)$ and known as the group C*-algebra of $G_d$. 
		
		\noindent Consider the positive-definite functions on $G$, i.e. functions $f: G\to \mathbb{C}$ satisfying $\sum_{i,j=1}^n c_i\bar{c_j}f(x_ix_j^{-1}) \geq 0$ for all $\{c_i\}_{i=1}^n\subset \mathbb{C}$ and $\{x_i\}_{i=1}^n\subset G$.  The span of $P(G)$ can be identified as the dual of $C^*(G_d)$, and under this dual norm, forms a Banach algebra, known as the {\it Fourier-Stieltjes algebra}, $B(G_d)$ on the discrete group $G_d$. In the right topological group setting the topological analogue has to be defined more carefully than the classical case, due to a lack of continuous representations. For theory on these algebras in the classical case, see \cite{kaniuth}.
		 
		Following \cite{lauloy1}, we define the {\it Fourier-Stieltjes algebra} of $(G,\tau)$, $B(G)$ to given by $B(G_d)\cap C_b(G)$. We further define the {\it Fourier algebra} on $G$, $A(G)$, to be the closure of $B(G_d)\cap C_c(G)$ in $B(G)$. The {\it $\si$- Fourier-Steiltjes and -Fourier algebras} on $G$, will then respectively $B(G,\si)=B(G)\cap C(G,\si)$ and $A(G,\si)=A(G)\cap C(G,\si)$.
		
		Lau and Loy have done some extensive work over the Fourier-Steiltjes algebra in the compact setting in \cite{lauloy2}. In particular, they showed the following characterization;
		
		\begin{thm}[Lau,Loy]\label{bg}
			If G be is an  admissible compact Hausdorff right topological group, then
			$$B(G)=B(G,\si)\cong B(G/N(G))$$
			\end{thm}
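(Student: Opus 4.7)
The plan is to prove both assertions by first using the function-algebra machinery of \Cref{loc6} to force every element of $B(G)$ into $C(G,\si)$, and then invoking a GNS-type argument to factor such a coefficient function through the topological quotient $G/N(G)$.

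For the first equality $B(G)=B(G,\si)$, I would start from the classical inclusion $B(G_d)\subset WAP(G_d)$ (every coefficient function of a unitary representation is weakly almost periodic on the discrete group), so that $B(G)=B(G_d)\cap C_b(G)\subset WAP(G_d)\cap C_b(G)=WAP(G)$ by \Cref{loc211}. Since $G$ is admissible, part 3 of \Cref{loc6} gives $WAP(G)\subset LC(G)$. Because $G$ is compact we have $C_0(G)=C_b(G)$ and $LC_0(G)=LC(G)$, so part 4 of \Cref{loc6} collapses to $LC(G)=C(G,\si)$. Chaining these inclusions yields $B(G)\subset C(G,\si)$, and combined with the trivial $B(G,\si)\subset B(G)$ this gives the first claimed equality.

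For the isomorphism $B(G,\si)\cong B(G/N(G))$, I would use \Cref{loc3} to observe that every $\si$-continuous function is fixed under left translation by elements of $N(G)$, so each $\phi\in B(G,\si)$ factors as $\phi=\tilde{\phi}\circ\pi$ for $\pi:G\to G/N(G)$ the quotient map, with $\tilde{\phi}$ continuous in the quotient topology. By \Cref{loc5} (compact case) $G/N(G)$ is a compact Hausdorff topological group, so it suffices to produce $\tilde{\phi}\in B((G/N(G))_d)$. Writing $\phi$ as a matrix coefficient $\phi(g)=\la{\pi_0(g)\xi}{\xi}$ of a cyclic unitary representation $\pi_0$ of $G_d$, the $N(G)$-invariance of $\phi$ forces $\pi_0(n)\xi=\xi$ for every $n\in N(G)$ (cyclicity plus the equation $\la{\pi_0(g)\xi}{\pi_0(n)\xi}=\la{\pi_0(g)\xi}{\xi}$); normality of $N(G)$ then makes the fixed-point subspace $\pi_0(G)$-invariant, so $\pi_0$ restricts to a representation that descends to $(G/N(G))_d$, exhibiting $\tilde{\phi}\in B((G/N(G))_d)\cap C_b(G/N(G))=B(G/N(G))$. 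The reverse map $\tilde{f}\mapsto\tilde{f}\circ\pi$ is clearly well-defined and inverse to $\phi\mapsto\tilde{\phi}$.

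The main obstacle will be the isometry of this correspondence: a priori the norm on $B(G)$ is a supremum over unitary representations of $G_d$, whereas the norm on $B(G/N(G))$ is a supremum over representations of $(G/N(G))_d$, and one must check that these two suprema agree on $N(G)$-invariant coefficient functions. This is handled precisely by the cyclic decomposition argument above: every unitary representation contributing to $\|\phi\|_{B(G_d)}$ for $\phi$ an $N(G)$-invariant coefficient may, after restricting to the closed span of the $\pi_0(G)$-orbits of its $N(G)$-fixed vectors, be replaced by a representation factoring through $(G/N(G))_d$ without decreasing the norm, so the two norms coincide and $\pi^*:B(G/N(G))\to B(G,\si)$ is an isometric $*$-isomorphism.
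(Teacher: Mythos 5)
First, a structural point: the paper does not prove \Cref{bg} at all — it is quoted from Lau and Loy \cite{lauloy2} — so there is no internal proof to compare against. Your first equality, $B(G)=B(G,\si)$, is correct and runs on exactly the machinery the paper itself uses for the locally compact analogue \Cref{loc7}: $B(G)=B(G_d)\cap C(G)\subset WAP(G_d)\cap C(G)=WAP(G)$ (the classical inclusion from \cite{burckel} together with \Cref{loc211}), admissibility gives $WAP(G)\subset LC(G)$ by part 3 of \Cref{loc6}, and compactness collapses $LC(G)=LC_0(G)=C_0(G,\si)=C(G,\si)$ by part 4, whence $B(G)\subset B(G)\cap C(G,\si)=B(G,\si)$. (In the compact admissible case one could also quote $WAP(G)=C(G,\si)$ from \cite{lauloy1} directly.) This half is fine.

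The second half has a genuine, though repairable, gap. A general element $\phi\in B(G,\si)$ is a coefficient $\phi(g)=\la{\pi_0(g)\xi}{\eta}$ with $\xi\neq\eta$; writing $\phi(g)=\la{\pi_0(g)\xi}{\xi}$ presumes $\phi$ is positive definite, and you cannot reduce to that case by splitting $\phi$ into positive definite summands, since those summands need not be $\si$-continuous (that they can be so chosen is essentially what is being proved). Moreover, after passing to the cyclic subspace of $\xi$ (and projecting $\eta$ into it), left $N(G)$-invariance $\phi(ng)=\phi(g)$ gives $\la{\pi_0(g)\xi}{\pi_0(n^{-1})\eta-\eta}=0$ for all $g$, so by cyclicity it is $\eta$, not $\xi$, that is fixed by $\pi_0(N(G))$; your displayed identity only comes out as stated when $\xi=\eta$. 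The standard repair: let $H^{N}$ be the space of $\pi_0(N(G))$-fixed vectors, which together with its orthocomplement is $\pi_0(G)$-invariant by normality; since $\eta\in H^{N}$ and the component $\xi_2$ of $\xi$ orthogonal to $H^{N}$ satisfies $\pi_0(g)\xi_2\perp H^{N}$, one has $\phi(g)=\la{\pi_0(g)\xi_1}{\eta}$ with $\xi_1\in H^{N}$, and the subrepresentation on $H^{N}$ kills $N(G)$ and descends to $(G/N(G))_d$. This also repairs the isometry claim: the $B(G_d)$-norm is the infimum of $\|\xi\|\,\|\eta\|$ over realizations (not a supremum over representations, which describes the $C^*$-norm on $l_1(G)$), so $\|\tilde\phi\|\leq\|\xi_1\|\,\|\eta\|\leq\|\xi\|\,\|\eta\|$ gives $\|\tilde\phi\|_{B((G/N(G))_d)}\leq\|\phi\|_{B(G_d)}$, while the reverse inequality follows by composing with the quotient homomorphism. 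Continuity of $\tilde\phi$ comes from $C(G,\si)=C(G/N(G))$ (\Cref{loc2}, \Cref{loc3}), and under the paper's own definition $B(G/N(G))=B((G/N(G))_d)\cap C(G/N(G))$ this closes the argument; if the target is the classical Fourier--Stieltjes algebra of the compact topological group $G/N(G)$, one should also invoke the standard fact that the two coincide there.
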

		
		Unlike the locally compact topological group setting where $B(G)$ completely identifies $G$ up to topological isomorphism, the above result indicates that this is not the case in our setting. It however does follow that the quotient group  $G/N(G)$ is uniquely identified by $B(G)$ up to topological isomorphism. In the locally compact setting we are unable to produce an analogue of this strong result and instead have the following;

		\begin{co} \label{loc7} Let $G$ be an admissible $\si$-locally compact right topological group.Then, $B(G)$, $A(G)$ are commutative Banach algebras satisfying the following:
			\begin{enumerate}
				\item $B(G)$ is closed under translations on $G$
				\item $B(G,\si) = B(G/N(G))$ and $A(G)= A(G,\si)$
				\item $B(G)$ separates points from closed sets if and only if $G$ is topological
				\item $A(G)$ separates points in $G$ if and only if $G$ is topological
			\end{enumerate}
		\end{co}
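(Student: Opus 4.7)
The commutativity and Banach norm on $B(G)$ are inherited from $B(G_d)$, and $B(G)=B(G_d)\cap C_b(G)$ is norm-closed because the $B(G_d)$-norm dominates the sup norm; $A(G)$ is a closed subalgebra by definition. For Part 1, write $f\in B(G)$ as a coefficient $f(g)=\langle\pi(g)\xi,\eta\rangle$ of a unitary representation $\pi$ of $G_d$. The positive-definite estimate $\|\pi(g)\xi-\xi\|^2=2(\|\xi\|^2-\operatorname{Re}\langle\pi(g)\xi,\xi\rangle)$ together with $\tau$-continuity of $\langle\pi(\cdot)\xi,\xi\rangle$ at $e$ gives SOT $\tau$-continuity of $\pi$ at $e$; right continuity of $G$ propagates this to every $g$ via $\pi(g)=\pi(gh^{-1})\pi(h)$. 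Hence $L_x f$ and $R_x f$, being coefficients of $\pi$, are $\tau$-continuous and lie in $B(G_d)\cap C_b(G)=B(G)$.

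For Part 2, the inclusion $B(G/N(G))\subseteq B(G,\si)$ is by lifting along $\pi_{N(G)}$: by \Cref{loc5}, $(G/N(G),\tau)=(G/N(G),\si)$ is an LCH topological group, and $\pi_{N(G)}$ is both $\tau$- and $\si$-continuous (the latter via Corollary 1.1 of \cite{namioka}, factorization of continuous homomorphisms into Hausdorff topological groups). For the reverse inclusion, my plan is to show every $f\in B(G,\si)$ is in fact $\si\si$-continuous; then since $N(G)=\overline{\{e\}}^{\si\si}$ and left multiplication in $(G,\si\si)$ is separately continuous, $L_y f=f$ for all $y\in N(G)$, so $f$ descends to a continuous positive-definite-combination on $(G/N(G),\tau)$, i.e.\ an element of $B(G/N(G))$. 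For a positive-definite $f(g)=\langle\pi(g)\xi,\xi\rangle\in B(G,\si)$, applying the positive-definite estimate in the $\si$-topology and propagating from $e$ via separate $\si$-continuity of left multiplication in $(G,\si)$ promotes $\pi$ to a SOT $\si$-continuous representation; then $f(x^{-1}y)=\langle\pi(y)\xi,\pi(x)\xi\rangle$ is jointly $\si$-continuous in $(x,y)$, i.e.\ $\si\si$-continuous. A general $f\in B(G,\si)$ is then reduced to the positive-definite case by polarization, using that $B(G,\si)$ is a translation- and conjugation-invariant subalgebra of $C_b(G)$.

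For $A(G)=A(G,\si)$, since $\|\cdot\|_{B(G)}$ dominates $\|\cdot\|_\infty$ and $C(G,\si)$ is sup-norm closed, $B(G,\si)$ is $B(G)$-norm closed; it then suffices to verify $B(G_d)\cap \cc(G)\subseteq B(G,\si)$, which follows from the same GNS argument applied to positive-definite compactly supported coefficients. Parts 3 and 4 then follow: by Part 1, $B(G)\subseteq LC(G)$, so if $B(G)$ separates points from closed sets then so does $LC(G)$, forcing $G$ to be topological by \Cref{loc6}(2); the converse is the classical Gel'fand--Raikov separation. For Part 4, $A(G)=A(G,\si)\subseteq C(G,\si)$ by Part 2, so $A(G)$ separating points of $G$ forces $(G,\si)$ to be Hausdorff, whence $G$ is topological by \Cref{si}(3); the converse is again classical.

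\textbf{The main obstacle} is the upgrade from $\si$- to $\si\si$-continuity of elements of $B(G,\si)$. The positive-definite case is handled cleanly by the GNS/SOT argument using admissibility to extend the $\si$-continuity at $e$ across $G$. However, reducing a general non-positive-definite $f\in B(G,\si)$ to its positive-definite constituents while preserving $\si$-continuity is subtle: a naive polarization $f=\frac14\sum_k i^k\langle\pi(\cdot)(\xi+i^k\eta),(\xi+i^k\eta)\rangle$ does not automatically produce $\si$-continuous summands. This step mirrors Lau--Loy's compact-case proof (\Cref{bg}) and leans on \Cref{loc2}, \Cref{loc3}, and Namioka's joint continuity theorem \Cref{separatecont}, all of which require admissibility and the $\si$-local compactness hypothesis.
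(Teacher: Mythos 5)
Your route through GNS coefficients breaks down at two points, and the second one you flag yourself. In Part 1 you assume $\tau$-continuity of the diagonal coefficient $\langle\pi(\cdot)\xi,\xi\rangle$ at $e$; but membership $f=\langle\pi(\cdot)\xi,\eta\rangle\in B(G)$ only gives continuity of that one mixed coefficient, and decomposing a general $f\in B(G_d)\cap C_b(G)$ into \emph{continuous} positive-definite pieces is exactly the unproved step. Worse, even when $f$ itself is continuous and positive definite, the estimate you need to get SOT-continuity of $\pi$ at $e$ on the cyclic subspace is $\|\pi(g)\pi(h)\xi-\pi(h)\xi\|^2=2\bigl(f(e)-\mathrm{Re}\,f(h^{-1}gh)\bigr)$, and in a merely right topological group $g\mapsto h^{-1}gh$ is not $\tau$-continuous at $e$, so $\tau$-continuity of $f$ at $e$ does not make this tend to $0$; if this argument worked it would essentially produce $\tau$-continuous representations from continuous positive-definite functions, which for these groups is obstructed (compare \Cref{cont} and Milnes' result quoted after it). In the $\si$-topology conjugation is continuous, so your positive-definite case of Part 2 is fine, but the polarization step you call ``the main obstacle'' is a genuine gap: a general $f\in B(G,\si)$ is not reduced to $\si$-continuous positive-definite constituents, so the $\si\si$-upgrade, and with it Part 2 and hence Parts 3--4 as you derive them, is not established.

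Both gaps are avoidable, because the paper's proof is entirely soft and uses no representations. For Part 1: $B(G)=B(G_d)\cap C_b(G)\subset WAP(G_d)\cap C_b(G)=WAP(G)$ (the classical inclusion $B\subset WAP$ for discrete groups plus \Cref{loc211}), then $WAP(G)\subset LC(G)$ by \Cref{loc6}(3) (this is where admissibility enters, via Grothendieck's criterion), and translation invariance follows from the standard invariance properties of $WAP$ (4.2.3 of Berglund--Junghenn--Milnes); note the right translates were never the issue, since $R_xf$ is automatically a continuous coefficient. For Part 2 no pointwise $\si\si$-continuity is needed: every $\si$-continuous function is already constant on cosets of $N(G)$, since for each $\ve>0$ the set $\{y\mid |f(yx)-f(x)|\leq \ve\}$ is a $\si$-closed $\si$-neighbourhood of $e$ and hence contains $N(G)$; equivalently $\text{Fix}(C_0(G,\si))=N(G)$ (\Cref{loc3}), and since $(G/N(G),\tau)=(G/N(G),\si)$ is a locally compact topological group (\Cref{loc5}), one gets $B(G,\si)=B(G_d)\cap C(G,\si)\subset B(G/N(G))$, with the reverse inclusion by your lifting argument. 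Then $A(G)=A(G,\si)$ follows from $B(G_d)\cap C_c(G)\subset WAP(G)\cap C_c(G)\subset LC_0(G)=C_0(G,\si)$ (\Cref{loc6}(3),(4) and \Cref{loc3}), and Parts 3 and 4 go through as you state them (your derivation of 4, via $A(G)\subset C(G,\si)$ forcing $(G,\si)$ to be Hausdorff and then \Cref{si}, is correct once Part 2 is in place).
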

		
		\begin{proof} That $B(G)$ is closed in norm in $B(G_d)$ follows from the fact that the norm on latter bounds the uniform norm. 
	     Observe that $B(G)= B(G_d)\cap C_b(G)\subset WAP(G_d)\cap C_b(G)=WAP(G)$, where the first containment is a classical result (see \cite{burckel}), and the equality follows from \Cref{loc211}. By 3 of \Cref{loc6}, $B(G)\subset LC(G)$ so that invariance under translations follows by 4.2.3 in \cite{berglund}. Now using \Cref{loc3}, $B(G,\si) = B(G_d)\cap C(G,\si)\subset B(G_d)\cap C(G/N(G))=  B(G/N(G))$. The other inclusion follows by \Cref{loc3} or \Cref{ng}.
			
		For 4, we again obtain the containments $B(G_d)\cap C_c(G)\subset WAP(G)\cap C_c(G)\subset LC_0(G) = C(G,\si)$, where we used 3, 4 of \Cref{loc6} and \Cref{loc3}. It follows that the closure of $B(G_d)\cap C_c(G)$ in $B(G)$, namely $A(G)$, is contained in $B(G,\si)$, whence $A(G)= A(G,\si)$. The last two claims follows from 3 of \Cref{loc6}.
		\end{proof}
		Unlike \Cref{bg}, where $B(G)\cong B(H)$ for a topological group $H$, it is possible that there exist non-compact locally compact admissible, or compact non-admissible right topological groups for which this property never holds. We wonder if such groups possess any special properties. \\
\bigskip
		
		We conclude the section with a short result on representations generalized from \cite{lauloy1}.
		\begin{co}
			Let $G$ be a $\si$-locally compact admissible right topological group. Then $A(G)^*$ is a Von Neumann algebra corresponding to a faithful representation $\pi$ of $G$, with $\la{\pi(x)}{f} = f(x)$ for all $x\in G$, $f\in A(G)$ if and only if $G$ is topological.  
		\end{co}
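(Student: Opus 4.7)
The plan is to leverage \Cref{loc7}(4), which states that $A(G)$ separates points of $G$ if and only if $G$ is topological. The whole equivalence then reduces to this criterion together with the classical Eymard identification $A(G)^* \cong VN(G)$ in the topological setting.

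For the forward direction, suppose $A(G)^*$ is a Von Neumann algebra corresponding to a faithful representation $\pi$ of $G$ satisfying $\la{\pi(x)}{f} = f(x)$ for every $x\in G$ and $f \in A(G)$. I would first observe that, by the defining pairing, $\pi(x) = \pi(y)$ in $A(G)^*$ holds precisely when $f(x) = f(y)$ for every $f \in A(G)$. Injectivity of $\pi$ as a map on $G$ therefore forces $A(G)$ to separate the points of $G$, and applying \Cref{loc7}(4) concludes that $G$ is topological.

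For the converse, if $G$ is topological then under the standing hypotheses it is a locally compact Hausdorff topological group, and the classical Eymard theory applies: $A(G)^*$ is canonically identified with the group Von Neumann algebra $VN(G)$ generated by the left regular representation $\lambda$ of $G$ on $L^2(G)$. The representation $\lambda$ is faithful (since $G$ is Hausdorff) and the pairing $\la{\lambda(x)}{f} = f(x)$ is the defining duality for $A(G)$, so the desired structure is produced on the nose.

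The only step that is not automatic is the translation of the operator-algebraic faithfulness of $\pi$ into pointwise separation by $A(G)$; once that link is made, \Cref{loc7}(4) does all the heavy lifting in the forward direction, and the reverse direction is a pure appeal to the classical theory.
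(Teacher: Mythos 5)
Your argument is correct, and it shares the paper's essential mechanism: since the von Neumann algebra is (identified with) $A(G)^*$, its elements are determined by their pairing against $A(G)$, so faithfulness of $\pi$ becomes a statement about $A(G)$ distinguishing group elements. Where you diverge is in which earlier result closes the loop. You convert faithfulness into ``$A(G)$ separates points'' and then invoke part 4 of \Cref{loc7} directly, which is a clean shortcut. The paper instead uses part 2 of \Cref{loc7} ($A(G)=A(G,\si)$) together with \Cref{loc3}: every $f\in A(G)$ is $\si$-continuous and hence factors through $G/N(G)$, so $\pi$ is $N(G)$-invariant, and faithfulness forces $N(G)=\{e\}$, i.e.\ $G$ topological. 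The two routes are logically equivalent (part 4 of \Cref{loc7} is itself proved from the same $\si$-topology machinery), but the paper's version makes the obstruction explicit --- namely $N(G)$ --- while yours hides it inside the cited lemma. Your converse matches the paper's, which likewise defers to the classical Eymard identification $A(G)^*\cong VN(G)$ with $\la{\lambda(x)}{f}=f(x)$ and faithfulness of the regular representation; both treatments gloss the point that the paper's $A(G)$, defined via $B(G_d)\cap C_c(G)$, coincides with the classical Fourier algebra when $G$ is topological, citing the standard literature for it.
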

		
		\begin{proof} Suppose $A(G)^*$ is a Von Neumann algebra as described. By \Cref{loc7}, $A(G)= A(G,\si)$, so that $x\mapsto \la{\pi(x)}{f} = f(x)$ is $\si$-continuous and factors through $N(G)$ by \Cref{loc3}. Since $VN_\pi$ is determined by its evaluation on $A(G)$, it follows that $\pi$ is $N(G)$-invariant. This contradicts faithfulness unless $G$ is topological. The converse is a standard result (see \cite{kaniuth}).
		\end{proof}
		%
		%
		%
		\bigskip
		\section{Measure Algebras} \label{measuresec}
		
		In this section we present measure algebra analogues for right topological groups and present their properties. We then use these to characterize the existence of a Haar measure on compact right topological groups. In the process we give some alternate conditions that do not rely on admissibility. \\
		
		\noindent Suppose $\mu,\nu\in M(G)$; note that for $f\in \cc(G)$, the usual convolution of measures, $\int\int f(xy)d\mu(x)d\nu(y)$, may not be defined, since the function $f\cd \mu$ defined by $G \to \mathbb{C}$, $y\mapsto \int f(xy) d\mu(x)$ is not necessarily Borel (let alone continuous like in the topological case). As such, following \cite{lauloy1}, let us define
		$$\mc=\{\mu\in M(G)\mid f\cd\mu\in C_b(G),\text{ for all }f\in C_b(G)\}$$
		
		Throughout, for each $g\in G$, we shall denote by $\mu_g$, $_g\mu$ the left and right translation of $\mu$ by $g$, i.e $\mu\circ R_g$ and $\mu\circ L_g$ respectively. We similarly denote by $\mu_G$ the right orbit of $\mu$.\\
		
		Lau and Loy's result on $\mc$ for a compact right topological group $G$ is true for the locally compact case as well, presented as follows.

		\begin{pr} Let $G$ be a locally compact Hausdorff right topological group. Then, $(\mc,\Box)$ is a Banach algebra that is closed in $M(G)$. Furthermore, $l_1(\Ld(G))\subset \mc$, so that the space is non-trivial.
		\end{pr}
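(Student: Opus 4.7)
The plan is to mimic the compact-case argument of Lau and Loy in \cite{lauloy1}, with $C(G)$ replaced throughout by $C_b(G)$ and $M(G)$ by $\mb$. The first observation is that testing the defining condition against $f\equiv 1\in C_b(G)$ forces $|\mu|(G)<\infty$, so $\mc\subset\mb$ and the estimate $\|f\cdot\mu\|_\infty\leq\|f\|_\infty\|\mu\|$ is automatic. For $\mu,\nu\in\mc$ I would then define the product $\mu\Box\nu$ via
\[ (\mu\Box\nu)(f)\;:=\;\int (f\cdot\mu)(y)\,d\nu(y), \qquad f\in C_c(G), \]
which is legitimate because $f\cdot\mu\in C_b(G)$ and $\nu$ is bounded. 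The estimate $|(\mu\Box\nu)(f)|\leq\|f\|_\infty\|\mu\|\|\nu\|$ together with the Riesz representation theorem gives $\mu\Box\nu\in\mb$ with $\|\mu\Box\nu\|\leq\|\mu\|\|\nu\|$.

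The crux I would then establish is the identity $f\cdot(\mu\Box\nu)=(f\cdot\mu)\cdot\nu$ for every $f\in C_b(G)$. Evaluating at $z\in G$ gives $(\mu\Box\nu)(R_z f)=\int(R_z f\cdot\mu)(y)\,d\nu(y)$, and one checks $(R_z f\cdot\mu)(y)=\int f(xyz)\,d\mu(x)=R_z(f\cdot\mu)(y)$, so the right-hand side equals $((f\cdot\mu)\cdot\nu)(z)$. Since $\mu\in\mc$ puts $f\cdot\mu\in C_b(G)$ and then $\nu\in\mc$ puts $(f\cdot\mu)\cdot\nu\in C_b(G)$, this simultaneously proves $\mu\Box\nu\in\mc$ and, by iterating, that the two associativity expressions $((\mu\Box\nu)\Box\rho)(f)$ and $(\mu\Box(\nu\Box\rho))(f)$ both reduce to $\int((f\cdot\mu)\cdot\nu)(z)\,d\rho(z)$, so $\Box$ is associative.

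For closedness of $\mc$ in $\mb$, suppose $\mu_n\in\mc$ with $\mu_n\to\mu$ in total variation; then $\|f\cdot\mu_n-f\cdot\mu\|_\infty\leq\|f\|_\infty\|\mu_n-\mu\|\to 0$ exhibits $f\cdot\mu$ as a uniform limit of bounded continuous functions, so $\mu\in\mc$, and completeness is inherited from $\mb$. For non-triviality, I would write $\mu\in l_1(\Ld(G))$ as $\mu=\sum_g c_g\delta_g$ with $g\in\Ld(G)$ and $\sum|c_g|<\infty$, and note that $(f\cdot\mu)(y)=\sum_g c_g f(gy)$, each summand being bounded continuous precisely because $L_g$ is continuous for $g\in\Ld(G)$; the Weierstrass $M$-test delivers uniform convergence, hence $f\cdot\mu\in C_b(G)$. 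The main subtlety worth flagging is the temptation to use the Fubini identity $(\mu\Box\nu)(f)=\int\int f(xy)\,d\mu(x)\,d\nu(y)$ in the swapped order, which would ask us to integrate $y\mapsto f(xy)$ against $\nu$ for every $x\in G$; but this function need not even be Borel when $x\notin\Ld(G)$, so the Radon product is not available. The identity $f\cdot(\mu\Box\nu)=(f\cdot\mu)\cdot\nu$ is exactly what circumvents this, keeping every intermediate computation inside $C_b(G)$ and never requiring a change in the order of integration.
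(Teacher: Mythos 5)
The paper itself gives no proof of this proposition (it is stated as carrying over from Lau and Loy's compact-case argument), so the only question is whether your write-up actually supplies the locally compact generalization. Most of it does: the norm estimate, the closedness of $\mc$ in $M(G)$, and the inclusion $l_1(\Ld(G))\subset\mc$ are fine. But there is a genuine gap at the central step. You define $\mu\Box\nu$ by $(\mu\Box\nu)(f)=\int (f\cdot\mu)\,d\nu$ only for $f\in C_c(G)$ and then pass to a measure via Riesz representation; yet in proving the crux identity $f\cdot(\mu\Box\nu)=(f\cdot\mu)\cdot\nu$ you evaluate $(\mu\Box\nu)(R_zf)$ for $f\in C_b(G)$, i.e.\ you apply the defining formula to $R_zf$, which is bounded continuous but not compactly supported. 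What you actually know is that the bounded functional $h\mapsto\int(h\cdot\mu)\,d\nu$ on $C_b(G)$ agrees with the Radon measure $\mu\Box\nu$ on $C_c(G)$ (equivalently $C_0(G)$); agreement on all of $C_b(G)$ is not automatic, since a bounded functional on $C_b(G)$ can coincide with a measure on $C_0(G)$ and still carry ``mass at infinity''. In the compact case this issue is vacuous ($C_b=C_c=C(G)$), which is exactly why the Lau--Loy computation is immediate there; in the locally compact case it is the whole point, and it also underlies your associativity claim, since $(\mu\Box(\nu\Box\rho))(f)=\int(f\cdot\mu)\,d(\nu\Box\rho)$ again pairs $\nu\Box\rho$ against a function in $C_b(G)\setminus C_c(G)$.

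Filling this gap is not routine in the right topological setting. The natural tightness argument (cut off $f$ by some $\varphi\in C_c(G)$ equal to $1$ on a large compact set and compare) runs into the fact that multiplication is only separately continuous: the product $K_1K_2$ of two compact sets need not be relatively compact, and for complex $\mu$ the would-be dominating function $y\mapsto\int(1-\varphi)(xy)\,d|\mu|(x)$ need not be continuous or even Borel, since $\mu\in\mc$ gives no control over $|\mu|$. So you either need an argument showing that, for $\mu,\nu\in\mc$, the functional $h\mapsto\int(h\cdot\mu)\,d\nu$ on $C_b(G)$ is represented by its Riesz measure on all of $C_b(G)$ (some $\tau$-smoothness/inner-regularity argument adapted to the one-sided continuity), or a different route to $\mu\Box\nu\in\mc$ that never leaves $C_0(G)$. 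As written, the proof establishes a Banach algebra of measures acting on $C_0(G)$, but not the key closure property $f\cdot(\mu\Box\nu)\in C_b(G)$ for all $f\in C_b(G)$, which is what the proposition asserts.
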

		
		\bigskip
		
		We may then prove the following important divergence from the classical case.

		\begin{thm} \label{cont}
			If $G$ is a locally compact Hausdorff right topological group, then the following are equivalent:
			\begin{enumerate}
				\item The right regular representation of $G$ is continuous
				\item $C_c(G)\subset \mc$ 
				\item $G$ is topological. 
			\end{enumerate}
		\end{thm}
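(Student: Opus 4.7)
The plan is to close the three-way equivalence by proving $(3) \Rightarrow (1) \Rightarrow (3)$ and $(3) \Rightarrow (2) \Rightarrow (3)$; both directions out of (3) are familiar classical consequences for topological groups, so the real work lies in extracting joint continuity of multiplication from either (1) or (2) and then invoking Ellis' theorem (in a form for locally compact Hausdorff paratopological groups) to conclude $(G,\tau)$ is a topological group. For $(3)\Rightarrow(1)$, on a locally compact topological group the right regular representation on $C_0(G)$ is strongly continuous by the standard approximation argument combining right-uniform continuity of functions in $\cc(G)$ with density of $\cc(G)$ in $C_0(G)$. For $(3)\Rightarrow(2)$, joint continuity of multiplication together with dominated convergence gives that, for $\mu\in M(G)$ and $f\in C_b(G)$, the map $y\mapsto\int f(xy)\,d\mu(x)$ is continuous and bounded by $\|f\|_\infty\|\mu\|$, placing $\cc(G)$ inside $\mc$.

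For the central direction $(1)\Rightarrow(3)$, I would assume that $g\mapsto R_g f$ is sup-norm continuous into $C_0(G)$ for every $f\in C_0(G)$. Given a net $(x_\alpha,y_\alpha)\to(x,y)$ in $G\times G$, the decomposition
\[
f(x_\alpha y_\alpha)-f(xy)=\bigl[(R_{y_\alpha}f-R_yf)(x_\alpha)\bigr]+\bigl[(R_yf)(x_\alpha)-(R_yf)(x)\bigr]
\]
has first summand bounded by $\|R_{y_\alpha}f-R_yf\|_\infty\to 0$ by hypothesis, and the second tends to $0$ by continuity of $R_yf\in C_0(G)$ and of right multiplication. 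Since $C_0(G)$ separates points from closed sets, $x_\alpha y_\alpha\to xy$; that is, multiplication is jointly continuous. Hence $G$ is a locally compact Hausdorff paratopological group, and the classical Ellis-type theorem forces inversion to be continuous, so $G$ is topological. The implication $(2)\Rightarrow(3)$ proceeds analogously: for $h\in\cc(G)$ embedded as a measure against a suitable reference $m$, the continuity of $y\mapsto\int f(xy)h(x)\,dm(x)$ translates, via an approximate-identity argument centered at an arbitrary $g\in G$, into continuity of $L_g f$ for each $f$ in a dense subalgebra of $C_0(G)$. This forces $G=\Ld(G)$, and Ellis' theorem again delivers topologicality.

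The main obstacle I foresee is the $(2)\Rightarrow(3)$ half. One must pin down the embedding $\cc(G)\hookrightarrow M(G)$ tacitly used in (2)---presumably against the Haar measure constructed in \Cref{haarmeas} when available, or else against a translation-quasi-invariant measure supported on the topological center---and then upgrade pointwise convergence along the approximate identity $\{h_U\}$ to uniform convergence on compacta, so that continuity passes cleanly through to $L_g f$. The $(1)\Rightarrow(3)$ direction, by contrast, is driven only by norm-continuity of the regular representation together with the separating power of $C_0(G)$, and should present no serious difficulty.
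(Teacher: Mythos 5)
There are genuine gaps in both of the directions that carry the weight of the theorem. Your $(1)\Rightarrow(3)$ argument reads condition (1) as sup-norm continuity of $y\mapsto R_yf$ acting on $C_0(G)$. In this paper (as its proof makes explicit) the right regular representation is the unitary representation on $L_2(G,\lambda)$, and ``continuous'' means continuity into the unitaries with the weak (equivalently strong) operator topology. Under that hypothesis your first summand $(R_{y_\alpha}f-R_yf)(x_\alpha)$ is controlled only by an $L_2$-norm, not by $\|R_{y_\alpha}f-R_yf\|_\infty$, so the decomposition argument collapses; sup-norm continuity of right translations on $C_0(G)$ is essentially as strong as the conclusion you want. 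The paper uses (1) quite differently: for $f\in\cc(G)$ with compact support $K$ and $h\in l_\infty(G)$ it bounds $\left|\int h(x)\,(f(xy_\alpha^{-1})-f(xy^{-1}))\,d\lambda(x)\right|$ by a Cauchy--Schwarz estimate of the form $2\|h\|_\infty\,\lambda(K)\,\|R_{y_\alpha^{-1}}f-R_{y^{-1}}f\|_2\to 0$, which proves $(1)\Rightarrow(2)$, and then extracts (3) from (2). (Relatedly, the embedding in (2) is $f\mapsto f\,d\lambda$ against the same right-invariant $\lambda$ underlying $L_2(G)$; a measure carried by the topological center, as you suggest as a fallback, would not do.)

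Your $(2)\Rightarrow(3)$ is exactly where you flag the obstacle, and that obstacle is fatal as proposed: an approximate identity $h_U$ concentrated at $g$ gives only pointwise convergence of the continuous functions $y\mapsto\int f(xy)h_U(x)\,dm(x)$ to $f(gy)$, and upgrading this to uniform convergence on compacta would require precisely the joint (left) equicontinuity that fails in a non-topological right topological group; pointwise limits of continuous functions need not be continuous, so continuity of $L_gf$ does not follow and the conclusion $G=\Lambda(G)$ is not reached. The paper never attempts to recover $L_gf$. Instead, given a closed set $M$ with $e\notin M$, it picks Urysohn functions $f,h$ adapted to a nested chain of neighbourhoods of $e$ and shows directly that $\phi(y)=\int h(xy)f(x)\,d\lambda(x)$ --- continuous by hypothesis (2) --- vanishes at $e$, is strictly positive on $M$, and has continuous left translates because $L_x[h\cd f\,d\lambda]$ is again of the same convolution form; hence the family of such functions lies in $LC(G)$ and separates points from closed sets, and part 2 of \Cref{loc6} (which is where Ellis' theorem enters) yields that $G$ is topological. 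Your $(3)\Rightarrow(1)$ and $(3)\Rightarrow(2)$ directions are standard and unobjectionable, but as written neither of the two substantive implications is established.
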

		
		\begin{proof}
			Suppose the right regular representation is continuous. Since the inverse map on the unitaries is WOT-WOT continuous, it follows that $G\to B(L_2(G))$, $x\mapsto R_{x^{-1}}$ is WOT continuous, whence SOT continuous by equivalence of the two topologies on the unitaries. Suppose $f\in C_c(G)$ with support $K\subset G$, and $h\in l_{\infty}(G)$; then, if $y_{\al} \to y$ in $G$,
			\begin{align*}\left|\int h(x) \ (f(xy_{\al}^{-1})- f(xy^{-1})) \ d\ld(x)\right| 
			&\leq {\|h\|}_{\infty}\  \int \ci_{K{y_{\al}}\cup Ky} \ |f(xy_{\al}^{-1})- f(xy^{-1})| \ d\ld(x) \\
			&\leq {\|h\|}_\infty \ {\|\ci_{K{y_{\al}}\cup Ky}\|}_2 \ {\|R_{y_{\al}^{-1}}f- R_{y^{-1}}f\|}_2\\
			& \leq 2{\|h\|}_{\infty} \ \ld(K) \ {\|R_{y_{\al}^{-1}}f- R_{y^{-1}}f\|}_2 \  \longrightarrow 0
			\end{align*}
		by SOT-continuity of the right regular representation of $G$. It follows that $h\cd fd\ld\in C_b(G)$, i.e. $\cc(G)\subset \mc$.

			Suppose now that 2 holds. We claim that $\cc(G)\subset \mc$ causes $LC(G)$ to separate points from closed sets. Consider any closed set $M$ such that $e\not\in M$. Since $G\bs M$ is open, there exist strict inclusions $$e\in K\subset V\subset \overline{V} \subset U \subset \overline{U}\subset G\bs M$$ 
			where $V$, $U$ are open neighbourhoods of $e$, and $K$, $\overline{V}$, $\overline{U}$ are compact neighbourhoods of $e$. Since $G$ is locally compact Hausdorff, by Urysohn's lemma, there exist continuous functions $f,p: G\to [0,1]$ such that $\ci_K\leq f\leq \ci_V$, and $\ci_{\overline{V}}\leq p\leq  \ci_{U}$ so that $h=1-p$ satisfies $\ci_{U^c}\leq h\leq \ci_{\overline{V}^c}$. Note here that $\overline{V}^c \supset U^c\supset \overline{U}^c\supset M$, so that $\overline{V}^c$ is a neighbourhood of $M$.
			
			Since $f\in C_c(G)$, the map $\phi: y\mapsto \int h(xy)f(x)d\ld(x)$ is continuous. Furthermore,
			
			$$\int h(xy)f(x)d\ld(x)=\int_{V\cap \overline{V}^cy^{-1}} h(xy)f(x)d\ld(x)\leq \|f\|_{\infty}\|h\|_{\infty} \ld(V\cap \overline{V}^cy^{-1})$$
			so that $\phi$ is non-zero at $y$ implies that $V\cap \overline{V}^cy^{-1} \neq \varnothing$. However, $x\in V\cap \overline{V}^cy^{-1}\implies xy= vy\in \overline{V}^c$ for some $v\in V$, which implies that $y\in V^{-1}\overline{V}^c$, i.e. $\supp(\phi) \subset V\cap \overline{V}^cy^{-1}$. Since $\overline{V}^{c}\cap V \subset V^c\cap V = \varnothing$, it follows that $e\not\in V^{-1}\overline{V}^c$, i.e. $\phi(e)= 0$. On the other hand, if $y\in \overline{U}^c$, $V\cap \overline{V}^cy^{-1} \supset K\cap U^cy^{-1}$ is a neighbourhood of $e$, and thus, $\ld(V\cap \overline{V}^cy^{-1})>0$, so that $\phi(e)>0$. In fact, for $y\in \overline{U}^c$
			\begin{align*}
			\int h(xy)f(x)d\ld(x) =\int_{V\cap \overline{V}^cy^{-1}} h(xy)f(x)d\ld(x) &\geq \int_{K\cap \overline{U}^c y^{-1}} h(xy)f(x)d\ld(x) \\
			&= \ld({K\cap \overline{U}^c y^{-1}})>0
			\end{align*}
			Suppose, for some $\ve>0$, $\int h(xy)f(x)d\ld(x)>\ve$ for all $y \in \overline{U}^c$, whence for all $y\in M$. Then, it is clear that $\phi|_M>\ve$, $\phi(e)=0$. Thus, $\phi$ separates $e$ from $M$.  In other words, the set $D=\{ h\cd f d\ld \mid h\in C_b(G), f\in C_c(G)\}\subset C_b(G)$ separates points from closed sets in $G$. Note however, that 
			{\allowdisplaybreaks \begin{align*}
				L_x[h\cd fd\ld ](z)=[h\cd fd\ld ](xz) = \int h(yxz)f(y)\ld(y)&= \int h(y)f(yz^{-1}x^{-1})\ld(y)\\
				&=\int h(y)R_{x^{-1}}f(yz^{-1})d\ld(y)
				\end{align*}
			}
			is continuous in $z$ since $R_{x^{-1}}f \in C_c(G)$ still holds. In other words $D\subset LC(G)$, is a set that separates points from closed sets on $G$. By \Cref{loc6}, $G$ is a topological group.
			
			Lastly 3 implies 1 is a standard result (see \cite{folland}).
		\end{proof}

		\bigskip
		
		In the compact admissible case, it was shown in \cite{milnes1} that no continuous representation of $G$ may be faithful unless $G$ is topological. This was used to prove \Cref{cont} in \cite{lauloy1} for compact groups. We cannot rely on this in our more general setting, and our proof is constructive. Whether faithful representations exist for general locally compact admissible right topological groups remains an open question.
		
		\begin{co}
			Either one of $\lgg= L_1(G)$, $\lc= L_1(G)$ or $\mc= M(G)$ holds if and only if $G$ is topological.
		\end{co}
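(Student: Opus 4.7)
The plan is to reduce each of the three hypothesized equalities to the containment $C_c(G) \subset \mc$, from which \Cref{cont} immediately yields that $G$ is topological. The converse direction, that every topological $G$ satisfies all three equalities, is a standard consequence of the joint continuity of convolution in the topological setting and can be cited directly from \cite{folland}, so the substantive content lies in the forward implications.

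For the forward direction, I would first record that the spaces $\lgg$ and $\lc$ are defined (following \cite{lauloy1}) as subspaces of $L_1(G)$ whose associated Radon measures $f\,d\ld$ (with $\ld$ a Haar measure, which is available on the topological reference groups and on the candidates covered by \Cref{haarmeas}) lie inside $\mc$. In particular there are canonical injections $\lgg \hookrightarrow \mc$ and $\lc \hookrightarrow \mc$, together with $C_c(G) \hookrightarrow L_1(G) \hookrightarrow M(G)$ via the same identification $f \mapsto f\,d\ld$.

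Given these inclusions, each of the three alternatives forces $C_c(G) \subset \mc$: if $\mc = M(G)$, then trivially $C_c(G) \hookrightarrow M(G) = \mc$; if $\lgg = L_1(G)$ or $\lc = L_1(G)$, then $C_c(G) \hookrightarrow L_1(G)$ coincides with $\lgg$ (resp.\ $\lc$), which embeds into $\mc$ by construction. In either case \Cref{cont} now applies and delivers that $G$ is a topological group.

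The only step requiring genuine care is the compatibility of the definitions: namely confirming that the $L_1$-level spaces $\lgg$ and $\lc$ really do map into $\mc$ under $f \mapsto f\,d\ld$, and that no subtlety about passage from a class modulo null sets to a genuine continuous function arises. Once that bookkeeping is settled, the corollary is essentially a repackaging of \Cref{cont} and no further technical input is needed.
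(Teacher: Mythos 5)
Your proposal is correct and follows essentially the same route as the paper: each of the three equalities is used to conclude $C_c(G)\subset\mc$ (via $\lgg,\lc\subset\mc$ and $C_c(G)\subset L_1(G)\subset M(G)$), after which \Cref{cont} gives that $G$ is topological, and the converse is the standard fact about convolution on locally compact topological groups cited from \cite{folland}. The bookkeeping you flag about identifying $f\in C_c(G)$ with $f\,d\ld$ is exactly the identification implicit in \Cref{cont}, so no extra work is needed.
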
 
		
		\begin{proof}
			Since $\lgg,\lc\subset \mc$, and $\cc(G)\subset L_1(G)\subset M(G)$, by \Cref{cont} the forward implication holds. The converse follows from $LC_0(G)$ being contained in the uniformly continuous functions on $G$ when it is topological \cite{folland}.
		\end{proof}
		
		\bigskip
		
		The following two results presented in \cite{lauloy1} generalize to locally compact right topological groups without much effort and we omit the proof.
		
		\begin{pr}
			Let $G$ be a locally compact Hausdorff right topological group. Then, $\lgg$ is a closed right ideal in $\mc$ containing all $\mu\in L_1(G)$ such that $x\mapsto |\mu|(Kx^{-1})$ is continuous for all $K\subset G$ compact.
		\end{pr}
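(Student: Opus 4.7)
The plan is to interpret $\lgg$ as $L_1(G)\cap\mc$, consistent with the preceding corollary, and to verify the three assertions in turn. \emph{Closedness} is immediate: both $L_1(G)$ and $\mc$ are closed in $M(G)$ (the former because the $L^1$-norm agrees with the total-variation norm on absolutely continuous measures, the latter by the preceding proposition), so their intersection is closed in $\mc$.

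For the \emph{right ideal} property, let $\mu\in\lgg$ and $\nu\in\mc$. The Banach algebra structure of $\mc$ gives $\mu\Box\nu\in\mc$, and it remains to check $\mu\Box\nu\ll\lambda$. If $\lambda(E)=0$ for a Borel set $E$, right-invariance of $\lambda$ gives $\lambda(Ey^{-1})=0$ for every $y\in G$, and $\mu\ll\lambda$ forces $\mu(Ey^{-1})=0$. A monotone-class argument extends the convolution identity $\int f\,d(\mu\Box\nu)=\int(f\cdot\mu)\,d\nu$ from $f\in C_b(G)$ to Borel indicators---using Urysohn approximations of open sets by $C_c$-functions together with Radon regularity---so $(\mu\Box\nu)(E)=\int\mu(Ey^{-1})\,d\nu(y)=0$.

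The bulk of the work is the \emph{criterion}: given $\mu\in L_1(G)$ with $y\mapsto|\mu|(Ky^{-1})$ continuous on compacts, I show that $(f\cdot\mu)(y)=\int f(xy)\,d\mu(x)$ is continuous in $y$ for every $f\in C_b(G)$. I would first establish this with $|\mu|$ in place of $\mu$. The layer-cake identity
$$(\phi\cdot|\mu|)(y)=\int_0^{\|\phi\|_\infty}|\mu|(\{\phi\geq t\}y^{-1})\,dt$$
is continuous in $y$ for $\phi\in C_c(G)^+$ by the hypothesis applied to the compact level sets $\{\phi\geq t\}$ together with dominated convergence (bound $|\mu|(G)<\infty$). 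Linearity extends continuity to $C_c(G)$, uniform density of $C_c$ in $C_0$ extends it to $C_0(G)$, and a tightness argument with a Urysohn cutoff $p\in C_c(G)$ (equal to $1$ on a compact $K$ with $R_{y_0}^*|\mu|(G\setminus K)<\epsilon$) extends it to $C_b(G)$, because $y\mapsto\int p\,dR_y^*|\mu|$ is continuous and $R_y^*|\mu|(G)=|\mu|(G)$ is constant, transferring the tail estimate from $y_0$ to a neighborhood. To pass from $|\mu|$ to the complex measure $\mu=h\cdot|\mu|$ (with $|h|=1$), approximate $h$ in $L^1(|\mu|)$ by $h_\epsilon\in C_c(G)$ with $\|h_\epsilon\|_\infty\leq 1$, so $(f\cdot\mu)$ is a uniform-in-$y$ limit of $(f\cdot(h_\epsilon|\mu|))$; then uniformly approximating $h_\epsilon$ on its compact support by simple step functions with compact level sets reduces each $(f\cdot(h_\epsilon|\mu|))(y)$ to a finite sum of quantities $c_i|\mu|(A_i\cap Ky^{-1})$, whose continuity in $y$ follows from an inclusion-exclusion refinement of the hypothesis.

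\textbf{Main obstacle.} The delicate point is precisely this passage from $|\mu|$ to $\mu$ in the criterion. Because $y\mapsto xy$ is only guaranteed to be continuous for $x\in\Lambda(G)$, one cannot apply pointwise convergence and dominated convergence directly to the integrand $f(xy)h(x)$. The plan circumvents this by transferring all $y$-dependence onto sets of the form $Ky^{-1}$---where the hypothesis supplies exactly the required continuity---at the cost of a double approximation of $h$, first by a compactly supported continuous function and then by simple functions with compact level sets.
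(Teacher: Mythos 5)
The decisive step of your argument is the last one, and it contains a genuine gap. After reducing to $(f\cdot(h_\epsilon|\mu|))(y)$ you need continuity in $y$ of quantities of the form $|\mu|(A\cap Ky^{-1})$, where $A$ is a \emph{fixed} set (a level set of $h_\epsilon$) and only $K$ is translated. No ``inclusion-exclusion refinement'' of the hypothesis yields this: the hypothesis controls only sets $K'y^{-1}$ in which the whole compact set moves with $y$, and $A\cap Ky^{-1}$, whose first factor stays put, is not a Boolean combination of such sets; equivalently, you are tacitly assuming that the cut-down measure $\ci_A|\mu|$ again satisfies the hypothesis, which is not given. (The earlier parts are in substance fine: closedness is immediate; for the right-ideal property I would avoid the monotone-class extension to Borel indicators --- measurability in $y$ of $\mu(Ey^{-1})$ is exactly what is unavailable, and from $\cc(G)$ a sequential monotone-class argument only reaches Baire functions --- and instead use $|\mu\Box\nu|(U)=\sup\{|\int f\,d(\mu\Box\nu)|:f\in \cc(U),\ |f|\le 1\}\le \sup_{y}|\mu|(Uy^{-1})\,\|\nu\|$ together with outer regularity of $\ld$ and uniform absolute continuity of $\mu$; also, in the layer-cake step dominated convergence is a sequential theorem, so for nets use the monotonicity in $t$ of $t\mapsto|\mu|(\{\phi\ge t\}y^{-1})$.)

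Worse, the gap cannot be closed from the hypothesis as you have read it (continuity of translates of $|\mu|$ alone), because that implication is false. In the group of \Cref{hehe}, take $\mu=\psi\,d\ld_{\mathbb{T}\times\{\phi\}}$ with $\psi(u)=u$, so $|\mu|=\ld_{\mathbb{T}\times\{\phi\}}$. For every compact (indeed Borel) $K$ and $y=(v,\delta)$ one computes $|\mu|(Ky^{-1})=\ld(\{u:(u,\phi\delta)\in K\})$, which is locally constant in $y$, so the stated hypothesis holds; yet for $f(w,\epsilon)=\mathrm{Re}(w)$ one gets $f\cdot\mu(v,\delta)=\overline{\phi(v)}/2$, discontinuous because $\phi$ is a discontinuous automorphism, so $\mu\notin\mc\supset\lgg$ (and correspondingly $y\mapsto|\mu|(A\cap Ky^{-1})$ is discontinuous for suitable arcs $A$, $K$). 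So any correct proof must work from continuity of translates of $\mu$ itself --- say $x\mapsto\mu(Ex^{-1})$ continuous for Borel sets $E$, as in the compact-case original in \cite{lauloy1} --- and under that hypothesis your detour through $|\mu|$ and the polar decomposition is unnecessary: approximate $f\in \cc(G)$ uniformly by linear combinations of indicators of the compact level sets $\{f\ge t\}$, then pass to $C_b(G)$ by your tightness argument (the Borel-set hypothesis on $\mu$ transfers to $|\mu|$, since $y\mapsto|\mu|(Ey^{-1})$ is then a supremum of continuous functions whose complementary set function is as well).
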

	
	\bigskip
	
		%
		%
		
		\bigskip

		\begin{pr}
			Let $G$ be a locally compact Hausdorff right topological group. Then, $\lc$ is a closed left ideal in $\mc$ that is an L-space. Moreover, for all all $\mu\in \lc$, $y\mapsto \mu(Ey)$ is continuous.
		\end{pr}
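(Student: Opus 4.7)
The plan is to establish, in order, three structural properties of $\lc$ — norm-closedness in $\mc$, closure under left convolution by $\mc$, and the L-space property — and then recover the continuity of $y \mapsto \mu(Ey)$ as an immediate consequence or a direct unfolding of the definition. The first two steps follow a standard pattern; the L-space claim carries the main content.

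For norm-closedness, given $\{\mu_n\} \subset \lc$ with $\mu_n \to \mu$ in $\mc$, the uniform estimate
\[ |\mu(Ey) - \mu(Ez)| \leq 2\|\mu - \mu_n\| + |\mu_n(Ey) - \mu_n(Ez)|, \]
valid for every Borel $E$ and all $y,z \in G$, transfers the $y$-continuity property of each $\mu_n$ to the limit via a standard $\varepsilon/3$ argument.

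For the left-ideal property, take $\nu \in \mc$ and $\mu \in \lc$. Since $\mu \in \mc$, the convolution $\nu \Box \mu$ is well-defined in $\mc$ via $(\nu \Box \mu)(f) = \int (f\cdot\mu)(x)\,d\nu(x)$. Rewriting on characteristic functions (and verifying the requisite measurability of the integrand) yields $(\nu \Box \mu)(Ey) = \int \mu(x^{-1}Ey)\,d\nu(x)$ for every Borel $E$. The integrand is bounded by $\|\mu\|$ and, by hypothesis on $\mu$, is continuous in $y$ for each fixed $x$; dominated convergence then gives $y$-continuity of $(\nu \Box \mu)(Ey)$, placing $\nu \Box \mu$ in $\lc$.

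For the L-space claim, I would reduce to showing $\mu \in \lc \Rightarrow |\mu|, \mu^\pm \in \lc$. Using the Hahn decomposition via a Hahn set $P$, one writes $\mu^+(Ey) = \mu((Ey) \cap P)$ and similarly for $\mu^-$, then argues that these quantities inherit $y$-continuity. The main expected obstacle is precisely this step: the Hahn set does not in general behave well under right-translation, since $(Ey)\cap P$ is not of the form $(E \cap F)y$ for any convenient Borel $F$, so a direct verification is unlikely to succeed. I expect the best route is a density/approximation argument — approximate $|\mu|$ in the $\mc$-norm by simpler positive elements already in $\lc$ (for instance, finite positive combinations or measures supported in $\Ld(G)$) and then invoke the norm-closedness from the first step. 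Once this is done, the final continuity assertion $y\mapsto\mu(Ey)$ is either immediate from the definition of $\lc$ or falls out directly from the Fubini formula used above.
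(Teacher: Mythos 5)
The paper itself gives no proof of this proposition (it is quoted from Lau--Loy and declared to ``generalize without much effort''), so your attempt has to be measured against the intended argument, and the first problem is that you are working with the wrong defining property of $\lc$. In Lau--Loy, and in the way this paper actually uses $\lc$ later (``for each $\mu\in \lc$, $x\mapsto \mu_{x^{-1}}$ is norm continuous'' in the proof that $\lc\subset\mw$), $\lc$ is the set of $\mu$ whose translation map $y\mapsto \mu_y=\mu\circ R_y$ is continuous in the total variation norm; the ``moreover'' clause is then the one-line consequence $|\mu(Ey)-\mu(Ez)|=|\mu_y(E)-\mu_z(E)|\leq \|\mu_y-\mu_z\|$. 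You instead take setwise continuity of $y\mapsto\mu(Ey)$ as (essentially) the definition, so the three structural claims are being proved for a different class than the one in the statement. Your closedness step is harmless either way (the same $\varepsilon/3$ argument works in norm, since translation is an isometry), but the other two steps inherit real problems from this choice.

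Concretely: (i) your left-ideal argument passes through the formula $(\nu\Box\mu)(Ey)=\int \mu(x^{-1}Ey)\,d\nu(x)$, but in a right topological group the left translate $x^{-1}E$ of a Borel set need not be Borel when $x\notin\Ld(G)$, and the convolution on $\mc$ is only defined through its action on continuous functions, so extending such a Fubini identity to indicators is precisely the kind of step that fails here; the measurability you flag as ``requisite'' is not available. With the norm definition none of this is needed: for continuous $f$ one checks $(\nu\Box\mu)_y=\nu\Box\mu_y$, whence $\|(\nu\Box\mu)_y-(\nu\Box\mu)_z\|\leq\|\nu\|\,\|\mu_y-\mu_z\|$. (ii) The L-space step, which you correctly identify as the main content, is left unresolved: you note that the Hahn set does not cooperate with translation and then propose an approximation of $|\mu|$ by positive elements of $\lc$ without any construction of such approximants, and for your setwise class it is unclear any exist. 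With the norm definition the step is short: for complex measures $\alpha,\beta$ one has $\bigl|\,|\alpha|(E)-|\beta|(E)\,\bigr|\leq|\alpha-\beta|(E)$ for every Borel $E$, hence $\|\,|\alpha|-|\beta|\,\|\leq\|\alpha-\beta\|$, and since $R_y$ is a homeomorphism, $|\mu_y|=|\mu|_y$; therefore $\|\,|\mu|_y-|\mu|_z\,\|\leq\|\mu_y-\mu_z\|$, so $|\mu|\in\lc$ whenever $\mu\in\lc$, and $\lc$ is a closed sublattice of the L-space $M(G)$, hence an L-space. So the proposal as written has a genuine gap at the L-space claim and a measurability gap in the ideal claim, both of which disappear once the correct (norm-continuity) definition of $\lc$ is used.
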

		%
		%
		%
		\bigskip
		\bigskip
		
		\noindent Let us denote by $$D(G)= \{f\in C_b(G)\mid g\mapsto L_yf(g^{-1}) \text{ is continuous for all } y\in G\},$$ as in \cite{lauloy1}. 
		
		In general, the relation between $\lgg$ and $\lc$ is unknown. However, the following proposition gives some common elements.
		
		\begin{pr} If $G$ is locally compact then, $L_1(G)\cap WAP(G)\cap D(G)\subset\lgg$; in particular,  $L_1(G)\cap AP(G)\cap D(G) \subset \lgg\cap \lc$. 
		\end{pr}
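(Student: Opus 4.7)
The plan is to take $f \in L_1(G) \cap WAP(G) \cap D(G)$, set $\mu := f\,d\ld$ where $\ld$ is the right Haar measure, and verify the sufficient conditions from the previous proposition characterizing membership in $\lgg$: namely $\mu \in \mc$, and $x \mapsto |\mu|(Kx^{-1})$ continuous for every compact $K \subset G$. The continuity of $x \mapsto |\mu|(Kx^{-1})$ is the easier ingredient. Using the right-invariance of $\ld$ via the substitution $z = yx$, one gets $|\mu|(Kx^{-1}) = \int_K |f(zx^{-1})|\,d\ld(z)$. The $D(G)$ hypothesis, applied with its $y$-parameter equal to $z$ and $g$-parameter equal to $x$, says that $x \mapsto f(zx^{-1})$ is continuous for each fixed $z$; the dominator $\|f\|_\infty \ci_K$ is $\ld$-integrable, and dominated convergence closes this step.

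For $\mu \in \mc$, fix $h \in C_b(G)$. After the substitution $u = xy$, $(h\cdot\mu)(y) = \int h(u)\,f(uy^{-1})\,d\ld(u)$, whose integrand is again pointwise continuous in $y$ by $D(G)$. For $h \in \cc(G)$, dominated convergence with dominator $\|h\|_\infty\|f\|_\infty\ci_{\supp(h)}$ suffices. To extend to general $h \in C_b(G)$, I would combine two ingredients: (a) uniform absolute continuity of $\{|f(\cdot y^{-1})|\,d\ld\}_y$, which is automatic because $|f|\,d\ld$ is finite and right-invariance gives $\int_A |f(\cdot y^{-1})|\,d\ld = \int_{Ay^{-1}}|f|\,d\ld$ with $\ld(Ay^{-1}) = \ld(A)$; and (b) tightness, where $WAP(G)$ does the real work. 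Since $\{R_y f : y \in G\} = \{R_{y^{-1}} f : y \in G\}$ is relatively weakly compact in $C_b(G)$ and $D(G)$ delivers pointwise convergence $R_{y_n^{-1}} f \to R_{y_0^{-1}} f$, every weak cluster point in $C_b(G)$ must coincide with the pointwise limit, and this pins down the family $\{|R_{y^{-1}}f|\,d\ld\}$ for $y$ near $y_0$ tightly enough on a common compact set to run a Vitali-type convergence argument.

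For the ``in particular'' clause, $AP(G) \subset WAP(G)$ yields the inclusion into $\lgg$ at once. To land in $\lc$ as well, I would verify continuity of $y \mapsto \mu(Ey)$ for Borel $E$, whose necessity is recorded in the previous proposition; the norm-compactness of $\{R_y f\}$ in $C_b(G)$ under $AP$ is strictly stronger than weak compactness and yields equicontinuity of the orbit on compact sets, which upgrades the Vitali step above to the direction required for $\lc$. The main obstacle I anticipate is precisely the tightness step in the $\mu \in \mc$ verification: without admissibility we lack joint continuity of multiplication, so the sets $Ky$ for $y$ near $y_0$ need not lie in a common compact set, and $WAP$ must substitute for the joint continuity that would automatically deliver tightness in the topological setting.
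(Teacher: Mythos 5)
Your plan diverges from the paper's argument, and the divergence opens genuine gaps. The paper works directly from the Lau--Loy definitions of $\lgg$ and $\lc$: membership amounts to weak (resp.\ norm) continuity of the orbit map $y\mapsto R_{y^{-1}}f$, and this is obtained by combining the $D(G)$ hypothesis (which gives pointwise convergence $R_{y_\al^{-1}}f\to R_{y^{-1}}f$ whenever $y_\al\to y$) with relative weak compactness of the right orbit for $WAP(G)$, resp.\ relative norm compactness for $AP(G)$, so that every cluster point is identified with the pointwise limit. You instead try to verify the criteria recorded in the two preceding propositions. For $\lgg$ this is logically admissible, since ``$\mu\in L_1(G)$ with $x\mapsto|\mu|(Kx^{-1})$ continuous for all compact $K$'' is stated there as a sufficient condition. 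For $\lc$, however, the property you propose to check --- continuity of $y\mapsto\mu(Ey)$ --- is recorded only as a \emph{necessary} condition, so verifying it establishes nothing; what the ``in particular'' clause needs is norm continuity of $y\mapsto R_{y^{-1}}f$, which falls out of $AP(G)$ exactly as in the weak case, and your ``equicontinuity upgrades the Vitali step'' sketch is not a substitute for that argument.

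Second, the steps you do carry out lean on dominated convergence in a way that is not available here. The groups of interest are typically non-metrizable (by \Cref{metrizable} the metrizable admissible case is already topological), so continuity of $x\mapsto\int_K|f(zx^{-1})|\,d\ld(z)$ and of $y\mapsto\int h(u)f(uy^{-1})\,d\ld(u)$ must be tested along nets, and the dominated convergence theorem fails for nets: pointwise continuity in the translation variable from $D(G)$ plus a fixed integrable dominator is not enough. The repair is precisely the $WAP$ mechanism the paper uses --- note $|f|\in WAP(G)\cap D(G)$ because $WAP(G)$ is a norm-closed, conjugate-closed subalgebra of $C_b(G)$, and $h\mapsto\int_K h\,d\ld$ is a bounded, hence weakly continuous, functional on $C_b(G)$ --- but you invoke $WAP$ only for the tightness/Vitali extension from $\cc(G)$ to $C_b(G)$, where it is not at all clear that it yields tightness; moreover that entire $\mc$ verification is a detour, since the sufficient condition you cite already places $\mu=f\,d\ld$ in $\lgg\subset\mc$. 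As written, the proposal does not establish the $\lc$ inclusion and leaves the continuity steps incomplete; the paper's definitional argument settles both claims in a few lines.
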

		
		\begin{proof} 
		Suppose $f\in  L_1(G)\cap WAP(G)\cap D(G)$. For each $x\in G$, $y\mapsto  f(xy^{-1}) = R_{y^{-1}}f(x)$ is continuous. If $y_{\al}\to y$ in $G$, then, $f\in WAP(G)$ implies that every subnet of $\{R_{y^{-1}_{\al}}f\}$ has a weak cluster point in $C_b(G)$ which, by pointwise continuity coincides with $R_{y^{-1}}f$. It follows that every subnet of $\{R_{y^{-1}_{\al}}f\}$ has a subnet that converges to $R_{y^{-1}}f$, whence $R_{y^{-1}_{\al}}f\to R_{y^{-1}}f$ weakly. Thus, $y\mapsto R_{y^{-1}}f$ is weakly continuous and $f\in \lgg$.	A similar proof shows that for $f\in  L_1(G)\cap AP(G)\cap D(G)$, $y\mapsto R_{y^{-1}}f$ is uniformly continuous and the latter claim follows.
		\end{proof}
		\bigskip
		\bigskip
		
		\subsection{Algebras characterizing the Haar measure}
		
		 Throughout this subsection, we assume that all groups are compact. For every measure $\mu\in M(G)$, we denote by $\mu_G$, the right orbit of $\mu$ under $G$. Then, for a compact Hausdorff right topological group, we define,
			$$\mw = \{\mu\in \mc \mid \mu_{G} \text{ is relatively weakly compact}\}$$
		Note that $\lc\subset \mw\subset \mc$.\\
		
		\begin{pr} \label{first}
			$\mw$ is a closed, left ideal of $\mc$ containing $\lc$ and left  and right invariant under $\Ld(G)$.
		\end{pr}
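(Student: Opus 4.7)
The plan is to verify the four assertions of the proposition---norm-closedness, the left ideal property, containment of $\lc$, and two-sided invariance under $\Ld(G)$---in turn. The single recurring ingredient is the associativity identity $(\nu \Box \mu) \circ R_h = \nu \Box (\mu \circ R_h)$ (immediate from the definition of $\Box$), combined with the contraction $\|\mu_g\| \leq \|\mu\|$ for translations.

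For closure, I would take $\mu_n \in \mw$ with $\mu_n \to \mu$ in $\mc$ (closedness of $\mc$ itself is already known) and show that the orbit $\mu_G$ is relatively weakly compact in $M(G)$. Uniformly in $g \in G$, $\|\mu_g - (\mu_n)_g\| \leq \|\mu - \mu_n\|$, which gives $\mu_G \subset (\mu_n)_G + \varepsilon_n B_{M(G)}$ with $\varepsilon_n \to 0$. Invoking the standard stability of relatively weakly compact subsets of a Banach space under uniform approximation in Hausdorff distance---equivalently, Grothendieck's characterization of weakly compact subsets of $M(G)$ via uniform countable additivity---one deduces that $\mu_G$ itself is relatively weakly compact, so $\mu \in \mw$.

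For the left ideal property, take $\nu \in \mc$ and $\mu \in \mw$. The identity above gives $(\nu \Box \mu)_G = \nu \Box \mu_G$, and since the left convolution operator $T_\nu \colon M(G) \to M(G)$ is bounded linear and hence weak-weak continuous, it carries the relatively weakly compact set $\mu_G$ to a relatively weakly compact set. For $\lc \subset \mw$: if $\mu \in \lc$, then $y \mapsto \mu_y$ is norm-continuous into $M(G)$ (this continuity underlies the L-space structure and the property $y \mapsto \mu(Ey)$ continuous recorded in the preceding proposition). Since $G$ is compact, $\mu_G$ is then norm-compact, hence weakly compact.

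For invariance under $\Ld(G)$, observe that $g \in \Ld(G)$ gives $\delta_g \in \mc$ via $l_1(\Ld(G)) \subset \mc$. The left translate ${}_g\mu = \delta_g \Box \mu$ then lies in $\mw$ by the left ideal property just established. For the right translate $\mu_g = \mu \Box \delta_g$, a direct computation yields $(\mu_g)_h = \mu_{gh}$, so $(\mu_g)_G = \mu_{gG} = \mu_G$ is trivially relatively weakly compact; since $\mu_g \in \mc$ (from $\mc$ being an algebra and $\delta_g \in \mc$), this gives $\mu_g \in \mw$. The main technical subtlety throughout is the closure step, as naive Hausdorff-distance approximation does not preserve weak compactness in a general Banach space; the Grothendieck-type stability result (or a direct uniform-countable-additivity argument in $M(G)$) is what makes the argument go through.
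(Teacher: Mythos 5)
Your proposal is correct and takes essentially the same approach as the paper: the paper's closure step is just a by-hand version (via Grothendieck's double limit criterion and the uniform estimate $\|\mu_g-(\mu_k)_g\|\leq\|\mu-\mu_k\|$) of the uniform-approximation stability lemma you invoke, and its left-ideal and $\Ld(G)$-invariance arguments are the weak-cluster-point form of your weak--weak continuity of $\nu\mapsto\mu\Box\nu$ and translation-by-point-mass observations. Your derivation of left $\Ld(G)$-invariance from the ideal property via $\delta_g\Box\mu$ is a mild streamlining of the paper's direct argument with $L_g^*$, but the substance is the same.
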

		
		\begin{proof}
			It is obvious that $\mw$ is a subspace of $\mc$. Suppose $\{\mu_n\}\subset \mw$, and $\mu_n\to \mu$. We shall show that $\mu_G$ is relatively weakly compact using Grothendieck's double limit theorem. Suppose for a bounded sequence $\{T_n\}\subset M(G)^*$, $\lim_n\lim_m T_n(\mu_{g_m})$ and $\lim_n\lim_m T_m(\mu_{g_n})$ exist.	
			
			For $T\in M(G)^*$ and $g\in G$, $T\circ R_g^* \in M(G)^*$ and $$T(\mu_g) - T({(\mu_k)}_{g}) =  TR_{g}^*(\mu-\mu_k) \leq \|T\|\|\mu-\mu_k\|\to 0$$ uniformly over $g\in G$, and $T$ contained in a normed bounded subset of $M(G)^*$. Using this, we have, 
			\begin{align*}
			\lim_n\lim_m T_nR_{g_m}^*(\mu) =   \lim_n\lim_m [T_nR_{g_m}^*](\lim_k\mu_k)
			= \lim_k \lim_n\lim_m  [T_nR_{g_m}^*](\mu_k) 
			&{=^{(\ast)}} \lim_k \lim_m\lim_n [T_nR_{g_m}^*](\mu_k)\\
			&= \lim_m\lim_n \lim_k  [T_nR_{g_m}^*](\mu_k)\\
			&= \lim_m\lim_n   [T_nR_{g_m}^*](\mu)
			\end{align*}
			where we used at $(\ast)$ the fact that for an appropriately selected subsequence in $k$, $\lim_n\lim_m  [T_nR_{g_m}^*](\mu_k)$ and $\lim_m\lim_n [T_nR_{g_m}^*](\mu_k)$ exist (since the limit of each iterated limit exists as $k\to \infty$), and ${[\mu_k]}_G$ is weakly compact for all $k\in \mathbb{N}$. The claim thus follows.
			
			Suppose now that $\mu\in\mc$, $\nu\in \mw$, and $K\subset G$ is an arbitrary non-empty set so that for each $x\in K$, $T\in M(G)^*$, $\la{{[\mu\Box\nu]}_{x^{-1}}}{T} = \la{\mu\Box{\nu_{x^{-1}}}}{T}$. 
			Since $\nu_G$ is relatively weakly compact, $\{\nu_{x^{-1}}\mid x\in K\}$  has a weak cluster point, say $\gamma\in M(G)$. Then, $\mu\Box\gamma$ is clearly a weak cluster point of $\{{[\mu\Box\nu]}_{x^{-1}}\mid x\in K\}$.  It follows that ${[\mu\Box\nu]}_{G}$ is relatively weakly compact and $\mu\Box\nu\in \mw$.
			
			That $\lc$ is contained in $\mw$ is easy to see - for each $\mu\in \lc$, $x\mapsto \mu_{x^{-1}}$ is norm continuous, thus weakly continuous, so that the image of the map, $\mu_G$ is weakly compact. 	
			
			As $\mc$ is closed under right translation by $\Ld(G)$, and this does not change the right orbit of an element of $\mc$, $\mw$ is closed under right translation. Further, if $\mu\in \mw$, and $T\in M(G)^*$,  for each $g\in \Ld(G)$, $T(_{g^{-1}}\mu) = T\circ L_{g}^* \in M(G)^*$. Thus, the right orbit of $_{g^{-1}}\mu$ has a weak cluster point via the relative weak compactness of $\mu_{G}$ and $_{g^{-1}}\mu\in G$.  
		\end{proof}
		
		\bigskip
		
		\begin{pr} The following are equivalent:
			\begin{enumerate}
				\item $G$ has a Haar measure
				\item $\lc$ contains a non-zero measure
				\item $\mw$ contains a non-zero measure. 
			\end{enumerate}	
		\end{pr}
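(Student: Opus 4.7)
The two forward implications are immediate. For $(1)\Rightarrow(2)$, right-invariance of a Haar measure $\mu$ makes $y\mapsto \mu_{y^{-1}}=\mu$ (trivially) norm continuous, so $\mu\in\lc$. And $(2)\Rightarrow(3)$ is just the containment $\lc\subset\mw$ from \Cref{first}.

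The substantive direction is $(3)\Rightarrow(1)$: from a non-zero $\mu\in\mw$ I plan to extract a non-zero positive right-invariant Radon measure via the Ryll--Nardzewski fixed point theorem applied to the weakly closed convex hull of a positive orbit. The first step is to reduce to a positive measure. Since each $R_g$ is a homeomorphism, $|\mu_g|=|\mu|_g$, so $|\mu|_G$ is the image of $\mu_G$ under the total-variation map. By a standard consequence of the Dieudonn\'e--Grothendieck theorem, a bounded subset of $M(G)=C(G)^{*}$ is weakly relatively compact if and only if its set of total variations is; hence $|\mu|_G$ is weakly relatively compact. Let $K=\overline{\mathrm{co}(|\mu|_G)}^{\,w}$: by Krein--\v{S}mulian $K$ is weakly compact; it is convex, consists of positive measures (weak closure preserves $\int f\,d\nu\geq 0$ for every $f\geq 0$ in $C(G)$), and every $\nu\in K$ has total mass $\nu(G)=\|\mu\|>0$, because $\nu\mapsto\nu(G)$ is weakly continuous and constant on $|\mu|_G$.

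Each right-translation operator $R_g^{*}:M(G)\to M(G)$ is a bounded linear isometry, hence weakly continuous and affine, and it maps $|\mu|_G$ into itself, so it restricts to a weakly continuous affine self-map of $K$. The semigroup $\mathcal{S}=\{R_g^{*}:g\in G\}$ is non-contractive in the sense of Ryll--Nardzewski: for $x\neq y$ in $K$, $\|R_g^{*}x-R_g^{*}y\|=\|x-y\|>0$, so $0$ is not in the norm closure of $\{R_g^{*}(x-y):g\in G\}$. The Ryll--Nardzewski fixed point theorem then yields a common fixed point $\nu\in K$. By construction $\nu$ is a positive Radon measure of total mass $\|\mu\|>0$, and right-invariance $\nu(Eg)=\nu(E)$ for every Borel $E\subset G$, $g\in G$ is exactly the fixed-point condition $R_g^{*}\nu=\nu$. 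Normalization produces the desired Haar probability measure.

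The main obstacle is ensuring the Ryll--Nardzewski fixed point is non-zero. A naive application of the theorem to $\overline{\mathrm{co}(\mu_G)}^{\,w}$ only yields $\nu$ with $\nu(G)=\mu(G)$, which can vanish when $\mu$ has zero total mass. Working instead with $|\mu|$ circumvents this, but it hinges on the non-trivial transfer of weak relative compactness from $\mu_G$ to $|\mu|_G$ via the Dieudonn\'e--Grothendieck theorem; this is the step whose details require the most care.
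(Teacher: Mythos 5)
Your proof is correct, and its core is the same as the paper's: for $(3)\Rightarrow(1)$ the paper also passes to the weakly closed convex hull of the right-translation orbit, checks invariance under the isometries $R_g^*$, and applies the Ryll--Nardzewski fixed point theorem. Where you genuinely differ is the preliminary reduction. The paper normalizes so that $\mu(G)=1$ and takes the hull of $\mu_G$ itself; this silently assumes $\mu(G)\neq 0$ and leaves the positivity of the resulting fixed point unaddressed (a non-zero $\mu\in\mw$ can have $\mu(G)=0$, and a fixed point of the paper's $K$ is a priori only a signed/complex invariant measure of total integral $1$). You instead replace $\mu$ by $|\mu|$, using that $|\mu_g|=|\mu|_g$ and that, for bounded subsets of $M(G)$, relative weak compactness of $\{\mu_g\}$ is equivalent to that of $\{|\mu|_g\}$ -- this is indeed the standard Dieudonn\'e--Grothendieck/uniform countable additivity criterion (e.g.\ Dunford--Schwartz IV.9, or Diestel--Uhl), and it is the right tool: the variation map is not weakly continuous, so the transfer cannot be done by continuity alone. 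Your hull then consists of positive measures of fixed mass $\|\mu\|>0$, so the Ryll--Nardzewski fixed point is automatically a non-zero positive right-invariant Radon measure, which after normalization is a Haar measure in the paper's sense. In short: same fixed-point strategy, but your total-variation reduction buys a cleaner (and strictly more complete) handling of normalization and positivity than the paper's own argument, at the cost of invoking one additional, but standard, weak-compactness theorem. Note also that $|\mu|$ need not lie in $\mc$ or $\mw$; this is harmless, since the fixed-point argument only uses relative weak compactness of its orbit, but it is worth a sentence in a final write-up.
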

		
		\begin{proof}
			That 1 implies 2 and 2 implies 3 is trivial. We shall prove that 3 implies 1.
			
			Suppose $\mu\in \mw$ is non-zero. Then, by scaling, we may assume $\mu(G) =1$. By assumption the weak closure of $\mu_G$, is compact so that by Krein Smulian, its closed convex hull, $K$, is also weakly compact. It is clear that for all $\nu\in K$, $\nu(G)=1$.
			
			We claim that $K$ is closed under right translations. Indeed, suppose $\nu\in K$, $h\in G$ and let $\ve>0$.  For each $T\in M(G)^*$, there exists some convex combination $\sum_{i=1}^n \al_i\mu_{g_i}$ such that $|\la{TR_h^*}{\nu -\sum_{i=1}^n \al_i\mu_{g_i}}|=|\la{T}{ \nu_{h^{-1}} -\sum_{i=1}^n \al_i(\mu_{g_i})_{h^{-1}}}| = |\la{T}{ \nu_{h^{-1}} -\sum_{i=1}^n \al_i\mu_{h^{-1}g_i}}|<\ve$. As $\ve$ was arbitrary, it follows $\nu_h\in K$. Now $G$ acts on $M(G)$ isometrically i.e. for each $g\in G$, $\mu\in M(G)$, $\mu\mapsto \mu_{g^{-1}}$ is a continuous isometry. By Ryll-Nardzewski fixed point theorem, $K$ has a fixed point of $G$. This is clearly a Haar measure on $G$.
		\end{proof}

		\bigskip
		 An issue with the previous proposition is that we are unsure apriori of what elements are contained in $\mw$. We do however have $C(G,\si)d\ld\subset \lc$ when $G$ has a Haar measure $\ld$.
		
		%
		
		\bigskip
		\bigskip
		
		We now define, $\msc = \{\mu\in M(G)\mid f\cd \mu \in C(G,\si)\text{ for each }f\in C(G)\}$. 
		Clearly $\msc\subset \mc$. If $G$ has a Haar measure, let us set $\mathcal{L}_\si(G) = \mathcal{M}_\si(G)\cap L_1(G)$.\\

		\begin{pr} \label{loc23}
			$\mathcal{M}_\si(G)$ is a closed left-ideal of $\mc$ containing the Haar measure on $G$ (if it exists). Furthermore, $\mathcal{M}_\si(G)$ and $\mathcal{L}_\si(G)$ are closed under right translations.
		\end{pr}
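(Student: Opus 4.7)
The plan is to verify the claims in order: (i) $\msc\subset\mc$, (ii) norm closure, (iii) containment of the Haar measure (when it exists), (iv) the left-ideal property, and (v) closure under right translations for both $\msc$ and $\mathcal{L}_\si(G)$. The background fact I use repeatedly is that $\si\subset\tau$ by \Cref{si}, so $C(G,\si)\subset C_b(G)$, which yields (i) immediately, and $C(G,\si)$ is uniformly closed inside $C_b(G)$ because a uniform limit of $\si$-continuous functions is $\si$-continuous.

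For (ii), the estimate
\[
{\|f\cd\mu_n - f\cd\mu\|}_\infty \leq {\|f\|}_\infty\,{\|\mu_n - \mu\|}
\]
shows that $f\cd\mu_n\to f\cd\mu$ uniformly whenever $\mu_n\to\mu$ in $M(G)$ and $f\in C(G)$, so $f\cd\mu\in C(G,\si)$. Claim (iii) is immediate: by right-invariance of the Haar measure $\ld$,
$(f\cd\ld)(y) = \int f(xy)\,d\ld(x) = \int f(x)\,d\ld(x)$
is a constant function and therefore lies in $C(G,\si)$.

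The heart of the argument is (iv). For $\mu\in\mc$ and $\nu\in\msc$, a direct unwinding of $\Box$ followed by Fubini on the pair $(\mu,\nu)$ gives
\begin{align*}
(f\cd(\mu\Box\nu))(z) &= \int f(wz)\,d(\mu\Box\nu)(w)\\
&= \iint f(xyz)\,d\mu(x)\,d\nu(y)\\
&= \int (f\cd\mu)(yz)\,d\nu(y)\\
&= ((f\cd\mu)\cd\nu)(z).
\end{align*}
Since $\mu\in\mc$ makes $f\cd\mu$ a bounded continuous function, and then $\nu\in\msc$ makes $(f\cd\mu)\cd\nu$ lie in $C(G,\si)$, we conclude $\mu\Box\nu\in\msc$; membership in $\mc$ follows from the Banach algebra structure of $\mc$ established earlier.

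For (v), the computation $(f\cd\mu_g)(y) = \int f(xgy)\,d\mu(x) = (f\cd\mu)(gy) = ((f\cd\mu)\circ L_g)(y)$ shows $f\cd\mu_g = (f\cd\mu)\circ L_g$. Because $(G,\si)$ is semitopological by \Cref{si}, the left multiplication $L_g$ is $\si$-continuous for every $g\in G$, so the composition stays in $C(G,\si)$ and hence $\mu_g\in\msc$. For $\mathcal{L}_\si(G)$, it suffices to add that $L_1(G)$ is closed under the same translation: if $d\mu = h\,d\ld$, then $d\mu_g = (h\circ R_{g^{-1}})\,d\ld$ with unchanged $L_1$-norm by right-invariance of $\ld$, so $\mathcal{L}_\si(G) = \msc\cap L_1(G)$ inherits closure under right translations. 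The only step that requires genuine care is the Fubini interchange in (iv), but this is routine because $f\cd\mu$ is bounded continuous (hence Borel) and both $\mu,\nu$ are finite complex Radon measures; no deeper obstacle arises.
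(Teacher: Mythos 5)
Your proof is correct and follows essentially the same route as the paper: the norm estimate for closedness, the observation that $f\cd\ld$ is constant, the identity $f\cd(\mu\Box\nu)=(f\cd\mu)\cd\nu$ for the left-ideal property, and the translation identity together with $(G,\si)$ being semitopological. One small correction: the step you single out as "Fubini" is not an interchange of integration order at all — the iterated integral $\int\bigl(\int f(xyz)\,d\mu(x)\bigr)d\nu(y)$ is just the definition of $\la{\mu\Box\nu}{R_zf}$, and it is good that no genuine Fubini is needed, since $(x,y)\mapsto f(xyz)$ need not be jointly measurable on $G\times G$ (precisely the issue the paper raises when motivating the definition of $\Box$), so your stated justification for a product-measure interchange would not hold; simply read the display as iterated integrals and the argument is complete.
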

		
		\begin{proof}
			Let us first show that $\msc$ is closed. Indeed, suppose $\{\mu_\al\}_{\al\in A}\subset \msc$ such that $\mu_\al \to \mu$, $\mu\in \mc$. Then, for any $f\in C(G)$, $\{x_\be\}_{\be\in B}\subset G$ such that $x_\be \to^{\si} x\in G$,
			\begin{align*} 
			&|f\cd \mu(x_\be) - f\cd \mu(x)|\\
			&= \left |\int f(yx_\be)- f(yx) d\mu(y)\right |\\
			&\leq \left |\int f(yx_\be) d(\mu-\mu_\al)(y) \right|+ \left|\int  f(yx_\be)- f(yx) d\mu_\al(y)\right|
			+ \left|\int f(yx) d(\mu-\mu_\al)(y) \right|\\
			&\leq 2{\|f\|}_\infty \|\mu-\mu_\al\| + |f\cd \mu_\al(x) - f\cd \mu_\al(x_\be) |
			\end{align*}
			Fix $\ve>0$. Then, choosing an appropriate $\al\in A$ so that the former term is less than $\ve/2$ and an appropriate $\be$ for $\al$ so that the latter term is less than $\ve/2$ (as $f\cd \mu_\al$ is in $C(G,\si)$), we have a bound of $\ve$ above which gives us our claim.
			
			To see that $\msc$ is a left ideal, suppose that $\mu\in \mc$ and $\nu\in \msc$. For any $f\in C(G)$, we have,
			$$	[\mu\od \nu] \cd f(x) = \la{\mu\od \nu}{ R_xf} = \la{\nu}{R_xf\cd \mu}
			= \la{\nu}{R_x (f\cd \mu)}
			= [f\cd \mu]\cd \nu (x)$$
			Since $f\cd \mu \in C(G)$, it follows that $[\mu\od \nu] \cd f \in C(G,\si)$. This proves our second claim. Note that if $\ld$ is a Haar measure on $G$, then for any $f\in C(G)$,  $f\cd \ld$ is always a constant and thus, trivially in $C(G,\si)$.
			
			We  show the last claim for $\msc$ and the claim for $\mathcal{L}_\si(G)$ will simply follow by absolute continuity of translations of its elements. For $\mu\in \msc$, $x, g\in G$, and $f\in C(G)$,
			one easily checks $f\cd \mu_g = R_g^{-1} [f\cd\mu]$. Now as $f\cd \mu \in C(G,\si)$ and $(G,\si)$ is a semitopological  group, it follows that $R_g^{-1} [f\cd\mu]\in C(G,\si)$ which concludes our claim.
		\end{proof}
		
		\bigskip
		
		
		In \cite{lauloy1}, it was shown that, for admissible groups, $WAP(G) = AP(G) = C(G,\si)$. Using this, we have the following result.
		
		\bigskip
		\begin{pr}\label{heheh}
			If $G$ is admissible, then $\mw\subset \msc$.
		\end{pr}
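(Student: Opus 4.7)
The plan is to reduce the inclusion $\mw \subset \msc$ to showing $f \cd \mu \in WAP(G)$ for every $\mu \in \mw$ and $f \in C(G)$; the identification $WAP(G) = C(G,\si)$ stated just above (valid because $G$ is compact and admissible) then gives $f\cd\mu \in C(G,\si)$, so $\mu \in \msc$. I would establish the weak almost periodicity of $f\cd\mu$ by two applications of Grothendieck's double limit theorem, in the same spirit as the argument used in the proof of \Cref{first}.

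On the one hand, Grothendieck characterizes $f\cd\mu \in WAP(G)$ by the equality of iterated limits $\lim_m\lim_n (f\cd\mu)(y_m g_n) = \lim_n\lim_m (f\cd\mu)(y_m g_n)$ for all sequences $\{y_m\}, \{g_n\} \subset G$ whenever both sides exist. On the other hand, the hypothesis $\mu \in \mw$ says that $\mu_G$ is relatively weakly compact in $M(G)$, which by Grothendieck is equivalent to $\lim_m\lim_n \la{\nu_m}{h_n} = \lim_n\lim_m \la{\nu_m}{h_n}$ for every sequence $\{\nu_m\} \subset \mu_G$ and every bounded sequence $\{h_n\} \subset C(G) \subset M(G)^*$, whenever both sides exist.

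The two criteria are matched by the routine identity $(f\cd\mu)(y_m g_n) = \la{\mu_{y_m g_n}}{f} = \la{\mu_{y_m}}{R_{g_n}f}$, which follows from $\la{\mu_g}{h} = \la{\mu}{R_g h}$ together with $R_{y_m g_n} = R_{y_m}R_{g_n}$. Specializing the second criterion to $\nu_m := \mu_{y_m} \in \mu_G$ and $h_n := R_{g_n}f$ (uniformly bounded by $\|f\|_\infty$) turns it into the first, which completes the proof.

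The main obstacle I anticipate is purely cosmetic: making sure the right-translation conventions on measures are consistent so that the identity $\la{\mu_g}{h} = \la{\mu}{R_g h}$ is verified correctly. All the substantive content is already packaged in the Lau--Loy identification $WAP(G) = C(G,\si)$ invoked at the outset, and the remainder is bookkeeping.
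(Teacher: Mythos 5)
Your proposal is correct and follows essentially the same route as the paper: reduce to showing $f\cd\mu\in WAP(G)$, invoke the Lau--Loy identification $WAP(G)=C(G,\si)$ for compact admissible $G$, and transfer the relative weak compactness of $\mu_G$ to $f\cd\mu$ via the identity relating translates of $f\cd\mu$ to translates of $\mu$. The only (immaterial) difference is that the paper obtains weak almost periodicity by pushing the weakly compact orbit $\mu_G$ through the weak--weak continuous map $\nu\mapsto f\cd\nu$ and taking cluster points, whereas you verify Grothendieck's double-limit criterion directly with $\nu_m=\mu_{y_m}$ and $h_n=R_{g_n}f$.
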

		
		\begin{proof}
			By Proposition 2.3 in \cite{lauloy1}, $WAP(G) = AP(G) = C(G,\si)$.
			
			Suppose $\mu\in \mw$, $f\in C(G)$, then for all $y\in G$, 
				$${f\cd\mu_{g^{-1}}}(y) =  \int f(xgy) d\mu(x)= L_g[ f\cd \mu](y) $$
				Moreover, f\\
				or every $T\in l_\infty(G)^*$, the map $\mc\to \mathbb{C}$, $\nu\mapsto \la{T}{f\cd\nu}$ is norm continuous i.e. lies in $\mc^*$. Then, since $\mu_G$ is relatively weakly compact, $\{\mu_{g^{-1}}\mid g\in G\}$ has a weak cluster point, giving a weak cluster point for $\{f\cd\mu_{g^{-1}}\mid g\in G\}=\{L_g[f\cd\mu]\mid g\in G\}\subset l_\infty(G)$.
			 It follows that $\{L_g[f\cd\mu]\mid g\in G\}\subset l_\infty(G)$ is relatively weakly compact i.e.   $f\cd\mu \in WAP(G_d)$ (the left and right weakly almost periodic functions coincide on a topological group, see \cite{berglund}). As $WAP(G) = WAP(G_d)\cap C(G)$ (\Cref{loc211}),  $f\cd\mu\in C(G,\si)$, i.e. $\mu\in \msc$.
		\end{proof}
		
		\bigskip
		
		Consider the  flow $(\rho_G, G)$, where $\rho_G$ denotes the right translation action of $G$ on itself. An equicontinuous right topological group is defined to be a right topological group $G$, satisfying the property that it's right flow is equicontinuous i.e. all the maps $\rho_G(g)$, $g\in G$ are equicontinuous. Milnes showed that $G$ is equicontinuous if and only if $AP(G) = WAP(G)= C(G)$ (see \cite{milnes2}). Thus, for such groups the previous result looks very different - and \Cref{first} is not redundant to the following result. See \Cref{hehe}.
		
		\bigskip
		\begin{thm}\label{msc}
			$G$ has a right invariant Haar measure if and only if $\msc$ contains a non-zero measure. 
		\end{thm}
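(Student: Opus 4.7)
The forward direction is immediate: if $\ld$ is a right Haar measure on $G$, then right-invariance gives $(f\cd \ld)(y) = \int f(xy)\,d\ld(x) = \ld(f)$ for every $y \in G$, so $f\cd \ld$ is the constant function $\ld(f)$, trivially in $C(G,\si)$, and hence $\ld \in \msc$.

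For the converse, I would take a non-zero positive $\mu \in \msc$ (scaled so that $\mu(G)=1$) and average it against the Haar measure of the natural topological quotient $G/N(G)$. By \Cref{ng}, $G/N(G)$ is a compact Hausdorff topological group, so it admits a unique Haar probability measure $\nu$. By \Cref{loc3} together with part~(2) of \Cref{loc2}, $C(G,\si)$ may be identified with $C(G/N(G))$ via the quotient map $\pi\colon G\to G/N(G)$; since $\mu \in \msc$, the function $f\cd \mu$ lies in $C(G,\si)$ for every $f \in C(G)$ and descends to a continuous function $\ti{f\cd \mu}$ on $G/N(G)$. I then define
$$\ld(f) := \int_{G/N(G)} \ti{f \cd \mu}([g]) \, d\nu([g])$$
and claim that this is the desired Haar measure.

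Positivity is inherited from $\mu \geq 0$ (so $f \geq 0$ forces $f\cd \mu \geq 0$), non-triviality follows from $\ld(1) = \mu(G) = 1$, and the key step of right-invariance comes from the identity $(R_h f)\cd \mu = R_h(f\cd \mu)$, which descends to $R_{\pi(h)}\ti{f\cd \mu}$; the right-invariance of $\nu$ on $G/N(G)$ then yields $\ld(R_h f) = \ld(f)$ for every $h \in G$. Applying the Riesz representation theorem produces the Haar measure on $G$. The main technical checkpoint is the descent step via \Cref{loc3} and the careful tracking of translations through the quotient; handling a general (complex or signed) non-zero $\mu \in \msc$, particularly when $\mu(G)=0$, would require an additional reduction---either via a Jordan decomposition (which is delicate because the positive part of $\mu$ need not remain in $\mc$) or by exploiting the left-ideal structure $\mc \,\square\, \msc \subset \msc$ to argue that $\msc \neq \{0\}$ forces the existence of an element with non-zero total mass.
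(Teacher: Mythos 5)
Your proposal is correct in substance, but it takes a genuinely different route from the paper. The paper's proof of the converse is a fixed-point argument: it forms the weak*-closed convex hull of the right orbit $\mu_G$, observes that this is a weak*-compact convex set invariant under the affine, weak*-continuous right-translation action, and shows that this action is distal --- distality is exactly where membership in $\msc$ enters, since $g\mapsto \mu_g(f)=f\cd\mu(g)$ is $\si$-continuous and inversion is $\si$-continuous, so convergence of the $g_\al^{-1}$ forces convergence of the translates; Namioka's fixed point theorem then yields an invariant measure, non-zero because every element of the hull has total mass $1$. You instead exploit the structure theory of $N(G)$: by \Cref{ng} the quotient $G/N(G)$ is a compact Hausdorff topological group, by \Cref{loc3} together with part~2 of \Cref{loc2} each $f\cd\mu\in C(G,\si)$ descends to $C(G/N(G))$, and you integrate against the classical Haar measure of the quotient. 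Your construction is more explicit: it needs no fixed point theorem and avoids the weak*-closure subtleties of the orbit hull, at the price of invoking the identification $C(G,\si)\cong C(G/N(G))$ (legitimate here, since the standing hypothesis of this subsection is compactness and \Cref{loc3} does not require admissibility), whereas the paper's argument runs directly from the definition of $\msc$ without ever mentioning $N(G)$. Your bookkeeping $(R_hf)\cd\mu=R_h(f\cd\mu)$ and its descent to right translation by $[h]$ on the quotient is correct, and the degenerate case $N(G)=G$ is harmless (then every $f\cd\mu$ is constant and $\mu$ itself is already right-invariant). Finally, the caveat you flag about a general non-zero $\mu$ with $\mu(G)=0$ is not a defect relative to the paper: its proof makes the very same reduction by simply scaling to $\mu(G)=1$; and when $\mu(G)\neq 0$ your functional (like the paper's fixed point) is a non-zero right-invariant measure whose total variation is then a genuine Haar measure, so the total-mass-zero case is the only point left open on either side.
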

		
		\begin{proof}
			Suppose $\mu\in \msc$ is non-zero. Then, by scaling it, we may assume that $\mu(G) = 1$. Consider $\mu_G=\{\mu_g\mid g\in G\}$. Since $g\mapsto \mu_g$ is w*-continuous, $\mu_G$ is compact and contained in $\msc$ by \Cref{loc23}. Consider $C= \cch \mu_G$, the weak*-closed convex hull of $\mu_G$. Since the set is bounded by $\|\mu\|$ and is w*-closed, by the Banach-Alaogu theorem, it is compact. Moreover $C$ is preserved under right translations $\{R_g\mid g\in G\}$, which are w*-w*-continuous affine maps on it. 
			
			 We claim that right translates by $G$ are also distal on $K$. Indeed, suppose $\lim_{\al\in A} \mu_{g_\al}= \lim_{\al\in A} \nu_{g_\al}$ for some $\mu,\nu\in K$ and some $\{g_\al\}_{\al\in A}\subset G$. Since $G$ is compact, $\{g_\al^{-1}\}_{\al\in A}$ has some subnet $\{g_\be^{-1}\}_{\be\in A}$ that converges to some $g^{-1} \in G$. Then, $\mu_{g_\be}\to \mu_g$ and $\nu_{g_{\be}}\to \nu_{g}$, so that $\mu_g=\nu_g$ and thus $\mu=\nu$.
			 
			
			By Namioka's fixed point theorem then, $G$ has a fixed point in the w*-closed convex hull of $K$. Since $\mu(G)=1$, all the elements $\nu$ of $C$ satisfy $\nu(1)=1$. It therefore follows that $G$ has a right invariant Haar measure.
		\end{proof}
		\bigskip


		\begin{pr}
			Let $G$ be a compact hausdorff right topological group. For $\mu\in \mc$, if  $R_G\mu\in \mc$, then $\mu\in \mathcal{M}_\si(G)$. 
		\end{pr}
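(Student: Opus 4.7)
The hypothesis $R_G\mu \subset \mc$ means that every translate $\mu_g = \mu\circ R_g$ lies in $\mc$, so $f\cdot \mu_g \in C_b(G) = C(G)$ for every $f\in C(G)$ and every $g\in G$. The first step of the plan is to recognize that $f\cdot \mu_g$ is nothing but a left translate of $f\cdot \mu$. Unwinding the definition of $\mu_g$ via the change of variables $\int h\,d\mu_g = \int h(xg^{-1})\,d\mu(x)$, one computes
\[
(f\cdot\mu_g)(y) \;=\; \int f(xy)\,d\mu_g(x) \;=\; \int f(xg^{-1}y)\,d\mu(x) \;=\; (f\cdot\mu)(g^{-1}y),
\]
so that $f\cdot\mu_g = L_{g^{-1}}(f\cdot\mu)$. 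Hence the hypothesis translates directly into the assertion that every left translate of $f\cdot\mu$ is continuous.

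Combined with $f\cdot\mu \in C_b(G)$, which follows from $\mu \in \mc$, this gives $f\cdot\mu \in LC(G)$ for every $f\in C(G)$. At this point the heavy lifting has already been done elsewhere in the paper: since $G$ is compact, part~4 of \Cref{loc6} applies without any admissibility assumption to yield $LC_0(G) = C_0(G,\si)$, and in the compact case both sides collapse to $LC(G) = C(G,\si)$. Thus $f\cdot\mu \in C(G,\si)$ for every $f\in C(G)$, which is exactly the definition of $\mu\in \msc$.

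So the proof is essentially a two-line deduction once the computation $f\cdot\mu_g = L_{g^{-1}}(f\cdot\mu)$ is in hand. The only step that requires any real content is the appeal to \Cref{loc6}(4), and the one minor point to verify is the compatibility of conventions in the change of variables for $\mu_g$; there is no substantive obstacle beyond that, and no admissibility hypothesis is needed because \Cref{loc6}(4) is already stated for the compact case.
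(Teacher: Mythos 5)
Your argument is correct and follows essentially the same route as the paper: both proofs rest on the identity $f\cdot\mu_x = L_{x^{-1}}(f\cdot\mu)$, deduce that every left translate of $f\cdot\mu$ is continuous, and then invoke the identification of such functions with $\sigma$-continuous ones (the paper phrases this via $C(G/N(G))$, you via part~4 of \Cref{loc6}, which in the compact case amounts to the same fact and indeed needs no admissibility). No gap; the convention issue you flag is harmless since the quantification over all $g\in G$ makes the argument independent of whether $\mu_g$ denotes $\mu(\cdot\,g)$ or its pushforward.
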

		
		\begin{proof}
			Suppose $\mu\in \mc$ is such a measure. Then, one has, $f\cd \mu_{x}$ is continuous for all $x\in G$. However, $f\cd \mu_{x}=L_{x^{-1}}f\cd\mu$. It thus follows that $f\cd\mu\in C_c(G) = C(G/N(G))$, so that $\mu_{x}$ is $N(G)$-invariant and in $\mathcal{M}_\si(G)$. 
		\end{proof}
		\bigskip
		
		Let $H$ be a closed subgroup of $G$ with a right Haar measure $\mu$. We then denote by $\ld_H$, the measure on $G$ given by $f\mapsto \int f d\ld_H = \int_{H} f|_{H} (x) d\ld(x)$. Observe that $\ld_H$ is always right $H$-invariant and is left $H$-invariant if $H$ is compact topological.  
				
				\begin{thm}\label{subgroup}
					Suppose $G$ is a $\si$-locally compact admissible or compact Hausdorff right topological group and $H\subset  \Ld(G)$ is a normal compact topological subgroup. Then, $\ld_{H}\in\mc$. In the compact case, if $H$ is $\si\si$-closed, then $\ld_H\in \msc$ and $G$ has a Haar measure.
				\end{thm}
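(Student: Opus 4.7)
There are three claims to establish: $\ld_H \in \mc$, $\ld_H \in \msc$ under the stronger hypothesis, and the existence of a Haar measure on $G$. I would address them in that order, since each builds on the previous.

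For $\ld_H \in \mc$, fix $f \in C_b(G)$; then $(f\cd \ld_H)(y) = \int_H f(xy)\,d\ld(x)$, with modulus bounded by $\|f\|_\infty \ld(H)$, so boundedness is immediate. The crux is continuity. Since $H \subset \Ld(G)$ is a topological group, left multiplication by any $x\in H$ is continuous, while right multiplication in $G$ is continuous by assumption, so the action map $H\times G \to G$, $(x,y) \mapsto xy$, is separately continuous. Namioka's \Cref{separatecont} (applied with $H$ in the role of the locally compact topological group and $G$ as the locally compact Hausdorff space) then promotes this to joint continuity. For a net $y_\al \to y$ in $G$, a standard finite-cover argument on the compact set $H$ using joint continuity of $(x,y)\mapsto f(xy)$ produces $\sup_{x \in H}|f(xy_\al) - f(xy)| \to 0$, and integrating over $H$ gives the continuity of $f\cd\ld_H$.

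For $\ld_H \in \msc$ under the hypotheses that $G$ is compact and $H$ is $\si\si$-closed, the central observation is the left $H$-invariance of $f \cd \ld_H$. Indeed, for $h \in H$, the right-invariance of $\ld$ on $H$ (valid as $H$ is a compact topological group, whose Haar measure is two-sided invariant), applied to $z \mapsto f(zy)$, gives
\begin{equation*}
(f\cd\ld_H)(hy) = \int_H f(xhy)\,d\ld(x) = \int_H f(xy)\,d\ld(x) = (f\cd\ld_H)(y).
\end{equation*}
Thus $f\cd\ld_H$ descends to a continuous function $\tilde f$ on $(G/H,\tau)$. By \Cref{sisi}, $\si\si$-closedness of $H$ forces $(G/H,\tau)$ to be a Hausdorff topological group; by \Cref{si} the topologies $\tau$ and $\si_{G/H}$ then coincide on $G/H$, and \Cref{quoti} identifies this common topology with $(G/H,\si)$. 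Consequently $\tilde f \in C(G/H,\si)$, and composing with the (by definition continuous) quotient map $\pi: (G,\si) \to (G/H,\si)$ yields $f \cd \ld_H \in C(G,\si)$, as desired.

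The final conclusion is then immediate: normalizing $\ld$ so that $\ld(H) = 1$ makes $\ld_H$ a non-zero measure in $\msc$, and \Cref{msc} directly yields the existence of a right-invariant Haar measure on $G$. The main obstacle I anticipate lies in the uniform convergence step of part one: extracting $\sup_{x\in H}|f(xy_\al)-f(xy)|\to 0$ from just joint continuity and compactness of $H$ needs care because we work with nets, though a finite subcover of neighborhoods provided by joint continuity handles it cleanly. The rest is essentially bookkeeping that knits together \Cref{sisi}, \Cref{quoti}, and \Cref{msc}.
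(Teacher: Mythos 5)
Your proposal is correct and follows essentially the same route as the paper: joint continuity of $H\times G\to G$ via Namioka's \Cref{separatecont} plus a finite-subcover (equicontinuity) argument for $\ld_H\in\mc$, left $H$-invariance of $f\cd\ld_H$ from the invariance of the Haar measure on $H$, and \Cref{msc} for the final conclusion. The only (harmless) divergence is in placing $f\cd\ld_H$ in $C(G,\si)$: you argue through \Cref{sisi} and \Cref{quoti} that $(G/H,\tau)=(G/H,\si)$ and compose with the $\si$-quotient map, whereas the paper notes $H\supset N(G)$ and invokes Lau--Loy's identification $C(G,\si)=C(G/N(G))$ — the same content, differently cited.
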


				\begin{proof}
					 As multiplication is separately continuous on $H$, by Ellis' theorem, $H$ is a compact topological group. Therefore, $H$ has a unique invariant Haar measure and we may define $\ld_H$ as above.
					
					We will show that $\ld_{H}\in \mc$. Firstly, note that  $H\times G\to G$, $(h,g) \mapsto hg$ is separately continuous.  By Namioka's theorem, it is jointly continuous so that for any $f\in C(G)$,  $H\times G\to \mathbb{C}$, $(h,g) \mapsto f(hg)$ is jointly continuous. We claim that $\{L_hf\mid h\in H\}\subset C(G)$ are equicontinuous on $G$.
					
					As $(h,g)\mapsto f(hg)$ is continuous at every $(m,g_0)\in H\times G$, there exist neighbourhoods $U_m\times V_m\subset H\times G$ of $(m,g_0)$, such that for all $(h,g)\in U_m\times V_m$, $|f(hg)-f(mg_0)|<\ve$. As $\{U_m\}_{m\in H}$ is an open cover for $H$, there exists a finite subcover $\{U_{m_i}\}_{i=1}^k$. Consider $V = \cap_{i=1}^n V_{m_i}$, also a neighbourhood of $g_0$. Then, for any $m\in H$, $m\in U_{m_i}$ for some $1\leq i\leq n$. Thus, for every $g\in V$, $|f(mg) - f(mg_0)|<\ve$, and as this holds for every $m\in H$,  $\sup_{m\in H}|f(mg)-f(mg_0)|<\ve$. It follows that, $\sup_{m\in H}|f(mg)-f(mg_0)| \to  0$ as $g\to g_0$. This proves the claim.
					
					Now as $\{L_nf\}_{n\in H}$ are equicontinuous, note that, $$f\cd \ld_H(y) = \int_{H} R_yf|_{H}(x) d\ld(x) = \int_{H} f|_{x\in H}(xy) d\ld(x)  = \int_{H} L_x f(y) d\ld(x) $$
					and $y_\al\to y$ implies $L_xf(y_\al)\to L_xf(y)$ for all $x\in H$, thus, 
					$|L_xf(y_\al)-L_xf(y)| \to 0$ for all $x\in H$, and $$ |f\cd \ld_H(y)- f\cd \ld_H(y_\al)| = \left|\int_{H} L_x f(y) d\ld(x) - \int_{H} L_x f(y_\al) d\ld(x)\right | \to 0$$  
					Thus, $f\cd \ld \in C(G)$ and $\ld \in \mc$.
					
					Now note that for $n\in H$ and $f\in C(G)$, \begin{align*}f\cd \ld(ny) = \int_{H}  R_{ny}f|_{H}(x)d\ld_{H}(x) =\int_{Hn}  R_{y}f|_{Hn}(x)d\ld_{H}(xn^{-1}) &=\int_{Hn}  R_{y}f|_{Hn}(x)d\ld_{H}(xn^{-1})\\ &= \int_{H}  R_{y}f|_{H}(x)d\ld(x)_{H}(x)\\ &= f\cd \ld(y)
					\end{align*}
					Thus, $f\cd \ld(y) \in C(G/H)$. 
					
					If $H$ is $\si\si$-closed, then, by \Cref{sisi}, $H\supset N(G)$ - thus for each $f\in C(G)$, $f\cd \ld_H \in C(G/N(G))$. As $C(G,\si)=C(G/N(G))$ by Lau and Loy, $\ld_H\in \msc$. A Haar measure then exists by \Cref{msc}.
				\end{proof}

		\bigskip
		
		\bigskip

		\begin{co}
			If $G$ is $\si$-locally compact admissible or compact, and has a Haar measure, for every compact subgroup $H\subset \Ld(G)$, it has a left-$H$ invariant right Haar measure. In particular, $G$ is compact and $\Ld(G)$ is closed, then, $G$ has a two-sided invariant Haar measure.
		\end{co}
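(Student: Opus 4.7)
The plan is to modify the given right Haar measure $\mu$ of $G$ by averaging its left translates over $H$. Since $H \subset \Ld(G)$ is compact with continuous left and right multiplication, Ellis' theorem makes it a compact topological group, and I let $\ld_H$ denote its Haar probability measure. Theorem \Cref{subgroup} places $\ld_H \in \mc$, and by Namioka's \Cref{separatecont} the action $H \times G \to G$, $(h,x)\mapsto hx$, is jointly continuous; hence $HK$ is compact for each compact $K \subset G$, and Fubini is applicable to $(h,x)\mapsto f(hx)$ against $\ld_H \otimes \mu$ for every $f \in C_c(G)$.

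With this setup in hand, I define
\[
\nu(f) := \int_H \int_G f(hx)\,d\mu(x)\,d\ld_H(h) \qquad (f\in C_c(G)),
\]
and verify that $\nu$ is the desired measure. It is a positive Radon measure, with the $\nu$-mass of a compact set $K$ bounded by $\mu(HK)<\infty$. Right $G$-invariance follows from right invariance of $\mu$ applied inside the inner integral for each fixed $h \in H$. Left $H$-invariance follows from left invariance of $\ld_H$ on the topological group $H$, via the substitution $k = h_0 h$ in the outer integral, converting $\int f(h_0 hx)\,d\ld_H(h)$ into $\int f(kx)\,d\ld_H(k)$. For non-triviality, I would pick $f \in C_c(G)$ with $f \geq 0$ and $\mu(f)>0$ and note that $h \mapsto \int f(hx)\,d\mu(x)$ is continuous on $H$ by joint continuity of the action together with dominated convergence on the compact set $HK$; since this function is strictly positive at $h = e$, it is positive on a $\ld_H$-nonnull neighborhood of $e$, so $\nu(f) > 0$.

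The ``in particular'' claim then follows by taking $H = \Ld(G)$, which is itself a compact subgroup of $\Ld(G)$ once it is closed in the compact $G$; since the paper's notion of left invariance is by definition $\Ld(G)$-invariance, the resulting right Haar measure is two-sided invariant. The main technical step I expect to need care with is the continuity of $h \mapsto \int f(hx)\,d\mu(x)$, which relies essentially on Namioka's joint continuity theorem and on $HK$ being compact in order to apply dominated convergence uniformly; everything else is then a routine bookkeeping of invariance properties.
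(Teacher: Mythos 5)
Your construction is correct and is essentially the paper's own proof: the measure $\nu(f)=\int_H\int_G f(hx)\,d\mu(x)\,d\ld_H(h)$ is exactly the convolution $\ld_H \Box \ld$ that the paper forms (after a Fubini swap justified, as you note, by Namioka's joint continuity of the $H$-action), with \Cref{subgroup} supplying $\ld_H\in\mc$. You simply carry out explicitly the invariance, support, and non-triviality checks that the paper leaves implicit, and the ``in particular'' case via $H=\Ld(G)$ is handled the same way.
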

		
		\begin{proof}
			By Ellis' theorem $H$ is a compact topological group and thus has a Haar measure. By \Cref{subgroup} then, $\ld_H\in \mc$. Given a Haar measure $\ld$ on $G$, $\ld_H \Box \ld$ then gives the desired measure.\end{proof}

		\bigskip
		\bigskip
		
		Analogously to the topological center, we define the Borel center of G as follows $$\Ld_b(G)=\{g\in G\mid x\mapsto gx \text{ is Borel}\}$$ It is clear that $\Ld(G)\subset \Ld_b(G)$. 
	
		When $G$ has a Borel multiplication map, we say $G$ is a Borel group (in this case $\Ld_b(G)=G$). If $G$ is compact and metrizable, then the condition $\Ld_b(G)=G$ coincides with $G$ being Borel (see 4.55 of \cite{aliprantis}). However, we cannot make use of this in the admissible case due to the following generalization of Namioka's result in \cite{namioka}; 
		
		\begin{pr} \label{metrizable}
			Suppose $(G,\tau)$ is a locally compact admissible Hausdorff metrizable right topological group. Then, $G$ is topological.
		\end{pr}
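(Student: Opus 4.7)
The plan is to show that every left translation $L_g\colon(G,\tau)\to(G,\tau)$ is continuous; once this is established, $(G,\tau)$ is a locally compact Hausdorff semitopological group, and Ellis's theorem forces $(G,\tau)$ to be topological.

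First, I would fix $g\in G$ and use admissibility together with metrizability (hence first countability) to pick a sequence $g_n\in\Ld(G)$ with $g_n\to g$ in $\tau$. Each $L_{g_n}$ is $\tau$-continuous. For any $x\in G$,
$$L_{g_n}(x)=g_n x=R_x(g_n)\;\xrightarrow{\tau}\;R_x(g)=gx=L_g(x),$$
where the convergence uses continuity of $R_x$, the defining feature of a right topological group. So $L_g$ is the pointwise limit of a sequence of continuous functions from the metric space $(G,\tau)$ into itself, i.e.\ $L_g$ is of Baire class one.

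Next I would invoke the classical theorem that a Baire class one function from a Baire space to a metric space has a dense $G_{\del}$ set of continuity points. Since $(G,\tau)$ is locally compact Hausdorff it is a Baire space, and $(G,\tau)$ is metrizable by hypothesis, so the set $C_g\subset G$ of points at which $L_g$ is $\tau$-continuous is comeager and in particular nonempty.

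The crucial step is to show that $C_g$ is right $G$-invariant, so that $C_g=G$. Take $y\in C_g$ and $\gamma\in G$; I claim $y\gamma\in C_g$. If $z_n\to y\gamma$ in $\tau$, then by continuity of $R_{\gamma^{-1}}$ we have $z_n\gamma^{-1}\to y$; applying continuity of $L_g$ at $y$ gives $g(z_n\gamma^{-1})\to gy$; applying continuity of $R_\gamma$ then yields $g(z_n\gamma^{-1})\gamma\to gy\gamma$. Associativity collapses the left side to $gz_n$, so $L_g(z_n)\to L_g(y\gamma)$, proving $y\gamma\in C_g$. Since $C_g$ is nonempty and closed under right translation by every element of $G$, we get $C_g\supset yG=G$, hence $C_g=G$ and $L_g$ is $\tau$-continuous.

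Since this holds for every $g\in G$, $(G,\tau)$ is a locally compact Hausdorff semitopological group, so by Ellis's theorem it is a topological group. The main substantive point is the classical Baire class one fact about continuity points; the rest is the invariance trick, which is clean once one notices that conjugating a perturbation of $y\gamma$ through $R_{\gamma^{-1}}$ and $R_\gamma$ reduces it to a perturbation of $y$, using only the continuities afforded by the right topological structure.
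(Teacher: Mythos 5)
Your argument is correct, and it is complete precisely where the paper is silent: the paper offers no written proof of \Cref{metrizable}, saying only that Namioka's compact-case argument ``generalizes without much effort,'' so your write-up effectively supplies that missing generalization. The route you take --- approximate $L_g$ pointwise by $L_{g_n}$ with $g_n\in\Ld(G)$ (metrizability gives first countability, hence a sequence), invoke the classical fact that a pointwise limit of a sequence of continuous maps from a Baire space into a metric space has a comeager set of continuity points (local compactness enters only to make $G$ Baire), and then transport a single continuity point of $L_g$ to all of $G$ via the right translations $R_{\gamma^{-1}}$ and $R_\gamma$ --- is exactly the Baire-category/generic-continuity strategy behind Namioka's metrizability theorem, with Ellis' theorem closing the argument in the same way the paper itself uses it (see the proof of part 2 of \Cref{loc6}, where what gets verified is separate continuity of $(x,y)\mapsto xy$). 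Two minor remarks: your right-invariance step needs only nets and continuity of right translations, so metrizability is not actually used there (it is genuinely used for the approximating sequence and for the Baire-class-one continuity-point theorem); and the paper's stated definition of ``semitopological'' involves $(x,y)\mapsto x^{-1}y$, but the form of Ellis' theorem your proof needs --- a locally compact Hausdorff group with separately continuous multiplication is topological --- is the standard one and is the form the paper applies elsewhere, so this is a cosmetic rather than a substantive point.
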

		
		Namioka's proof here generalizes without much effort to locally compact case.\\
		
		We attempt to use metrizability in a different way.
		\begin{pr} Let G be a $\si$-locally compact and admissible or compact, Hausdorff right topological group. If $G$ has a $\si\si$-closed, compact normal subgroup $H\subset \Ld_b(G)$ such that $H$ is metrizable and has a right invariant Haar measure, then $G$ has a Haar measure.
		\end{pr}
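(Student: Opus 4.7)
The plan is to construct a right Haar measure on $G$ by disintegration: integrate $f\in \cc(G)$ first against the Haar measure $\ld$ on $H$ and then against a Haar measure $\mu$ on the topological quotient $G/H$. Since $H$ is $\si\si$-closed, \Cref{sisi} makes $(G/H,\tau)$ a locally compact Hausdorff topological group, and the quotient map $\pi_H$ is open (right translations in $G$ are homeomorphisms, so $UH$ is open whenever $U\subset G$ is open), so $G/H$ carries a right Haar measure $\mu$.

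For $f\in \cc(G)$, I set $F(y):=\int_H f(xy)\,d\ld(x)$; the integrand is continuous in $x$ on the compact $H$ (right multiplication in $G$ is continuous), so $F$ is defined pointwise. Right $H$-invariance of $\ld$ together with normality of $H$ gives $F(yh)=F(y)$, so $F$ descends to a function $\tilde f$ on $G/H$. The candidate Haar measure is then $\ld_G(f):=\int_{G/H}\tilde f([y])\,d\mu([y])$.

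The main obstacle is Borel measurability of $\tilde f$, which is where the metrizability of $H$ is essential. The map $(x,y)\mapsto xy$ from $H\times G$ to $G$ is continuous in $x$ (right multiplication) and Borel in $y$ (since $H\subset \Ld_b(G)$), but is not jointly continuous. Using separability of $H$, I would fix for each $n$ a finite Borel partition $\{V_{n,k}\}_k$ of $H$ of diameter less than $1/n$ with representatives $x_{n,k}\in V_{n,k}$, and consider $F_n(x,y):=\sum_k \ci_{V_{n,k}}(x)\,f(x_{n,k}y)$. Each $F_n$ is jointly Borel (a finite sum of products of Borel functions of $x$ alone and $y$ alone), and for every $(x,y)$ we have $F_n(x,y)\to f(xy)$ by continuity of $x\mapsto f(xy)$ on $H$. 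Thus $(x,y)\mapsto f(xy)$ is jointly Borel and, by Fubini, so is $F$.

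It remains to verify that $\ld_G$ is a nontrivial right-invariant Radon measure. Boundedness of $\ld_G$ on $C_K(G)$ follows since $\supp(\tilde f)\subset \pi_H(K)$ is compact in $G/H$, giving a Radon measure by Riesz representation. Right-invariance reduces to $\ti{R_g f}([y])=\tilde f([yg])=\tilde f([y][g])$, legitimate because normality of $H$ makes $G/H$ a group, combined with right-invariance of $\mu$. For nontriviality, take $f\geq 0$ with $f(y_0)>0$: continuity of $x\mapsto f(xy_0)$ at $x=e$ produces a $\ld$-positive neighborhood of $e$ in $H$ on which $f(xy_0)>f(y_0)/2$, so $F(y_0)>0$. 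Running the same argument on every point of the open set $\{f>0\}\subset G$ and applying openness of $\pi_H$ yields a nonempty open subset of $G/H$ on which $\tilde f$ is positive, forcing $\ld_G(f)>0$.
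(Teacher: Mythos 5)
Your proposal is correct and follows essentially the same route as the paper's proof: quotient by the $\si\si$-closed compact normal subgroup $H$ (via \Cref{sisi}), use $H\subset\Ld_b(G)$ together with compact metrizability (hence separability) of $H$ to get joint Borel measurability of $(x,y)\mapsto f(xy)$ — your partition/simple-function argument just re-proves the Carath\'eodory joint-measurability lemma the paper cites from Aliprantis–Border — and then apply Fubini and integrate the $H$-invariant function against the Haar measures on $H$ and on the locally compact topological group $G/H$. Your extra verifications (compact support of the descended function, right-invariance via normality, and nontriviality via openness of the quotient map) merely fill in steps the paper leaves to the reader.
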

		
		\begin{proof}
			Suppose $H\subset \Ld_b(G)$ is as given. By \Cref{sisi}, $(G/H,\tau)$ is a locally compact topological group and thus has a Haar measure, say, $\ld_{G/H}$. Let $\ld_H$ be a Haar measure on $H$.
			
			Fix $f\in C(G)$. Note that the map $H\times G\to \mathbb{C}$, $(x,y)\mapsto f(xy)$ is continuous in the first variable, and Borel in the second variable Furthermore, $H$ being compact Hausdorff metrizable is separable. Thus,  the  map is jointly measurable  (see 4.51 of \cite{aliprantis}). Applying Fubini's theorem, the bounded function $y\mapsto F(y) = \int f|_H (xy)d\ld_H(x)$ is Borel measurable and clearly left $H$-invariant. We may thus integrate it with respect to $\ld_{G/H}$, and define $\ld\in M(G)$ in the standard way: $\int f(x)d\ld(x) = \int   \int f|_H (xy)d\ld_H(x) \ld_{G/H}(x)$. As $\ld_{G/H}$ is a $G$-invariant Haar measure, one easily notes that $\ld$ is a Haar measure on $G$.
		\end{proof}

		\bigskip
		\begin{re}
			The above proof still works if one assumes that $G/H$ has a Haar measure instead of $H$ being $\si\si$-closed.  For instance, if $H$ is a compact metrizable topological group, the hypothesis is met. While the hypothesis  on $H$ is strong, we note that since admissibility on $H$ is not assumed, \Cref{metrizable} does not come into play - therefore, $H$ may be non-trivially right topological.
		\end{re}
		\bigskip

		\section{Examples and open problems} \label{openprob}
		
We present some examples of $\si$-locally compact right topological groups. An excellent source of examples in the compact case is \cite{milnes1}, and our own examples have been inspired by this paper. Additionally \cite{milnes3} provides a general framework for constructing such examples via the concept of Schreier products of groups.
 		\begin{ex}
			Consider the group $\mathbb{C}_*\times \mathbb{C}^{AP}= \mathbb{C}_*\times \hat{\mathbb{C}_d}$. We equip this group with the following multiplication:
			$$(w,h)(v,g) = (wv, R_vh g)$$
			
			If $G$ is further equipped with the product topology, one obtains a $\si$-locally compact right topological group. Since $\mathbb{C}\hookrightarrow \hat{\mathbb{C_d}}$ via $z\mapsto [t\mapsto e^{2\pi izt}]$ sits inside $\ld (C^{AP})$, i.e is the set of continuous characters on $\mathbb{C}$, $\Ld(G) \cong\mathbb{C}\times \mathbb{C}$. Further one notes that $G$ admissible. 
			
			Consider the  normal subgroup $H=\{1\}\times \mathbb{C}^{AP}$. Taking the quotient of $G$ with respect to this subgroup, one obtains an algebraic isomorphism onto $\mathbb{C}\times {1}$, in fact, we get the following composition	
			$$\mathbb{C}^*\times \mathbb{C}^{AP} \to [\mathbb{C}^*\times \mathbb{C}^{AP}]/H \to \mathbb{C}$$
			$$(w,h)\mapsto [w,1] \to w$$
			
			Since the above diagram commutes, it is clear that $G/H\to \mathbb{C}$ is a continuous bijective homomorphism. By \Cref{openmap}, $G/H\cong \mathbb{C}$, whence by \Cref{sisi}, $H$ is $\si\si$-closed and thus contains $N(G)$. One notes however, that no proper subgroup of $N\subset H$ provides a topological quotient $G/N$ so that $H=N(G)$ as $N(G)=\overline{\{e\}}^{\si\si}$. The hypothesis of \Cref{haarmeas} is satisfied for $G$. Indeed, one gets a compact strong normal system of subgroups given by $G\supset N(G) \supset \{e\}$ as $G\times N(G)\to G$, $(w,h)(1,g)= (w, hg)$ is clearly separately continuous, whence jointly continuous by \Cref{separatecont}. A Haar measure on $G$ is simply given by the product of Haar measures on $\mathbb{C}$ and $\mathbb{C}^{AP}$ respectively. 
			
			Here we observe that $G'=\mathbb{T}\times \mathbb{C}^{AP}$ forms a subgroup of $G$ that is also an admissible right topological group (discussed in \cite{milnes1}) and further, $N(G')=N(G)$. An open question is determining when subgroups of a $\si$-locally compact right topological group satisfy the latter property.
		\end{ex}

		\bigskip
		
		\begin{ex} Let $G$ be the group $\mathbb{T}\times \mathbb{C}\times \mathbb{C}^{AP}=  \mathbb{T}\times \mathbb{C}\times \hat{\mathbb{C}_d}$ with multiplication given  by:
			$$ (u,x,h)(v,y,g)=(uvh(y), x+y, hg)$$
			
			Along with the product topology and the specified multiplication, $G$ becomes a $\si$-locally compact right topological group. The topological center can be verified to consist of the continuous elements of $\hat{\mathbb{C}_d}$ i.e. $\Ld(G) = \mathbb{T}\times \mathbb{C}\times\mathbb{C}$, where, as before $\mathbb{C}\subset \hat{\mathbb{C}_d}$ via $z\mapsto [t\mapsto e^{2\pi izt}]$. $G$ is hence admissible. 
			
			In this case, we may note that for $H=\mathbb{T}\times{0}\times \{1\}$, $G/H\cong \mathbb{C}\times \mathbb{C}^{AP}$ is a locally compact topological group. Further, no proper subgroup of $H$ gives a topological quotient with respect to $G$, so that by \Cref{sisi}, $H=N(G)$. Again, we have $N(G)$ is compact. A compact strong normal system is given by $G\supset N(G)\supset \{e\}$; the multiplication map  $\mathbb{T}\times \mathbb{C}\times \hat{\mathbb{C}_d} \times \mathbb{T}\times{0}\times \{1\} \to  \mathbb{T}\times \mathbb{C}\times \hat{\mathbb{C}_d}$ is clearly separately continuous as $N(G)\subset \Ld(G)$, whence jointly continuous by Namioka's theorem.
			
			Unlike the last example, $N(G)\subset \Ld(G)$ in this case. Observe that in both examples, $N(G)$ is a compact topological group.
		\end{ex}
		
		\bigskip

		The following example is from \cite{milnes1},\cite{lauloy1}.
		\begin{ex} \label{hehe}
			Let us consider the product topological space $G=\mathbb{T}\times \{\phi,1\}$, where $\phi: \mathbb{T}\to\mathbb{T}$ is a discontinuous automorphism with period 1, i.e. $\phi\circ \phi=1$. The existence of such an automorphism was proven in \cite{milnes2}. Equipped with the multiplication $(u,\ve)(v,\del)=(u\ve(v), \ve\del)$, $G$ is a compact Hausdorff right topological group.	It turns out that $\Ld(G)=N(G)= \mathbb{T}\times \{1\}$, so that $G$ is not admissible. 
			
			 Every element of $C(G)$ may be uniquely decomposed as $f_{\phi}+g_{1}$, where $f, g \in C(\mathbb{T})$.  
			 $M(G)$ may be decomposed in a similar manner. Here we use subscripts $\phi$, 1 to denote elements of $C(\mathbb{T})$, $M(\mathbb{T})$, supported on copies of $\mathbb{T}$ corresponding to $\{1\}$, $\{\phi\}$ respectively. A unique Haar measure on $G$ is given by $\ld_{\phi}/2+\ld_{1}/2$, where $\ld$ is the Haar measure on $\mathbb{T}$.  It was shown in \cite{lauloy1}, that
			\begin{align*}
			\mc&=M(\mathbb{T}\times \{1\})\oplus \mathbb{C} \ld_{\mathbb{T}\times \{\phi\}}\\
			\lgg&=\lc = L_1(\mathbb{T}\times \{1\})\oplus \mathbb{C} 1_{\mathbb{T}\times \{\phi\}}\\
			\mathcal{D}(G)&=C(\mathbb{T}\times \{1\})\oplus \mathbb{C}1_{\mathbb{T}\times \{\phi\}}
			\end{align*}
			
			This follows from the fact that for any $f\in C(\mathbb{T})$, $\mu\in M(\mathbb{T})$, $\delta,\gamma\in \{1,\phi\}$, $(v,\ve)\in G$, 
			\begin{align*}
			f_{\del}\cd \mu_{\gamma}(v,\ve)=\begin{cases}
			\int_{\mathbb{T}} f(t\gamma(v)) d\mu(t) &\text{ if } \ve= \del\circ \gamma\\
			0 &\text{ otherwise}
			\end{cases}
			\end{align*}
			Recall that an element of $\mu\in\mc$ is in $\msc$ if and only $f\cd\mu\in C(g,\si)=C(G/N(G))$. Now if $\mu_{1}+\nu_{\phi}\in \mc$, then, for every $f_1+g_\phi\in C(G)$,
			\begin{align*}
			(f_1+g_\phi)\cd (\mu_{1}+\nu_{\phi})(v,1)
			= f_1\cd\mu_1(v,1)+g_\phi\cd \nu_{\phi}(v,1)
			=\int f(tv) d\mu(t)+\int g(t\phi(v))d\nu(t)
			\end{align*}				
			so that 
			\begin{align*}
			(f_1+g_\phi)\cd (\mu_{1}+\nu_{\phi})(v,1)
			=(f_1+g_\phi)\cd (\mu_{1}+\nu_{\phi})(1,1)
			\end{align*}
			
			for all $f,g\in C(\mathbb{T})$, if and only if $\mu$, $\nu$ are right invariant on $\mathbb{T}$. By the uniqueness of the Haar measure on $\mathbb{T}$, it follows that $\msc=\mathbb{C}\ld_{\mathbb{T}\times \{1\}}\oplus \mathbb{C}\ld_{\mathbb{T}\times \{\phi\}}$. Thus $\mathcal{L}_\si(G) \subsetneq \lc\subset \mw$, and \Cref{heheh} need not hold in general. 	
			Note also that $G$ satisfies the hypothesis of \Cref{subgroup}.
		\end{ex}
		
		\bigskip
		
		Examples of locally compact admissible topological groups are plentiful via taking products of admissible compact right topological groups and locally compact topological groups. In particular, for a locally compact topological group $G$, $G\times G^{D(G)}$ is usually non-trivially right topological, locally compact and admissible. If $G$ is $\si$-compact, this group is also $\si$-locally compact. In this case, note that a strong normal system of subgroups and thus the existence of a Haar measure is guaranteed as $G^{D(G)}$ is a CHART group (\Cref{haarmeas}). However, it still remains an open question as to whether every admissible $\si$-locally compact group possesses a Haar measure or a strong normal system of subgroups.\\

		Many results on compact right topological groups do not generalize easily to locally compact right topological groups. The main one we are interested in is the following:
	 
	 	\begin{prob}
		Do $\si$-locally compact admissible right topological groups have a strong normal system of subgroups? What sufficient conditions ensure a compact system? 
		\end{prob}
	
		More specifically for normal subgroups $L$ of $G$,
		
		\begin{prob}
		If $G$ is $\si$-locally compact admissible is $N(L)\neq L$? When is $N(L)$ compact?
		\end{prob}
	
		The various techniques used to prove this result in the compact setting (\cite{namioka}, \cite{moorsnamioka}, \cite{milnespym2}) do not generalize well to our setting. Although $(L,\si)$ is locally compact by \Cref{baire}, and $N(L)$ is the intersection of all $\si$-closed neighbourhoods of $e$ in $L$, due to the $\si$-topology being non-Hausdorff, the compact sets need not be closed and the compactness of $N(L)$ is not guaranteed.\\
		
		Another problem we have is regarding uniqueness and left-invariance;
		
		\begin{prob}
			If a Haar measure on $G$ exists, is it unique? If $G$ is compact, when is the Haar measure left-$\Ld(G)$-invariant?
			\end{prob}
		
		In the compact case, an admissible right topological group has a faithful representation if and only if it is topological \cite{milnes1}. We thus raise:
		
		\begin{prob}
		Can $\si$-locally compact admissible right topological groups have faithful representations? 
		\end{prob}
		
		In \cite{lauloy2}, it was proven that $B(G)= B(G,\si)\cong B(G/N(G))$, and that certain pleasant properties hold for the same.
		\begin{prob}
		 When do these properties of $B(G)$ hold in the locally compact case?
		\end{prob}

	\Cref{metrizable} makes it uninteresting to consider metrizable locally compact groups unless we are willing to forego admissibility. Examples of non-compact metrizable right topological groups do however exist, as given by Ruppert in \cite{ruppert}. We therefore raise the following
		\begin{pr}
		Do metrizable right topological groups always possess a Haar measure? 
		\end{pr}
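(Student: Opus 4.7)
The statement is posed as an open problem, so my plan is to outline a strategy of attack rather than produce a full argument. The first move is to split on admissibility. In the admissible case, \Cref{metrizable} forces the group to be topological, so a Haar measure exists by the classical theory of locally compact topological groups. Hence the real content of the question lies in the non-admissible metrizable case, for which Ruppert's examples in \cite{ruppert} are the principal known source.

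For the non-admissible case, the most natural line is to imitate the construction in \Cref{haarmeas} by building a strong normal system of subgroups. Metrizability gives a countable base at the identity in $\tau$, which, together with the $\si$-topology machinery of \Cref{si} and \Cref{baire}, should in principle produce a countable descending chain $\{L_n\}$ of $\si$-closed normal subgroups with each $L_n/L_{n+1}$ a compact Hausdorff topological group. One would transfinitely iterate the operation $L\mapsto N(L)$, hoping that metrizability supplies enough descending chain control to terminate at $\{e\}$ and that each quotient is topological via \Cref{loc5}. As a secondary approach, one can try \Cref{msc}: exhibit a non-zero element of $\msc$ by taking $w^{*}$-cluster points of Ces\`aro-type averages of translates of a well-chosen probability measure, exploiting the sequential compactness afforded by metrizability.

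The principal obstacle, in both approaches, is the loss of admissibility. Without a dense topological center we no longer have the open quotient map of \Cref{open}, so the convenient base $\{U^{-1}U\}$ for the $\si$-topology disappears, and the proof of \Cref{baire} breaks down, removing the Baire property of $(G,\si)$ that drove \Cref{loc5}. More seriously, \Cref{moorsnam} asserting $N(L)\neq L$ is only known in the admissible compact case, and its failure in general would allow the transfinite process to stall. The decisive step is therefore either to find a replacement for admissibility, using countability and separability coming from the metric to force $N(L)\subsetneq L$ at each stage, or to identify a Ruppert-type non-admissible metrizable example where no right-invariant Radon measure can exist, supplying a negative answer. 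I expect the latter route, seeking a counterexample inside Ruppert's construction, to be the more tractable test of the question, with the positive direction contingent on a genuinely new structural input beyond what \Cref{preliminaries} and \Cref{haarsec} currently offer.
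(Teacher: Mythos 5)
This statement is not a theorem in the paper at all: it is posed (in the final section) as an open problem, and the paper offers no proof, only the surrounding remarks that \Cref{metrizable} removes the admissible locally compact metrizable case from interest and that Ruppert's construction supplies non-compact metrizable examples. So there is no proof of the paper's to compare yours against, and your proposal, being a research plan rather than an argument, neither proves nor refutes the statement. Within that caveat, your reading of the situation matches the paper's: the reduction of the admissible case via \Cref{metrizable}, and the obstacles you list — loss of the open quotient map of \Cref{open}, hence of the Baire property in \Cref{baire} and of \Cref{loc5}, and the fact that $N(L)\neq L$ (\Cref{moorsnam}) is only known for compact admissible groups — are precisely the difficulties the paper itself flags when discussing why $N(L)$ may fail to be proper or compact in the non-admissible setting.

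Two cautions if you pursue this. First, the problem as stated does not assume local compactness or $\si$-compactness, so even the admissible branch is not fully settled by \Cref{metrizable}, which is stated for locally compact admissible groups; you should either add that hypothesis explicitly or address what ``Haar measure'' means without it. Second, your averaging idea needs more than sequential compactness: $w^*$-cluster points of Ces\`aro averages of translates give invariance only if you have some fixed-point mechanism, and the two mechanisms available in the paper are exactly the ones you would be trying to avoid — relative weak compactness of the orbit (\Cref{first}, Ryll-Nardzewski) or $\si$-continuity of $g\mapsto \mu_g$ plus distality (\Cref{msc}, Namioka's fixed point theorem). Without one of these, right translation on $M(G)$ is merely a $w^*$-continuous affine action with no amenability-type input, and the cluster point need not be invariant; so the ``secondary approach'' is not genuinely independent of the hypotheses you are hoping to drop. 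The counterexample route through Ruppert's non-admissible metrizable groups is, as you say, probably the more realistic first test.
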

	
		We hope to resolve some of these questions in future work.

		\section*{Acknowledgements}
		
		This paper comprises part of the author's PhD thesis. The author would like to thank her PhD supervisor, Dr. Anthony To-Ming Lau for his continued support and guidance.
		
\bibliography{Thesisfinale}
\bibliographystyle{abbrv}
\nocite{*}
	\end{document}